\documentclass[12pt,leqno,oneside]{amsart}
\usepackage{mathrsfs,dsfont}
\usepackage{amsmath,amstext,amsthm,amssymb,amscd}
\usepackage{charter}
\usepackage{typearea}
\usepackage{pdfsync}
\usepackage{mathtools}

\usepackage
{hyperref}
\usepackage[width=6.4in,height=8.5in]{geometry}

\pagestyle{plain} 
\pagenumbering{arabic}
\numberwithin{equation}{section}

\newtheorem{Theorem}{Theorem}[section]
\newtheorem{Proposition}[Theorem]{Proposition}
\newtheorem{Lemma}[Theorem]{Lemma}
\newtheorem{Corollary}[Theorem]{Corollary}
\theoremstyle{definition}
\newtheorem{Definition}[Theorem]{Definition}

\newtheorem{Remark}[Theorem]{Remark}

\newtheorem{Problem}[Theorem]{Problem}

\newcommand{\db}{\overline\partial}

\newcommand{\wi}{\widetilde}
\DeclareMathOperator{\ric}{Ric}
\DeclareMathOperator{\codim}{codim}

\DeclareMathOperator{\dist}{dist}
\DeclareMathOperator{\Bs}{Bs}

\DeclareMathOperator{\ord}{ord}
\newcommand{\cali}[1]{\mathscr{#1}}
\newcommand{\cO}{\cali{O}} 

\newcommand{\cM}{\cali{M}}

\newcommand{\cC}{\cali{C}}

\newcommand{\cD}{\cali{D}}

\newcommand{\field}[1]{\mathbb{#1}}
\newcommand{\Z}{\field{Z}}
\newcommand{\R}{\field{R}}
\newcommand{\C}{\field{C}}
\newcommand{\N}{\field{N}}

\newcommand{\Q}{\field{Q}}
\newcommand{\B}{\field{B}}
\renewcommand{\P}{\field{P}}

\newcommand{\X}{\field{X}}

\newcommand{\reg}{\mathrm{reg} }
\newcommand{\sing}{\mathrm{sing} }
\newcommand{\FS}{{\rm FS}}
\newcommand{\eq}{\mathrm{eq} }
\newcommand{\req}{\mathrm{req} }

\newcommand{\PSH}{{\rm PSH}}
\newcommand{\Amp}{{\rm Amp}}
\newcommand{\NAmp}{{\rm NAmp}}

\newcommand{\Leb}{\mathop{\mathrm{Leb}}\nolimits}
\newcommand{\exph}{{\rm exph}}
\newcommand{\const}{\mathop{\mathrm{const}}\nolimits}

\newcommand{\ddbar}{{\partial\overline\partial}}

\newcommand{\comment}[1]{}
\newcommand{\ke}{\nobreak\hspace{.06em plus .03em}}
\newcommand*{\LargerCdot}{\raisebox{-0.25ex}
{\scalebox{1.5}{$\cdot$}}}

\hyphenation{Ta-ka-ya-ma}
\hyphenation{Her-mi-tian}
\hyphenation{de-fi-ni-tion} 
 
\begin{document}

\title[Holomorphic sections vanishing along subvarieties]
{Holomorphic sections of line bundles vanishing along subvarieties}

\author{Dan Coman}
\thanks{D.\ Coman is partially supported by the NSF Grant DMS-1700011}
\address{Department of Mathematics, Syracuse University, 
Syracuse, NY 13244-1150, USA}\email{dcoman@syr.edu}
\author{George Marinescu}
\address{Universit{\"a}t zu K{\"o}ln, Mathematisches Institut, Weyertal 86-90, 50931 K{\"o}ln, 
Deutschland    \newline
    \mbox{\quad}\,Institute of Mathematics `Simion Stoilow', Romanian Academy, Bucharest, Romania}
\email{gmarines@math.uni-koeln.de}
\thanks{G.\ Marinescu is partially supported by DFG funded project SFB/TRR 191}
\author{Vi{\^e}t-Anh Nguy{\^e}n}
\address{Universit\'e de Lille 1, 
Laboratoire de math\'ematiques Paul Painlev\'e, 
CNRS U.M.R. 8524,  \newline
    \mbox{\quad}\,59655 Villeneuve d'Ascq Cedex, 
France}
\email{Viet-Anh.Nguyen@math.univ-lille1.fr}
\thanks{V.-A. Nguyen is partially supported by 
Vietnam Institute for Advanced Study in Mathematics (VIASM)}
\thanks{Funded through the Institutional Strategy of the 
University of Cologne within the German Excellence Initiative}
\subjclass[2010]{Primary 32L10; Secondary 32A60, 32U40, 32W20, 53C55, 81Q50}
\keywords{(Partial) Bergman kernel function, singular Hermitian 
metric, big line bundle, big  cohomology class,
   holomorphic sections}
\date{August 29, 2019}

\pagestyle{myheadings}

\begin{abstract}
Let $X$ be a compact normal complex space of dimension $n$, 
and $L$ be a holomorphic line bundle on $X$. 
Suppose $\Sigma=(\Sigma_1,\ldots,\Sigma_\ell)$ is an $\ell$-tuple 
of distinct irreducible proper analytic subsets of $X$, 
$\tau=(\tau_1,\ldots,\tau_\ell)$ is an $\ell$-tuple of positive real numbers, 
and consider the space $H^0_0 (X, L^p)$ of global holomorphic sections of 
$L^p:=L^{\otimes p}$ that vanish to order at least $\tau_{j}p$ along 
$\Sigma_{j}$, $1\leq j\leq\ell$. We find necessary and sufficient conditions 
which ensure that $\dim H^0_0(X,L^p)\sim p^n$, analogous to
Ji-Shiffman's criterion for big line bundles. We give estimates of the 
partial Bergman kernel, investigate the
convergence of the Fubini-Study currents and their potentials, and the  
equilibrium distribution of normalized currents of integration along zero divisors 
of random holomorphic sections in $H^0_0 (X, L^p)$ as $p\to\infty$.
Regularity results for the equilibrium envelope are also included.
\end{abstract}

\maketitle
\tableofcontents

\section{Introduction} \label{introduction}

Let $(X,L)$ be a polarized projective manifold of dimension $n$,
let $\Sigma$ be a complex hypersurface of $X$, and let
$\tau$ be a positive real number.
The study of holomorphic sections of $L^p$ which vanish
to order at least $p\tau$ along $\Sigma$ received much attention in the past few years.
The density function of this space, called partial Bergman kernel, 
appears in a natural  way, in several contexts, especially in K\"ahler
geometry and pluripotential theory (linked to the notion
of extremal quasiplurisubharmonic functions with poles along 
$\Sigma$) see e.{\ke}g.\ \cite{Ber07,RS13, PS14, RWN14,RWN17,CM17,ZeZh}.
One of the motivations is the notion of slope of the hypersurface $\Sigma$ in the sense of
Ross-Thomas \cite{RT06} and its relation to the existence of a constant scalar curvature
K\"ahler metric in $c_1(L)$.

In this paper we consider a compact normal complex space $X$ of dimension $n$,
a holomorphic line bundle $L$ over $X$
and the space $H^0_0(X,L^p)$ of holomorphic sections
vanishing to order at least $p\tau_j$ along irreducible proper
analytic subsets $\Sigma_j\subset X$, $j=1,\ldots,\ell$.
We study algebraic and analytic objects associated to $H^0_0(X,L^p)$,
especially the partial Bergman kernels, the Fubini-Study currents and their potentials.

We first give an analytic characterization for $H^0_0(X,L^p)$ to be big,
which means by definition that $\dim H^0_0(X,L^p)\sim p^n$, $p\to\infty$.
This criterion, stated in terms of singular Hermitian metrics with
positive curvature current in the spirit of the Ji-Shiffman/Bonavero/Ta-kayama criterion
for big line bundles, involves a desingularization of $X$ where
the $\Sigma_j$'s become divisors.

Next we prove that under natural hypotheses the Fubini-Study
currents associated to $H^0_0(X,L^p)$ and their potentials converge
as $p\to\infty$. The limit of the sequence of Fubini-Study potentials is the 
push-forward $\varphi_\eq$ of a certain equilibrium envelope   
with logarithmic poles defined on a desingularization.
The sequence of the Fubini-Study currents
converge to the corresponding equilibrium current $T_\eq$.
These are analogues of Tian's theorem \cite{Ti90} which applies for smooth
Hermitian metrics with positive curvature. In the context
of singular Hermitian metrics they were introduced in \cite{CM15,CM13}.
The convergence of the Fubini-Study currents/potentials is based
on the asymptotics of the logarithm of the partial Bergman kernel
(see also \cite{CM15,CM13,CMM17,DMM16} for results of this type concerning
the full Bergman kernel).

Returning to the case of a polarized projective manifold $(X,L)$,
Shiffman-Zelditch \cite{SZ99} showed how Tian's theorem
can be applied to obtain the distribution of the zeros of random
holomorphic sections of $H^0(X,L^p)$.
Dinh-Sibony \cite{DS06} used meromorphic transforms 
to obtain an estimate of the speed of convergence of zeros
to the equilibrium distribution (see also \cite{DMS} for the non-compact setting).
Random polynomials or more generally holomorphic sections 
in high tensor powers of a holomorphic line
bundle and the distribution of
their zeros represent a classical subject in analysis 
(see e.{\ke}g.\ \cite{BlPo31,ET50,Ham56,Kac49}).
The result of \cite{SZ99} was generalized for singular metrics
whose curvature is a K\"ahler current in \cite{CM15} and for sequences 
of line bundles over normal complex spaces in \cite{CMM17} (see
also \cite{CM13, DMM16}).
We show here that the equilibrium distribution of random zeros of sections
from $H^0_0(X,L^p)$ is the equilibrium current $T_\eq$ and give an
estimate of the convergence speed.

%

\subsection{Background and notation}
Let $X$ be a compact normal complex space of dimension $n$. 
If $L$ is a holomorphic line bundle on $X$, we let $L^p := L^{\otimes p}$ 
and denote by $H^0(X, L^p)$ the space of global holomorphic sections of $L^p$. 
Since $X$ is compact, the spaces $H^0(X, L^p)$ are finite dimensional. 
Given $S\in H^0(X, L^p)$, we denote by $[S = 0]$ the current of integration 
(with multiplicities) over the analytic hypersurface $\{S=0\}\subset  X$. 
If $h$ is a singular Hermitian metric on $L$ we denote by $c_1(L,h)$ 
its {\it curvature current}. 

Suppose now that $X$ is a compact complex manifold. 
For a closed current $T$ of bidegree $(1,1)$ on $X$, let $\{T\}$ 
denote its class in the Dolbeault cohomology group $H^{1,1}(X)$. 
If $L$ is a holomorphic line bundle over $X$ we denote by $c_1(L)$ 
its first Chern class in $H^{1,1}(X,\Z)$. We have that $\{c_1(L,h)\}=c_1(L)$, 
for any singular Hermitian metric $h$ on $L$. 
The line bundle $L$ is called \emph{big} if its Kodaira-Iitaka dimension 
equals the dimension of $X$ (see \cite[Definition 2.2.5]{MM07}). 
One has that $L$ is big if and only if 
$\limsup_{p\to\infty} p^{-n}\dim  H^0 (X, L^p )>0$ (see \cite[Theorem 2.2.7]{MM07}). 
By the Ji-Shiffman/Bonavero/Takayama criterion \cite[Theorem 2.3.30]{MM07}, 
$L$ is big if and only if it admits a strictly positively curved singular Hermitian metric $h$ 
(see Section \ref{SS:Preliminaries} for definitions). 

Throughout the article, we denote by $\lfloor r\rfloor$ the greatest integer $\leq r\in\R$, and we let $d^c:= \frac{1}{2\pi i}\,(\partial -\overline\partial)$, so $dd^c=\frac{i}{\pi}\,\partial\overline\partial$.

\subsection{Sections vanishing along subvarieties}\label{SS:big}

\par We consider in this paper the following general setting:

\smallskip

(A) $X$ is a compact, irreducible, normal (reduced) complex space of dimension $n$, 
$X_\reg$ denotes the set of regular points of $X$, and $X_\sing$ denotes the set 
of singular points of $X$.

\smallskip

(B) $L$ is a holomorphic line bundle on $X$.

\smallskip

(C) $\Sigma=(\Sigma_1,\ldots,\Sigma_\ell)$ is an $\ell$-tuple of distinct irreducible proper analytic subsets of $X$ such that $\Sigma_j\not\subset X_\sing$, for every $j\in\{1,\ldots,\ell\}$.

\smallskip

(D) $\tau=(\tau_1,\ldots,\tau_\ell)$ is an $\ell$-tuple of positive real numbers such that $\tau_j>\tau_k$, for every $j,k\in\{1,\ldots,\ell\}$ with $\Sigma_j\subset\Sigma_k$.

\smallskip

For $p\geq1$ let $H^0_0 (X, L^p)$ be the space of sections $S\in H^0(X,L^p)$ that vanish to order at least $\tau_{j}p$ along $\Sigma_{j}$, for all $1\leq j\leq\ell$. More precisely, let 
\begin{equation}\label{e:tjp}
t_{j,p}=\begin{cases}\tau_jp  &\text{if $\tau_jp\in\N$} \\
\lfloor\tau_jp\rfloor+1 &\text{if $\tau_jp\not\in\N$}\end{cases}\;,\;\;1\leq j\leq\ell\,,\,\;p\geq1\,.
\end{equation}
Then 
\begin{equation}\label{e:H00}
H^0_0(X, L^p)=H^0_0(X,L^p,\Sigma,\tau):=\{S\in H^0(X, L^p):\,\ord(S,\Sigma_j)\geq t_{j,p},\;1\leq j\leq\ell\}\,,
\end{equation}
where $\ord(S,Z)$ denotes the vanishing order of $S$ along 
an irreducible analytic subset $Z$ of $X$, $Z\not\subset X_\sing$. 
 
 \begin{Definition}\label{D:big} 
 We say that  the triplet $(L,\Sigma,\tau )$ is {\it big} if 
 $\displaystyle\limsup_{p\to\infty} \frac{\dim H^0_0 (X, L^p)}{p^n}>0$. 
 \end{Definition} 
The first problem we address in this article is the following:

%
%

\begin{Problem}\label{prob1}
Characterize the big triplets $(L,\Sigma,\tau)$.
\end{Problem}

We first give an answer to Problem \ref{prob1} in the case when $X$ is a complex manifold 
and $\Sigma_j$ are irreducible hypersurfaces in $X$. 
Namely, we have the following analog of the Ji-Shiffman's
criterion for big line bundles \cite[Theorem 4.6]{JS93}, see
also \cite{Bon93}, \cite[Theorem 2.3.30]{MM07} :

\begin{Theorem}\label{T:main0} 
Let $X,L,\Sigma,\tau$ verify assumptions (A)-(D), 
and suppose that $X$ is smooth and $\dim\Sigma_j=n-1$ for all $j=1,\ldots,\ell$. 
The following are equivalent: 

(i) $(L,\Sigma,\tau)$ is big;

(ii) There exists a singular Hermitian metric $h$ on $L$ such that 
$c_1(L,h)-\sum_{j=1}^\ell\tau_j[\Sigma_j]$ is a K\"ahler current on $X$;

(iii) There exist $p_0\in\N$ and $c>0$ such that $\dim H^0_0(X,L^p)\geq cp^n$ for all $p\geq p_0$.
\end{Theorem}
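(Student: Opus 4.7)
\emph{Proof sketch.} The implication (iii) $\Rightarrow$ (i) is immediate from Definition 1.2.1. The plan for the remaining implications is to reduce to the classical Ji--Shiffman/Bonavero/Takayama criterion for bigness of a line bundle, applied to the $\Q$-line bundle $L - \sum_j \tau_j \Sigma_j$. The rational case $\tau_j \in \Q$ is treated directly; the general case follows by a metric perturbation argument. Fix smooth Hermitian metrics $h_0$ on $L$ and $h_j$ on $\cO(\Sigma_j)$, and let $s_j \in H^0(X, \cO(\Sigma_j))$ be canonical sections with $\{s_j=0\}=\Sigma_j$.

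\textbf{Rational case.} Suppose $\tau_j = a_j/q$ with common denominator $q \in \N$ and set $A := L^q \otimes \bigotimes_j \cO(\Sigma_j)^{-a_j}$. Multiplication by $\prod_j s_j^{a_j k}$ gives a natural isomorphism $H^0(X, A^k) \cong H^0_0(X, L^{qk})$, which translates bigness of $A$ into (i) for $(L,\Sigma,\tau)$. On the metric side, given a singular Hermitian metric $h$ on $L$, the metric $H := (h^q \otimes \bigotimes_j h_j^{-a_j}) \cdot \prod_j |s_j|^{2a_j}_{h_j}$ on $A$ satisfies $c_1(A, H) = q\,\bigl(c_1(L,h) - \sum_j \tau_j [\Sigma_j]\bigr)$ by Poincar\'e--Lelong, yielding the equivalence between (ii) for $(L,\Sigma,\tau)$ and the existence of a strictly positively curved singular Hermitian metric on $A$. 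The classical criterion applied to $A$ then gives (i) $\Leftrightarrow$ (ii) $\Leftrightarrow$ (iii) along the subsequence $p \in q\N$. To obtain (iii) for all $p$ large, write $p = qk + r$ with $0 \leq r < q$ and note that $H^0_0(X, L^{qk+r}) \cong H^0(X, A^k \otimes L^r \otimes \bigotimes_j \cO(-t_{j,r}\Sigma_j))$ via the same multiplication trick; since $A$ is big, twisting $A^k$ by any fixed line bundle preserves the $k^n$ growth of global sections.

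\textbf{Perturbation and irrational case.} The key observation is that if $h$ on $L$ satisfies $c_1(L,h)-\sum_j \tau_j[\Sigma_j] \geq \epsilon\omega$ and $\beta_j\in\R$ with $|\beta_j|$ small, then $h' := h\cdot\prod_j|s_j|^{-2\beta_j}_{h_j}$ is a singular Hermitian metric on $L$ with $c_1(L, h') - \sum_j(\tau_j+\beta_j)[\Sigma_j] = c_1(L,h) - \sum_j\tau_j[\Sigma_j] - \sum_j\beta_j\, c_1(\cO(\Sigma_j), h_j) \geq (\epsilon/2)\omega$, provided $\sum_j|\beta_j|$ is small compared to $\epsilon$ (using that $c_1(\cO(\Sigma_j), h_j)$ is a fixed smooth form). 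For (ii)$\Rightarrow$(iii) with general $\tau$: apply this with rational $\tau_j'=\tau_j+\beta_j > \tau_j$ to transfer (ii) to a nearby rational $\tau'$; the rational case yields (iii) at $\tau'$; since $H^0_0(X,L^p,\tau')\subset H^0_0(X,L^p,\tau)$, (iii) also holds at $\tau$. For (i)$\Rightarrow$(ii): take rational $\tau' < \tau$ close to $\tau$; the inclusion $H^0_0(X,L^p,\tau)\subset H^0_0(X,L^p,\tau')$ gives (i) at $\tau'$; the rational case yields (ii) at $\tau'$; and the perturbation with $\beta_j=\tau_j-\tau'_j$ restores (ii) at $\tau$.

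\textbf{Main obstacle.} The subtle point in the last step is preserving the K\"ahler current slack under the rational-to-irrational perturbation. The slack $\epsilon' = \epsilon'(\tau')$ furnished by the rational case must remain bounded below as $\tau' \to \tau$ through rationals, so that the subsequent perturbation by $|\tau - \tau'|$ does not consume all of it. The hypothesis $\limsup_p p^{-n}\dim H^0_0(X,L^p) > 0$ provides a uniform lower bound on the volume of the $\Q$-line bundles $A'$ associated to approximations $\tau'$; together with the $1/q$ rescaling in the metric correspondence above, this forces $\epsilon'(\tau')$ to be bounded below uniformly in $\tau'$, which suffices to close the argument.
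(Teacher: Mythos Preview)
Your overall strategy---reduce to the Ji--Shiffman criterion for the $\Q$-line bundle $A=L^q\otimes\bigotimes_j\cO(-a_j\Sigma_j)$ and then perturb---is sound and is different from the paper's route. The paper proves (i)$\Rightarrow$(ii) by directly constructing the metric: after reducing to the projective case via Moishezon's theorem, it picks a single $p$ along the big subsequence, peels off an ample divisor to write $L^p=\cO(A)\otimes\cO(D_p)\otimes\bigotimes_j\cO(t_{j,p}\Sigma_j)$ with $D_p$ effective, and takes the $p$-th root of the obvious product metric; the key point is that $t_{j,p}/p\geq\tau_j$ automatically. For (ii)$\Rightarrow$(iii) the paper applies Demailly regularization and Bonavero's singular Morse inequalities rather than the Ji--Shiffman package.

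There is, however, a genuine gap in your (i)$\Rightarrow$(ii) step for irrational $\tau$. You approximate by rational $\tau'<\tau$, obtain (ii) at $\tau'$ with slack $\epsilon'=\epsilon'(\tau')$, and then perturb back up to $\tau$. This requires $\epsilon'(\tau')$ to stay bounded away from zero as $\tau'\to\tau$, and your ``main obstacle'' paragraph does not establish this. A uniform lower bound on $\operatorname{vol}(A')/(q')^n$ does \emph{not} force the K\"ahler-current slack $\epsilon'$ to be uniformly positive: the Ji--Shiffman criterion is purely existential and gives no quantitative $\epsilon$ in terms of volume, and after dividing by $q'$ the situation is worse, not better. (One could try to salvage this via Boucksom's continuity of volume for real $(1,1)$-classes and the characterization of bigness by positive volume, but that is a different argument from the one you wrote, and it still leaves the subsequence issue: the hypothesis (i) is a $\limsup$ over all $p$, while $\operatorname{vol}(A')$ sees only multiples of $q'$.)

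The fix is simple and avoids the obstacle entirely: approximate from \emph{above} rather than below. Condition (i) is open in $\tau$: if $\dim H^0_0(X,L^{p_k},\tau)\geq c\,p_k^n$, then iterating the short exact sequence $0\to\cO(-\Sigma_j)\otimes E\to E\to E|_{\Sigma_j}\to0$ together with Siegel's lemma on $\Sigma_j$ shows that increasing each $\tau_j$ by $\delta$ costs at most $C\delta\,p^n+O(p^{n-1})$ in dimension, so (i) persists at some rational $\tau''>\tau$. Your rational case then gives (ii) at $\tau''$, and since $\tau''_j>\tau_j$ the same metric $h''$ already yields
\[
c_1(L,h'')-\sum_j\tau_j[\Sigma_j]=\Big(c_1(L,h'')-\sum_j\tau''_j[\Sigma_j]\Big)+\sum_j(\tau''_j-\tau_j)[\Sigma_j]\geq\epsilon''\omega,
\]
so (ii) holds at $\tau$ with no perturbation needed. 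This is in the spirit of the paper's own rational approximation $r_{j,k}\searrow\tau_j$ in its proof of (iii)$\Rightarrow$(iv).
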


Here $[\Sigma_j]$ denotes the current of integration along $\Sigma_j$. 
Recall that a K\"ahler current is a positive closed current $T$ of bidegree $(1,1)$ 
such that $T\geq\varepsilon\omega$ for some number $\varepsilon>0$ 
and some Hermitian form $\omega$ on $X$. 
To find a solution to Problem \ref{prob1} in the general case, 
we first use Hironaka's theorem on resolution of singularities to prove the following result.

\begin{Proposition}\label{P:divisorization} 
Let $X$ and $\Sigma$ verify assumptions (A) and (C). 
Then there exist a compact complex manifold $\wi X$ of dimension $n$ 
and a surjective holomorphic map $\pi:\wi X\to X$, 
given as the composition of finitely many blow-ups with smooth center, 
with the following properties:

(i) There exists an analytic subset $Y$ of $X$ such that 
$\dim Y\leq n-2$, $Y\subset X_\sing\cup\bigcup_{j=1}^\ell\Sigma_j$, 
$X_\sing\subset Y$, $\Sigma_j\subset Y$ if $\dim\Sigma_j\leq n-2$, 
$E=\pi^{-1}(Y)$ is a divisor in $\wi X$ that has only normal crossings, 
and $\pi:\wi X\setminus E\to X\setminus Y$ is a biholomorphism. 

(ii) There exist (connected) smooth complex hypersurfaces 
$\wi\Sigma_1,\ldots,\wi\Sigma_\ell$ in $\wi X$, which have only normal crossings, 
such that $\pi(\wi\Sigma_j)=\Sigma_j$. Moreover, if $\dim\Sigma_j=n-1$ then 
$\wi\Sigma_j$ is the final strict transform of $\Sigma_j$, 
and if $\dim\Sigma_j\leq n-2$ then $\wi\Sigma_j$ is an irreducible component of $E$.

(iii) If $F\to X$ is a holomorphic line bundle and 
$S\in H^0(X,F)$ then $\ord(S,\Sigma_j)=\ord(\pi^\star S,\wi\Sigma_j)$, 
for all $j=1,\ldots,\ell$.
\end{Proposition}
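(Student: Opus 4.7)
\textbf{Proof plan for Proposition \ref{P:divisorization}.} The plan is to invoke Hironaka's theorem on embedded resolution of singularities in the complex analytic category (see e.g.\ Hironaka, or Bierstone--Milman/W{\l}odarczyk in the algorithmic form) and then simply identify and name the objects produced. Let $Z := X_\sing\cup\bigcup_{j=1}^\ell\Sigma_j$, an analytic subset of $X$. Embedded resolution applied to the pair $(X,Z)$ produces a proper holomorphic map $\pi:\wi X\to X$ realized as a finite composition of blow-ups with smooth centers such that $\wi X$ is smooth, the total transform $\pi^{-1}(Z)$ is a simple normal crossings (SNC) divisor, each hypersurface component of $Z$ (i.e.\ each $\Sigma_j$ with $\dim\Sigma_j=n-1$) has smooth strict transform, and every blow-up center sits above a well-controlled codimension $\geq 2$ subset of $X$.

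Next I would define $Y$ to be the union of $X_\sing$, of all $\Sigma_j$ with $\dim\Sigma_j\leq n-2$, of the singular loci $\Sing\Sigma_j$ for those $j$ with $\dim\Sigma_j=n-1$, and of the pairwise intersections $\Sigma_j\cap\Sigma_k$ for indices $j\neq k$ with both $\Sigma_j,\Sigma_k$ hypersurfaces. Each piece is of codimension $\geq 2$ in $X$, so $\dim Y\leq n-2$; the inclusions $X_\sing\subset Y\subset X_\sing\cup\bigcup_j\Sigma_j$ and the requirement $\Sigma_j\subset Y$ whenever $\dim\Sigma_j\leq n-2$ hold by construction. Since $Y$ contains the images of all blow-up centers, $\pi$ restricts to a biholomorphism $\wi X\setminus\pi^{-1}(Y)\to X\setminus Y$. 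Setting $E=\pi^{-1}(Y)$, this is a union of irreducible components of the SNC divisor $\pi^{-1}(Z)$, hence itself a divisor with only normal crossings. This proves (i).

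For (ii), for each $j$ with $\dim\Sigma_j=n-1$ I set $\wi\Sigma_j$ to be the final strict transform of $\Sigma_j$, which is smooth by the resolution. For each $j$ with $\dim\Sigma_j\leq n-2$, the reduced preimage $\pi^{-1}(\Sigma_j)$ is a union of irreducible components of the exceptional locus $E$, and at least one such component maps surjectively onto $\Sigma_j$ (namely, the exceptional divisor created when first blowing up along the strict transform of $\Sigma_j$, or a suitable proper transform thereof); I choose one such component and call it $\wi\Sigma_j$. All chosen $\wi\Sigma_j$ are components of the SNC divisor $\pi^{-1}(Z)$, so they intersect with only normal crossings.

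Finally, (iii) is a generic local computation. For $\dim\Sigma_j=n-1$, a generic point of $\wi\Sigma_j$ maps to a smooth point of $\Sigma_j$ lying outside $Y$, where $\pi$ is a biholomorphism, so $\ord(\pi^\star S,\wi\Sigma_j)=\ord(S,\Sigma_j)$ is immediate. For $\dim\Sigma_j\leq n-2$, I would work in local coordinates $(z_1,\dots,z_n)$ near a generic (smooth) point of $\Sigma_j$ in which $\Sigma_j=\{z_1=\cdots=z_c=0\}$ with $c=\codim\Sigma_j$; writing $S=\sum_{|\alpha|\geq m}f_\alpha(z)\,z^\alpha$ with $m=\ord(S,\Sigma_j)$ and the $|\alpha|=m$ part not identically zero on $\Sigma_j$, then in the blow-up chart $z_1=y_1,\;z_i=y_1w_i$ ($i\geq 2$) a direct calculation gives $\pi^\star S=y_1^m g$ with $g$ holomorphic and generically non-vanishing on the exceptional divisor. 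Subsequent blow-ups have centers of codimension $\geq 1$ inside $\wi\Sigma_j$, hence do not alter the generic vanishing order of the proper transform. The main obstacle is nothing conceptual but rather bookkeeping: one must trust (or cite) that Hironaka's theorem in the analytic category simultaneously delivers smoothness of $\wi X$, the SNC property of $\pi^{-1}(Z)$, smoothness of the strict transforms of the hypersurface $\Sigma_j$, and the localization of centers above $Y$; once this package is available, all of (i)--(iii) follow as above.
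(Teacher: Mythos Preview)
Your proposal contains a genuine gap in the verification of (iii) for the case $\dim\Sigma_j\leq n-2$, and a smaller one at the very outset.

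First the minor point: you apply embedded resolution to the pair $(X,Z)$ with $X$ a normal complex \emph{space}. Embedded resolution (Bierstone--Milman, W{\l}odarczyk) is stated for a subvariety of a \emph{smooth} ambient variety. The paper avoids this by first desingularizing $X$ via Hironaka's Theorem~\ref{T:H1}, passing to a smooth $\widehat X$, and only then invoking embedded resolution on $\widehat X$. This is easily fixed in your argument too, but it should be said.

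The real issue is your choice of $\wi\Sigma_j$ when $\dim\Sigma_j\leq n-2$. You assert that among the exceptional components over $\Sigma_j$ there is one ``created when first blowing up along the strict transform of $\Sigma_j$''. But a generic embedded (or log) resolution algorithm need not ever blow up along the strict transform of $\Sigma_j$ itself; its centers are dictated by an invariant and can be quite different subvarieties. For an arbitrary exceptional component $D\subset\pi^{-1}(\Sigma_j)$, property (iii) can fail: e.g.\ if $\Sigma_j=\{z_1=z_2=0\}\subset\C^3$ and one blows up the \emph{origin}, then $z_1$ and $z_1z_3$ both have order $1$ along $\Sigma_j$, yet their pullbacks vanish to orders $1$ and $2$ respectively along the exceptional $\P^2$. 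So not every exceptional component over $\Sigma_j$ records $\ord(S,\Sigma_j)$ faithfully, and your argument does not exhibit one that does.

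The paper handles this by an extra, explicit step (packaged as Theorem~\ref{T:H3}): after obtaining smooth pairwise-disjoint strict transforms $A_j'$ of the $\Sigma_j$, one additionally blows up each $A_j'$ of codimension $\geq2$. The exceptional divisor of \emph{that} blow-up is then taken as $\wi\Sigma_j$, and your local computation in blow-up coordinates applies verbatim to give (iii). The paper further processes the $\Sigma_j$'s in order of inclusion (minimal elements first) to keep the bookkeeping clean. Once you insert this explicit ``blow up the smooth strict transforms'' step into your construction, your outline becomes essentially the paper's proof.
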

  
\begin{Definition}\label{D:divisorization} 
If $\wi X$, $\pi$, $\wi\Sigma:=(\wi\Sigma_1,\ldots,\wi\Sigma_\ell)$, 
verify the conclusions of Proposition \ref{P:divisorization}, 
we say that $(\wi X,\pi,\wi\Sigma)$ is a divisorization of $(X,\Sigma)$. 
\end{Definition} 

Divisorizations are not unique. Note that if $X$ is a manifold and 
$\Sigma_1,\ldots,\Sigma_\ell$ are smooth hypersurfaces with 
simple normal crossings, then $(X,{\rm Id},\Sigma)$ is a divisorization 
of $(X,\Sigma)$, where ${\rm Id}$ is  the identity map. 
We now give an answer to Problem 1 in the general case:

\begin{Theorem}\label{T:main1} 
Let $X,L,\Sigma,\tau$ verify assumptions (A)-(D). The following are equivalent: 

(i) $(L,\Sigma,\tau)$ is big;

(ii) For every divisorization $(\wi X,\pi,\wi\Sigma)$ of $(X,\Sigma)$, 
there exists a singular Hermitian metric $h^\star$ on $\pi^\star L$ 
such that $c_1(\pi^\star L,h^\star)-\sum_{j=1}^\ell\tau_j[\wi \Sigma_j]$ 
is a K\"ahler current on $\wi X$; 

(iii) There exist a divisorization $(\wi X,\pi,\wi\Sigma)$ of $(X,\Sigma)$ 
and a singular Hermitian metric $h^\star$ on $\pi^\star L$ 
such that $c_1(\pi^\star L,h^\star)-\sum_{j=1}^\ell\tau_j[\wi \Sigma_j]$ 
is a K\"ahler current on $\wi X$;

(iv) There exist $p_0\in\N$ and $c>0$ such that 
$\dim H^0_0(X,L^p)\geq cp^n$ for all $p\geq p_0$.
\end{Theorem}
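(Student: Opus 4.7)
The plan is to reduce the statement to Theorem \ref{T:main0} via a divisorization $(\wi X,\pi,\wi\Sigma)$ furnished by Proposition \ref{P:divisorization}. On $\wi X$ the subvarieties $\wi\Sigma_j$ are smooth connected hypersurfaces with only normal crossings, and in particular pairwise non-contained; hence $(\wi X,\pi^\star L,\wi\Sigma,\tau)$ satisfies the hypotheses (A)-(D) of Theorem \ref{T:main0}, with condition (D) being vacuous. The implications $\text{(ii)}\Rightarrow\text{(iii)}$ and $\text{(iv)}\Rightarrow\text{(i)}$ are tautological, so the content lies in moving the remaining equivalences back and forth across $\pi$.

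The first key step is the identification
\[
\pi^\star:H^0_0(X,L^p,\Sigma,\tau)\;\xrightarrow{\ \cong\ }\;H^0_0(\wi X,\pi^\star L^p,\wi\Sigma,\tau)\,,\qquad p\geq1\,.
\]
Because $X$ is normal and $\pi:\wi X\to X$ is a proper bimeromorphic morphism (a composition of blow-ups with smooth center), one has $\pi_\star\cO_{\wi X}=\cO_X$ by Zariski's main theorem; the projection formula then gives an isomorphism $\pi^\star:H^0(X,L^p)\xrightarrow{\cong}H^0(\wi X,\pi^\star L^p)$. Proposition \ref{P:divisorization}(iii) says that $\ord(S,\Sigma_j)=\ord(\pi^\star S,\wi\Sigma_j)$ for every $j$, so this isomorphism identifies the vanishing conditions from \eqref{e:H00} on the two sides. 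In particular $\dim H^0_0(X,L^p)=\dim H^0_0(\wi X,\pi^\star L^p)$ for all $p$.

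With this identification, bigness of $(L,\Sigma,\tau)$ is equivalent, for \emph{every} divisorization, to bigness of $(\pi^\star L,\wi\Sigma,\tau)$ on $\wi X$. Theorem \ref{T:main0}, applied to $(\wi X,\pi^\star L,\wi\Sigma,\tau)$, yields the equivalence on $\wi X$ of bigness, the existence of a singular Hermitian metric $h^\star$ on $\pi^\star L$ with $c_1(\pi^\star L,h^\star)-\sum_{j=1}^\ell\tau_j[\wi\Sigma_j]$ a K\"ahler current, and the lower bound $\dim H^0_0(\wi X,\pi^\star L^p)\geq cp^n$. Transporting these through $\pi^\star$ gives $\text{(i)}\Rightarrow\text{(ii)}$ (applied to an arbitrary divisorization), $\text{(iii)}\Rightarrow\text{(iv)}$, and $\text{(iii)}\Rightarrow\text{(i)}$, which together with the trivial implications close the cycle.

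The main technical obstacle is justifying the pullback isomorphism and the preservation of the vanishing orders: the former uses normality of $X$ in a crucial way, while the latter---especially for components $\Sigma_j$ of codimension $\geq 2$, which are replaced in the divisorization by irreducible components of the exceptional divisor $E$---rests on the careful bookkeeping of Proposition \ref{P:divisorization}(ii)-(iii). Once these are in hand, the theorem is essentially a transfer of Theorem \ref{T:main0} across the map $\pi$.
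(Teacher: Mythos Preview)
Your proof is correct and follows essentially the same route as the paper: reduce to Theorem \ref{T:main0} on a divisorization via the isomorphism $H^0_0(X,L^p,\Sigma,\tau)\cong H^0_0(\wi X,\pi^\star L^p,\wi\Sigma,\tau)$ (which the paper records as Corollary \ref{C:iso}), then transport the equivalences across $\pi$. The only cosmetic difference is that you justify the isomorphism via $\pi_\star\cO_{\wi X}=\cO_X$ and the projection formula, whereas the paper argues directly by extending sections across the codimension-$\geq 2$ set $Y$ using normality.
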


An interesting consequence of Theorem \ref{T:main1} is the following. 
Assume that $(L,\Sigma,\tau)$ is big and all $\Sigma_j$ have dimension $n-1$. 
If one fixes proper analytic subsets $\Sigma_j'\subset\Sigma_j$ and considers 
the subspace $V_p\subset H^0_0(X,L^p)$ of sections that vanish to the higher order 
$(\tau_j+\delta)p$ along $\Sigma_j'$, then it holds as well that $\dim V_p\gtrsim p^n$ 
for all $p$ large enough, provided that $\delta>0$ is sufficiently small 
(see Corollary \ref{C:big} for the precise statement). 

Proposition \ref{P:divisorization}, Theorem \ref{T:main0}, 
and Theorem \ref{T:main1} are proved in Section \ref{S:big}.

\subsection{Equidistribution of zeros}\label{SS:equidist}
 
 Let $X,L,\Sigma,\tau$ verify assumptions (A)-(D), and assume in addition that there exists a K\"ahler form $\omega$ on $X$ and that $h$ is a singular Hermitian metric on $L$. We fix a smooth Hermitian metric $h_0$ on $L$ and write 
\begin{equation}\label{e:varphi}
\alpha:=c_1(L,h_0)\,,\,\;h=h_0e^{-2\varphi}\,,
\end{equation}
 where $\varphi\in L^1(X,\omega^n)$ is called the 
 {\it (global) weight of  $h$  relative to $h_0$}. The metric $h$ is called bounded, 
 continuous, resp.\ H\"older continuous, if $\varphi$ is a bounded, 
 continuous, resp.\ H\"older continuous, function on $X$. 

Let $H^0_{(2)}(X,L^p)$ be the Bergman space of $L^2$-holomorphic sections 
in $L^p$ relative to the metric $h^p:=h^{\otimes p}$ and the volume form 
$\omega^n$ on $X$, endowed with the inner product
\[(S,S')_p:=\int_{X}\langle S,S'\rangle_{h^p}\,\frac{\omega^n}{n!}\,,\]
and set $\|S\|_{p}^2:=(S,S)_{p}$. Let 
\[H^0_{0,(2)}(X,L^p)=H^0_{0,(2)}(X,L^p,\Sigma,\tau,h^p,\omega^n):=
H^0_{(2)}(X,L^p)\cap H^0_0(X,L^p)\] 
be the Bergman subspace of $L^2$-holomorphic sections in $H^0_0(X,L^p)$, 
where $H^0_0(X,L^p)$ was defined in \eqref{e:H00}. 
We assume in the sequel that the metric $h$ is bounded, so 
\[H^0_{(2)}(X,L^p)=H^0(X,L^p)\,,\,\;H^0_{0,(2)}(X,L^p)=H^0_0 (X, L^p)\,.\] 
For every $p\geq 1$ we consider the projective space
\[ \X_p:=\P H^0_{0,(2)}(X,L^p)\,,\,\;d_p:=\dim\X_p=\dim H^0_{0,(2)}(X,L^p)-1\,,\]
equipped with the Fubini-Study volume $\sigma_p=\omega_\FS^{d_p}\,$, 
where by $\omega_\FS$ we denote the Fubini-Study K\"ahler form on a projective space 
$\P^N$. We also consider the probability space
\[(\X_\infty,\sigma_\infty):= \prod_{p=1}^\infty (\X_p,\sigma_p)\,.\]  
The second problem we address in this article is the following:

%
%

\begin{Problem}\label{prob2}
Assume that $(L,\Sigma,\tau )$ is big and 
the metric $h$ is bounded. Do zeros of sequences from
$(\X_\infty,\sigma_\infty)$ equidistribute towards 
a positive closed current $T$ of bidegree $(1,1)$? 
That is, for $\sigma_\infty$-{\ke}a.{\ke}e.\ $\{s_p\}_{p\geq1}\in\X_\infty$, 
we have $\displaystyle\frac{1}{p}\,[s_p=0]\to T$ as $p\to\infty$, 
in the weak sense of currents on $X$. If yes, express $T$ in terms of $h$ and 
estimate the speed of convergence.
\end{Problem}

The bigness of $(L,\Sigma,\tau )$ is a reasonable assumption, 
in order to ensure that the spaces $H^0_{0}(X,L^p)$ have sufficiently many sections. 
Let $P_p,\gamma_p$ be the Bergman kernel function and Fubini-Study current of 
$H^0_{0,(2)}(X,L^p)$, defined in \eqref{e:Bkf} and \eqref{e:FS}. Then  
\begin{equation}\label{e:FSpot}
\frac{1}{p}\,\gamma_p=c_1(L,h)+\frac{1}{2p}\,dd^c\log P_p=
\alpha+dd^c\varphi_p\,,\,\text{ where }\,\varphi_p=
\varphi+\frac{1}{2p}\,\log P_p\,.
\end{equation}
We call the function $\varphi_p$ the {\em global Fubini-Study potential} 
of $\gamma_p$. To answer Problem \ref{prob2}, we first study the convergence 
of the Fubini-Study currents. We have the following:

\begin{Theorem}\label{T:FSpot}
Let $X,L,\Sigma,\tau$ verify assumptions (A)-(D), and assume that 
$(L,\Sigma,\tau)$ is big and there exists a K\"ahler form $\omega$ on $X$. 
Let $h$ be a continuous Hermitian metric on $L$ and $\alpha,\varphi_p$ be 
defined in \eqref{e:varphi}, resp.\ \eqref{e:FSpot}. Then there exists an 
$\alpha$-psh function $\varphi_\eq$ on $X$ such that as $p\to\infty$,
\begin{equation}\label{e:cFSpot}
\varphi_p\to\varphi_\eq \text{ in } L^1(X,\omega^n),\;
\frac{1}{p}\,\gamma_p=\alpha+dd^c\varphi_p\to T_\eq:=
\alpha+dd^c\varphi_\eq \text{ weakly on $X$\,.}
\end{equation}
Moreover, if $h$ is H\"older continuous then there exist a constant 
$C>0$ and $p_0\in\N$ such that 
\begin{equation}\label{e:eFSpot}
\int_X|\varphi_p-\varphi_\eq|\,\omega^n\leq C\,\frac{\log p}{p}\,,
\,\;\text{for all $p\geq p_0$}\,.
\end{equation}
\end{Theorem}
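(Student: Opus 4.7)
My plan is to define $\varphi_\eq$ as an extremal $\alpha$-psh envelope, prove matching upper and lower bounds on the Fubini-Study potentials $\varphi_p$, and deduce the $L^1$ and current convergence statements. First I introduce the equilibrium envelope on a divisorization $(\wi X,\pi,\wi\Sigma)$ of $(X,\Sigma)$ from Proposition~\ref{P:divisorization}: set $\wi\alpha:=\pi^\star\alpha$, $\wi\varphi:=\pi^\star\varphi$, and
\[\wi\varphi_\eq:=\Bigl(\sup\bigl\{\wi\psi\colon \wi\psi\in\PSH(\wi X,\wi\alpha),\;\wi\psi\le\wi\varphi,\;\nu(\wi\psi,\wi\Sigma_j)\ge\tau_j,\ 1\le j\le\ell\bigr\}\Bigr)^\star;\]
then $\varphi_\eq$ on $X$ is determined by $\varphi_\eq\circ\pi=\wi\varphi_\eq$ away from the exceptional locus and extends as an $\alpha$-psh function across the codimension-two image thanks to boundedness from above. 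By Theorem~\ref{T:main1}(ii), a local weight of $h^\star$ is an admissible competitor after normalization, so $\varphi_\eq$ is nontrivial.

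For the upper bound, take $S\in H^0_{0,(2)}(X,L^p)$ with $\|S\|_p\le 1$ and set $u_S:=\tfrac{1}{2p}\log|S|^2_{h_0^p}$. Normality and $\Sigma_j\not\subset X_\sing$ allow $u_S$ to extend across $X_\sing$ to an $\alpha$-psh function on $X$ with $\nu(u_S,\Sigma_j)\ge t_{j,p}/p\ge\tau_j$. In a local frame where $h_0=e^{-2\phi_0}$ and $S\leftrightarrow f$ holomorphic, the sub-mean-value inequality for the psh function $|f|^2$ on a ball of radius $r$, combined with $\|S\|_p\le 1$ and Hölder continuity of $\varphi$, gives
\[u_S(x)\le\varphi(x)+Cr^\beta+\frac{C\log(1/r)}{p}.\]
Optimizing $r\sim(\log p/p)^{1/\beta}$ yields $u_S\le\varphi+C\log p/p$, so $\pi^\star u_S-C\log p/p$ is a competitor in the envelope on $\wi X$, whence $u_S\le\varphi_\eq+C\log p/p$. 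Taking the supremum over an orthonormal basis of $H^0_{0,(2)}(X,L^p)$ delivers the pointwise bound $\varphi_p\le\varphi_\eq+C\log p/p$; a slower choice of $r$ handles the merely continuous case with an $o(1)$ rate.

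For the matching lower bound I would work on $\wi X$, approximating $\wi\varphi_\eq$ from above by Demailly regularizations $\wi\psi_k$ whose curvatures dominate $-\varepsilon\wi\omega+\sum_j\tau_j[\wi\Sigma_j]$ modulo controlled smoothing errors. Applying the Ohsawa-Takegoshi extension theorem on $\wi X$ with singular weight $p\wi\psi_k$, for $k=k(p)$ tuned so the smoothing loss is $O(\log p/p)$, produces sections $\wi S_p\in H^0(\wi X,\pi^\star L^p)$ vanishing to order $\ge t_{j,p}$ along each $\wi\Sigma_j$, with $L^2$-norm at most polynomial in $p$ and value at $\wi x_0:=\pi^{-1}(x_0)$ matching $e^{p\varphi_\eq(x_0)}$ up to polynomial loss, for any fixed $x_0\in X_\reg\setminus\bigcup_j\Sigma_j$. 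By Proposition~\ref{P:divisorization}(iii) each $\wi S_p$ descends to $S_p\in H^0_{0,(2)}(X,L^p)$, giving $\varphi_p(x_0)\ge\varphi_\eq(x_0)-C\log p/p$. Combining the two bounds and integrating then yields \eqref{e:eFSpot}; the $L^1$ convergence and the weak current convergence in \eqref{e:cFSpot} follow from a Hartogs-type compactness argument for quasi-psh functions together with the continuity of $dd^c$. The hard part is the Ohsawa-Takegoshi construction, which must furnish on $\wi X$ a singular weight with both the right positivity relative to $\wi\alpha$ and the correct logarithmic poles along all $\wi\Sigma_j$ simultaneously, while keeping the $L^2$ estimate only polynomially large in $p$ so that descent to $X$ preserves the required exponential lower bound.
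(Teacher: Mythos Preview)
Your definition of $\varphi_\eq$ via the envelope on a divisorization and your upper bound (sub-mean-value inequality, optimize the radius, recognize $u_S$ as a competitor) match the paper's Propositions~\ref{P:ueBkf} and~\ref{P:uewiFS}. The gap is in the lower bound. Demailly regularization of $\wi\varphi_\eq$ from above yields $\wi\psi_k$ with $\wi\alpha+dd^c\wi\psi_k\ge-\varepsilon_k\wi\omega$, but nothing ties $\varepsilon_k$ quantitatively to $k$ so that you can take $\varepsilon_k=O(1/p)$; without this, multiplying the weight by $p$ makes the curvature deficit blow up and the $L^2$ estimate fails. Moreover, the regularization \emph{decreases} Lelong numbers, so $\nu(\wi\psi_k,\wi\Sigma_j)$ may fall strictly below $\tau_j$, and the sections you produce need not lie in $H^0_0(\wi X,\wi L^p,\wi\Sigma,\tau)$. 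Even factoring out $\sum_j\tau_j\log\sigma_j$ first and regularizing the $\wi\theta$-psh remainder $\wi\varphi_\req$ hits the same wall: $\wi\theta$ is not semipositive, so you cannot make $\wi\theta+dd^c(\text{regularized})\ge0$.

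The paper's fix (Proposition~\ref{P:lewiFS}) is to avoid regularization entirely and use the bigness of $\{\wi\theta\}$, which follows from Theorem~\ref{T:main1}: there is a $\wi\theta$-psh function $\eta$ with analytic singularities along $Z_0=\NAmp(\{\wi\theta\})$ and $\wi\theta+dd^c\eta\ge\varepsilon_0\wi\omega$. The weight on $\wi L^p$ is the convex-type combination
\[\psi_p=(p-p_0)\wi\varphi_\eq+p_0\eta+\sum_{j=1}^\ell(p_0\tau_j+1)\log\sigma_j,\]
for a fixed large $p_0$. The envelope term contributes only nonnegativity, but the $p_0\eta$ term contributes a fixed amount $p_0\varepsilon_0\wi\omega$ of strict positivity, independent of $p$, enough to run H\"ormander's $\bar\partial$-estimate; the $\log\sigma_j$ term guarantees the vanishing order. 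A consequence is that your claimed uniform pointwise bound $\varphi_p(x_0)\ge\varphi_\eq(x_0)-C\log p/p$ is too strong: the actual lower bound is $\wi\varphi_p\ge\wi\varphi_\eq+\tfrac{C}{p}\eta+\tfrac{1}{p}\sum_j\log\sigma_j$, which blows down along $Z=\wi\Sigma_1\cup\ldots\cup\wi\Sigma_\ell\cup Z_0$. The $L^1$ estimate \eqref{e:eFSpot} then follows because $\eta,\log\sigma_j\in L^1(\wi X,\wi\omega^n)$, not because the pointwise error is bounded.
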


\begin{Definition}\label{D:T_eq}
The current $T_\eq$ from Theorem \ref{T:FSpot} is called 
{\it the equilibrium current associated to $(L,h,\Sigma,\tau )$}.
\end{Definition}

Theorem \ref{T:FSpot} is proved in Section \ref{S:FSpotentials}. 
The function $\varphi_\eq$ is constructed as follows. 
Let $(\wi X,\pi,\wi\Sigma)$ be a divisorization of $(X,\Sigma)$ as in 
Definition \ref{D:divisorization}, and let $\wi\alpha=\pi^\star\alpha$, 
$\wi\varphi=\varphi\circ\pi$. We introduce in Section \ref{S:env} 
the equilibrium envelope $\wi\varphi_\eq$ of $(\wi\alpha,\wi\Sigma,\tau,\wi\varphi)$, 
as the largest $\wi\alpha$-psh function dominated by $\wi\varphi$ on $\wi X$, 
and with logarithmic poles of order $\tau_j$ along $\wi\Sigma_j$, $1\leq j\leq\ell$ 
(see \eqref{e:enveq}, \eqref{e:wienveq}). In Theorem \ref{T:reg} 
we study the regularity of $\wi\varphi_\eq$ when $\wi\varphi$ is continuous, 
resp.\ H\"older continuous, and show that $\wi\varphi_\eq$ is continuous outside 
a certain analytic subset of $\wi X$, resp.\ H\"older with singularities along 
that analytic subset (see Definition \ref{D:Hosing}). The function $\varphi_\eq$ 
is then constructed by pushing down $\wi\varphi_\eq$ to $X$. 

Theorem \ref{T:FSpot} is a generalization of the following foundational
result of Tian \cite{Ti90} (with improvements by \cite{Ca99,Ru98,Z98}, 
see also \cite[Theorem 5.1.4]{MM07}): 
If $X$ is a compact K\"ahler manifold and $(L,h)\to X$
is a positive line bundle (with smooth metric $h$), 
then $\varphi_p\to\varphi$ and 
$\frac{1}{p}\,\gamma_p\to c_1(L,h)$ as $p\to\infty$ in the $\cC^\infty$-topology. 
If $h$ is a singular metric whose curvature is a K\"ahler current
it was shown in \cite[Theorem 5.1]{CM15} that $\varphi_p\to\varphi$
in $L^1(X,\omega^n)$
and $\frac{1}{p}\,\gamma_p\to c_1(L,h)$ weakly as $p\to\infty$.
On the other hand, Bloom \cite{Bl05,Bl09} (cf.\ also Bloom-Levenberg
\cite{BL15}) pointed out the role of the extremal plurisubharmonic
functions in equidistribution theory for polynomials and Berman \cite{Ber07,Ber09} 
extended this point of view to the context of K\"ahler manifolds.
In \cite[Theorem 1.3]{DMM16} it is shown that in the case of a polarized
projective manifold $(X,L)$ and for a H\"older continuous weight $\varphi$,
we have $\|\varphi_p-\varphi_\eq\|_{\infty}=O(p^{-1}\log p)$ as $p\to\infty$.
We also note that the statement of Theorem \ref{T:FSpot} is new
even in the case when $X$ is smooth and $\Sigma=\emptyset$ (see Corollary \ref{cor:FSpot}).
 
Using Theorem \ref{T:FSpot}, we obtain a positive answer to the above 
equidistribution problem in the case when the metric $h$ is continuous.
In this formulation it can be seen as a large deviation principle in this context.

\begin{Theorem}\label{T:main2}
Let $X,L,\Sigma,\tau$ verify assumptions (A)-(D), 
let $h$ be a singular Hermitian metric on $L$, and assume that 
$(L,\Sigma,\tau)$ is big and there exists a K\"ahler form $\omega$ on $X$.
 
(i) If $h$ is continuous then $\displaystyle\frac{1}{p}\,[s_p=0]\to T_\eq$ 
as $p\to\infty$, in the weak sense of currents on $X$, 
for $\sigma_\infty$-{\ke}a.{\ke}e.\ $\{s_p\}_{p\geq1}\in\X_\infty$\,.
 
(ii) If $h$ is H\"older continuous then there exists a constant $c>0$ 
with the following property: For any sequence of positive numbers 
$\{\lambda_p\}_{p\geq1}$ such that 
\[\liminf_{p\to\infty} \frac{\lambda_p}{\log p}>(1+n)c\,,\]
there exist subsets $E_p\subset\X_p$ such that, for all $p$ sufficiently large, 

(a) $\sigma_p(E_{p})\leq cp^n\exp(-\lambda_p/c)$\,,

(b) if $s_p\in\X_p\setminus E_p$ we have 
\[\Big|\Big \langle\frac{1}{p}\,[s_p=0]-T_\eq,\phi\Big\rangle\Big|
\leq\frac{c\lambda_p}{p}\,\| \phi\|_{\cC^2}\,,\] 
for any $(n-1,n-1)$-form $\phi$ of class $\cC^2$  on $X$. 

In particular, the last estimate holds for $\sigma_\infty$-{\ke}a.{\ke}e.\ 
$\{s_p\}_{p\geq1}\in\X_\infty$ provided that $p$ is large enough.
\end{Theorem}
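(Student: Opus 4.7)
The plan is to combine Theorem \ref{T:FSpot} with the large-deviation/meromorphic-transform approach of Dinh-Sibony \cite{DS06}, as adapted to singular metrics in \cite{CM15,CMM17,DMM16}. Fix for each $p$ an orthonormal basis $\{S_j^p\}_{j=0}^{d_p}$ of $H^0_{0,(2)}(X,L^p)$. For $s_p=\sum_{j=0}^{d_p}a_jS_j^p$ corresponding to $[a]\in\X_p$, Poincar\'e-Lelong together with the identity $\varphi_p=\varphi+\frac{1}{2p}\log P_p$ yields the decomposition
\begin{equation*}
\frac{1}{p}\,[s_p=0]-T_\eq=dd^c(\varphi_p-\varphi_\eq)+\frac{1}{p}\,dd^c\psi_{p,a},\quad \psi_{p,a}(x):=\log\frac{\|s_p(x)\|_{h^p}}{\|a\|\sqrt{P_p(x)}}\,,
\end{equation*}
with $\psi_{p,a}\leq 0$ by Cauchy-Schwarz applied to the expansion of $s_p$ in the orthonormal basis. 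Theorem \ref{T:FSpot} handles the first summand, so everything reduces to a probabilistic control of the random quasi-psh function $\psi_{p,a}$.

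The key is the pointwise estimate: for each $x\in X$ and each $\lambda>0$,
\begin{equation*}
\sigma_p\{[a]\in\X_p:\psi_{p,a}(x)<-\lambda\}\leq C_0 e^{-2\lambda},
\end{equation*}
with $C_0$ absolute. Indeed, for fixed $x$ we have $e^{\psi_{p,a}(x)}=|\langle a,v_x\rangle|/\|a\|$ for a unit vector $v_x\in\C^{d_p+1}$ built from the normalized basis values at $x$, and the Fubini-Study measure of the sublevel sets of such linear forms on $\P^{d_p}$ satisfies this bound by direct integration. To upgrade this to a \emph{uniform-in-$x$} estimate, I would apply the meromorphic-transform/net argument of \cite[Section 4]{DS06} and \cite[Section 5]{CMM17}: combining the sup-norm bound $P_p\leq Cp^n$ with polynomial Bernstein-Markov-type estimates for sections in $H^0_0(X,L^p)$, one produces a set $E_p\subset\X_p$ with $\sigma_p(E_p)\leq cp^n\exp(-\lambda_p/c)$ such that for $[a]\notin E_p$,
\begin{equation*}
\Big|\Big\langle\tfrac{1}{p}\,dd^c\psi_{p,a},\phi\Big\rangle\Big|\leq\frac{c\lambda_p}{p}\,\|\phi\|_{\cC^2}
\end{equation*}
for every $(n-1,n-1)$-form $\phi$ of class $\cC^2$. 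This establishes (ii)(a).

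Part (ii)(b) then follows by pairing the Poincar\'e-Lelong decomposition with $\phi$, integrating by parts in the first summand, invoking the H\"older quantitative bound $\|\varphi_p-\varphi_\eq\|_{L^1(X,\omega^n)}\leq C\log p/p$ from Theorem \ref{T:FSpot}, and using the hypothesis $\liminf\lambda_p/\log p>(1+n)c$ to absorb the $\log p/p$ contribution into $\lambda_p/p$. For (i), with $h$ only continuous, choose $\lambda_p=(n+2)c\log p$; then $\sum_p\sigma_p(E_p)<\infty$, so by Borel-Cantelli, for $\sigma_\infty$-a.e.\ $\{s_p\}\in\X_\infty$ we have $s_p\notin E_p$ for all large $p$, whence $\tfrac{1}{p}\langle dd^c\psi_{p,s_p},\phi\rangle=O(\log p/p)\to 0$ for any fixed test form $\phi$. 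Combined with the qualitative $L^1$-convergence $\varphi_p\to\varphi_\eq$ from Theorem \ref{T:FSpot} (still valid for continuous metrics), this yields weak convergence $\tfrac{1}{p}[s_p=0]\to T_\eq$. The final ``in particular'' clause follows from the same Borel-Cantelli application, since the summability of $\sigma_p(E_p)$ under the hypothesis of (ii) forces $s_p\notin E_p$ eventually almost surely.

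The main obstacle I anticipate is the passage from the pointwise probabilistic estimate to the uniform-in-$x$ one, because $P_p$ can degenerate near $\bigcup_j\Sigma_j\cup X_\sing$, so the standard Bernstein-Markov/net arguments do not apply verbatim on $X$. I expect to resolve this by working on the desingularization $\wi X$ from Proposition \ref{P:divisorization}, pulling sections back via $\pi$ to appropriate $\wi\Sigma$-divisor-twisted bundles on $\wi X$, and invoking the partial Bergman kernel asymptotics and the regularity of $\wi\varphi_\eq$ established in the earlier sections of the paper.
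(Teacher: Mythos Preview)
Your approach is essentially the same as the paper's. Both split $\frac{1}{p}[s_p=0]-T_\eq$ into the deterministic piece $\frac{1}{p}\gamma_p-T_\eq=dd^c(\varphi_p-\varphi_\eq)$, handled by Theorem \ref{T:FSpot}, and the random piece $\frac{1}{p}([s_p=0]-\gamma_p)=\frac{1}{p}\,dd^c\psi_{p,a}$, handled by Dinh--Sibony large deviations after passing to the divisorization $\wi X$; Borel--Cantelli then gives the almost-sure statements. The only difference is in packaging: the paper isolates the random piece as a separate result (Theorem \ref{T:speed}) and applies the abstract meromorphic-transform estimate \cite[Lemma 4.2(d)]{DS06} directly to the Kodaira map $\Phi_p:\wi X\dashrightarrow\wi\X_p$, using the projective-space bounds of \cite[Lemma 4.6]{CMN16}, rather than the pointwise-estimate-plus-net/Bernstein--Markov route you sketch. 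This has the advantage of sidestepping any explicit sup-norm bound on $P_p$ or Bernstein--Markov inequality, which (as you correctly anticipate) would be delicate near $E$ where $\pi^\star\omega^n$ degenerates; the Dinh--Sibony machinery only needs the intermediate degrees $d(\Phi_p),\,\delta(\Phi_p)$ and the modulus-of-continuity input for $\wi\X_p=\P^{d_p}$, all of which are immediate.
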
 

The proof of Theorem \ref{T:main2} is given in Section \ref{S:Tmain2}.
We refer to \cite{BCM,Bl05,Bl09,BL15,CM15,CM13,CM17,CMM17,CMN18,DMM16,DMS,DS06,
SZ99}
and to the surveys \cite{BCHM,Z18} 
for equidistribution results for holomorphic sections in various contexts.


 \section{Preliminaries}\label{S:Preliminaries}
 
We start by recalling a few notions of pluripotential theory on analytic 
spaces that will be needed throughout the paper. 
We then recall some basic facts about Bergman kernels and Fubini-Study currents. 
 
 \subsection{Compact complex manifolds and analytic spaces}\label{SS:Preliminaries}
 
 Let $X$ be a compact complex manifold and let $\omega$ 
 be a Hermitian form on $X$. If $T$ is a positive closed current on $X$ 
 we denote by $\nu(T,x)$ the Lelong number of $T$ at $x\in X$ (see e.{\ke}g.\ \cite{D93}). 
 A function $\varphi:\ X\to \R\cup\{-\infty\}$ is called {\it quasiplurisubharmonic} (qpsh) 
 if it is locally the sum of a psh function and smooth one. 
 Let $\alpha$ be a smooth real closed $(1,1)$-form on $X.$  
 A qpsh function $\varphi$ is called {\it $\alpha$-plurisubharmonic} 
 ($\alpha$-psh) if $\alpha+dd^c\varphi\geq 0$ in the sense of currents. 
 We denote by $\PSH(X,\alpha)$ the set of all $\alpha$-psh functions on $X$. 
 The Lelong number of an $\alpha$-psh function $\varphi$ 
 at a point $x\in X$ is defined by $\nu(\varphi,x):=\nu(\alpha+dd^c\varphi,x)$. 
 Note that if $\varphi=u+\chi$ 
 near $x$, where $u$ is psh and $\chi$ is smooth, then $\nu(\varphi,x)=\nu(u,x)$. 

 Since in general the $\ddbar$-lemma does not hold on $X$, 
 we will consider the $\ddbar$-cohomology and particularly the 
 space $H^{1,1}_\ddbar(X,\R)$ (see e.{\ke}g.\ \cite{Bo04}). 
 This space is finite dimensional, and if $\alpha$ is a smooth 
 real closed $(1,1)$-form on $X$ we denote its $\ddbar$-cohomology class 
 by $\{\alpha\}_\ddbar$. Note that if $X$ is a compact K\"ahler manifold then 
 by the $\ddbar$-lemma  $H^{1,1}_\ddbar(X,\R)=H^{1,1}(X,\R)$ 
 and we write $\{\alpha\}_\ddbar=\{\alpha\}$.
 
 \begin{Definition}
 A real closed current $T$ of bidegree $(1,1)$ on $X$ is called a 
 {\it  K\"ahler current} if $T\geq \varepsilon\omega$ for some number 
 $\varepsilon>0$. A class $\{\alpha\}_\ddbar$ is called {\it big} if it contains a K\"ahler current.  
\end{Definition}

Suppose that $\{\alpha\}_\ddbar$ is big. By Demailly's regularization theorem \cite{D92}, 
one can find a K\"ahler current $T\in\{\alpha\}_\ddbar$ with {\em analytic singularities}, 
i.{\ke}e.\ of the form $T=\alpha+dd^c\varphi\geq\epsilon\omega$, where $\epsilon>0$ 
and $\varphi$ is a qpsh function such that
\[\varphi=c\log\Big(\sum_{j=1}^N |g_j|^2\Big)+\chi\,,\]
locally on $X$, where $c>0$, $\chi$ is smooth and $g_j$ are holomorphic functions, 
$1\leq j\leq N$.

The {\em non-ample locus} of $\{\alpha\}_\ddbar$ is defined in 
\cite[Definition\ 3.16]{Bo04} as the set 
\begin{align*}
\NAmp\big(\{\alpha\}_\ddbar\big)&=
\bigcap\big\{E_+(T):\,\text{$T\in\{\alpha\}_\ddbar$ K\"ahler current}\big\} \\
&=\bigcap\big\{E_+(T):\,\text{$T\in\{\alpha\}_\ddbar$ 
K\"ahler current with analytic singularities}\big\},
\end{align*}
where $E_+(T)=\{x\in X:\,\nu(T,x)>0\}$, and the second equality 
follows by Demailly's regularization theorem \cite{D92}. 
Hence $\NAmp\big(\{\alpha\}_\ddbar\big)$ is an analytic subset of $X$. 
The {\it ample locus} of $\alpha$ is 
$\Amp\big(\{\alpha\}_\ddbar\big):=X\setminus\NAmp\big(\{\alpha\}_\ddbar\big)$. 
It is shown in \cite[Theorem\ 3.17]{Bo04} that there exists a K\"ahler current 
$T\in\{\alpha\}_\ddbar$ with analytic singularities such that 
$E_+(T)=\NAmp\big(\{\alpha\}_\ddbar\big)$. 
  
\medskip 

Let now $X$ be a complex space. A chart $(U,\iota,V)$ on X is a triple consisting 
of an open set $U \subset X$, a closed complex space $V \subset G \subset \C^N$ 
in an open set $G$ of $\C^N$ and a biholomorphic map $\iota:\ U \to V$ 
(in the category of complex spaces). The map $\iota:\ U \to G \subset \C^N$ 
is called a local embedding of $X$. We write $X=X_\reg \cup X_\sing$, 
where $X_\reg$ and $X_\sing$ are the sets of regular and singular points of $X$. 
Recall that a reduced complex space $(X,\cO)$ is called {\it normal} 
if for every $x \in X$ the local ring $\cO_x$ is integrally closed in its quotient field 
$\cM_x$, cf.\ \cite[p.\ 124]{GR84}). 
Every normal complex space is locally irreducible and locally pure-dimensional 
(see \cite[p.\ 125]{GR84}), and $X_\sing$ is a closed complex subspace of $X$ with 
$\codim X_\sing \geq 2$.  

A continuous (resp.\ smooth) function on $X$ is a function $\varphi:\ X \to\ \C$ 
such that for every $x \in X$ there exists a local embedding $\iota:\ U\to G \subset \C^N$ 
with $x \in U$ and a continuous (resp.\ smooth) function $\tilde\varphi:G \to\C$ such that 
$\varphi|_U =\tilde\varphi\circ\iota$. A (strictly) plurisubharmonic (psh) function on $X$ 
is a function $\varphi:X\to[-\infty, \infty)$ such that for every $x \in X$ 
there exists a local embedding $\iota: U\to G \subset \C^N$ with $x \in U$ 
and a (strictly) psh function $\tilde\varphi: G\to[-\infty,\infty)$ such that 
$\varphi|_U= \tilde\varphi\circ\iota$.  If $\tilde\varphi$ can be chosen continuous 
(resp.\ smooth), then $\varphi$ is called a continuous (resp.\ smooth) psh function. 
We let $\PSH(X)$ denote the set of all psh functions on $X$. 

Assume now that $X$ has pure dimension $n$. We consider currents on $ X$ as
defined in \cite{D85}: If $\cD^{p,q}(X)$ is the space of forms with compact support, 
endowed with the inductive limit topology, then the dual $\cD_{p,q}(X)$ of 
$\cD^{p,q}( X)$ is the space of currents of bidimension $(p, q)$, or bidegree 
$(n-p, n-q)$, on $X$. If $T\in\cD_{n-1,n-1} (X)$ is so that, for every $x\in X$, 
there is a domain $U$ containing $x$ and  $v \in \PSH(U)$ with $T=dd^c v$ on $U$, 
then $T$ is positive and closed, and we say that $v$ is a local potential of $T$.  
A Hermitian form on $X$ is a smooth $(1, 1)$-form $\omega$ such that for every point 
$x \in X$ there exist a local embedding $\iota:\ U\ni x \to G \subset\C^N$ 
and a Hermitian form $\tilde\omega$ on $G$ with $\omega=\iota^\star\tilde\omega$ 
on $U \cap X_\reg$. Note that $\omega^n/n!$ gives locally an area measure on $X$. 
A K\"ahler form on $X$ is a current $T\in \cD_{n-1,n-1}(X)$ whose local potentials 
extend to smooth strictly psh functions in local embeddings of $X$ to Euclidean spaces. 
We call $X$ a K\"ahler space if $X$ admits a K\"ahler form 
(see also \cite[p.\ 346]{Gr62}, \cite [Section 5]{Ohs87}).
  
The notions of qpsh and $\alpha$-psh function on $X$, where $\alpha$ 
is a smooth real closed $(1,1)$-form on $X$, are defined exactly as in the 
case when $X$ is smooth. We denote by $\PSH(X,\alpha)$ the set of all 
$\alpha$-psh functions on $X$. If $X$ is compact, a function $\rho:X\to\R$ 
is called H\"older continuous if there exists a finite open cover of $X$ by charts 
$(U,\iota , V)$, $V\subset G\subset\C^N$, such that $\rho\mid_U$ 
is H\"older continuous with respect to the metric on $U$ 
induced by the Euclidean distance on $\C^N$. 

If $(L,h)$ is a singular Hermitian holomorphic line bundle over $X$, 
the {\em curvature current} $c_1(L,h)$ of $h$ is defined as in the 
case when $X$ is smooth \cite{D90}. If $e_U$ is a local holomorphic frame 
of $L$ on some open set $U\subset X$ then $|e_U|_h=e^{-\varphi_U}$, 
where $\varphi_U\in L^1_{loc}(U)$ is called the {\it local weight} 
of the metric $h$ with respect to $e_U$, and $c_1(L,h)\mid_U=dd^c\varphi_U$. 
We say that $h$ is {\it positively curved}, resp.\ {\it strictly positively curved}, 
if $c_1(L,h)\geq 0$, resp.\ $c_1(L,h)\geq\varepsilon\omega$ for some 
$\varepsilon>0$ and some Hermitian form  $\omega$ on $X$.

\subsection{Bergman kernel functions and Fubini-Study currents}\label{SS:BK-FS}
 Let $X$ be as in (A), $\omega$ be a Hermitian form on $X$, and $(L,h)$ be a singular Hermitian holomorphic line bundle on $X$. Since $X$ is compact, the space $H^0(X,L)$ is finite dimensional. Let $H^0_{ (2)}(X, L) = H^0_{ (2)}(X,L,h, \omega^n)$ be the Bergman space of $L^2$-holomorphic sections of $L$ relative to the metric $h$ and the volume form $\omega^n /n!$ on $X$, endowed with the inner product
\begin{equation}\label{e:inner_product}
(S,S'):=\int_X\langle S,S'\rangle_h\,\frac{\omega^n}{n!}\,.
\end{equation}
Set $\|S\|^2=\|S\|^2_{h,\omega^n}:=(S,S)$. 

Let $V$ be a subspace of $H^0_{ (2)}(X, L)$, $r=\dim V$, and $S_1,\ldots,S_r$ be an orthonormal basis of $V$. The {\em Bergman kernel function} $P=P_V$ of $V$ is defined by 
\begin{equation}\label{e:Bkf}
P(x)=\sum_{j=1}^r|S_j(x)|_h^2,\;\;|S_j(x)|_h^2:=\langle S_j(x),S(x)\rangle_h,\;x\in X.
\end{equation}
Note that this definition is independent of the choice of basis. Let $U$ be an open set in $X$ such that $L$ has a local holomorphic frame $e_U$ on $U$. Then $|e_U|_h=e^{-\varphi_U}$, where $\varphi_U\in L^1_{loc}(U,\omega^n)$, and $S_j=s_je_U$, where $s_j\in\cO_X(U)$. It follows that 
\begin{equation}\label{e:Bk_local}
\log P\mid_U= \log\Big(\sum_{j=1}^r|s_j|^2 \Big)-2\varphi_U\,,
\end{equation}
which shows that $\log P_p\in L^1(X,\omega^n)$.

The Kodaira map determined by $V$ is the meromorphic map given by 
\begin{equation}\label{e:Kodaira_alg_dual}
\Phi=\Phi_V:X\dashrightarrow \P(V^\star)\,,\,\;\Phi(x)=\{S\in V:\,S(x)=0\},\;x\in X\setminus \Bs(V)\,,
\end{equation}
where a point in  $\P(V^\star)$ is identified with a hyperplane through the origin in $V$ and $\Bs(V)= \{x \in X :\,S(x) = 0,\,\forall\,S\in V\}$ is the base locus of $V$. We define the {\em Fubini-Study current}  $\gamma=\gamma_V$ of $V$ by
\begin{equation}\label{e:FS}
\gamma:= \Phi^\star(\omega_\FS),
\end{equation}
where $\omega_\FS$ denotes the Fubini-Study form on $\P(V^\star)$. Then $\gamma$ is a positive closed current of bidegree $(1,1)$ on $X$, and if $U$ is as above we have
\begin{equation}\label{e:FS_local}
\gamma\mid_U = \frac{1}{2}dd^c \log\Big(\sum_{j=1}^r |s_j|^2\Big).
\end{equation}
Hence by \eqref{e:Bk_local},
\begin{equation}\label{e:Bk_FS}
\gamma= c_1 (L, h)+ \frac{1}{2}\,dd^c \log P\,.
\end{equation}

\medskip

Let now $X,L,\Sigma,\tau$ verify assumptions (A)-(D) and $H^0_0 (X, L^p)$ be the space defined in \eqref{e:H00}. If $h$ is a bounded metric on $L$ then $H^0_0 (X, L^p )\subset H^0_{(2)}(X,L^p,h^p,\omega^n)$. The Bergman kernel function $P_p$ of $H^0_0 (X, L^p)$ is called the {\em partial Bergman kernel function} of the space of sections that vanish to order $\tau p$ along $\Sigma$. It satisfies the following variational principle:
\begin{equation}\label{e:variational-carac}
P_p (x)= \max \left\lbrace |S(x)|^2_{h^p} :\,S\in H^0_0(X, L^p ),\,\|S\|_p = 1\right\rbrace,
\end{equation}
where $\|\LargerCdot\|_p$ denotes the norm given by the inner product in $H^0_{(2)}(X,L^p,h^p,\omega^n)$.

 
\section{Dimension growth of spaces of sections vanishing along subvarieties}\label{S:big}
 
In this section we give the proofs of Theorem \ref{T:main0}, Proposition \ref{P:divisorization}, and Theorem \ref{T:main1}.
\subsection{Divisorization}\label{SS:divisorization} We start by proving the existence of the divisorization of $(X,\Sigma)$ claimed in Proposition \ref{P:divisorization}. We will use the following theorems of Hironaka on resolution of singularities. For the first one we refer the reader to \cite[Theorem 13.2]{BM97}.

\begin{Theorem}[Hironaka]\label{T:H1} 
If $X$ is a compact, reduced complex space then there exists a compact complex manifold $\widehat X$ and a surjective holomorphic map $\sigma:\widehat X\to X$ such that $\sigma:\widehat X\setminus E\to X_\reg$ is a biholomorphism, where $E=\sigma^{-1}(X_\sing)$ is a divisor with only normal crossings. Moreover, if $X$ is irreducible then $\widehat X$ is connected and $\dim\widehat X=\dim X$. 
\end{Theorem}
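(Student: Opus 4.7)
The plan is to acknowledge that this is Hironaka's celebrated resolution theorem for complex spaces, and rather than redo the proof from scratch I would reduce to the cited reference \cite{BM97}. Still, the strategy worth outlining is the Bierstone--Milman constructive desingularization. First, one uses the fact that a compact reduced complex space $X$ admits a finite cover by charts $\iota_\alpha\colon U_\alpha \hookrightarrow G_\alpha\subset\C^{N_\alpha}$ in which $X$ is cut out by a coherent ideal sheaf $\cI_\alpha\subset\cO_{G_\alpha}$. The problem is thus localized to embedded resolution: construct a proper bimeromorphic map $\sigma_\alpha\colon \widehat{G}_\alpha\to G_\alpha$, obtained as a finite composition of blow-ups with smooth centers, such that the strict transform $\widehat{X}_\alpha$ of $X\cap G_\alpha$ is smooth and the total transform of $X_\sing\cap G_\alpha$ is a simple normal crossings divisor.

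The main engine of the Bierstone--Milman approach is a local invariant $\mathrm{inv}_X(x)$ (built from Hilbert--Samuel data, orders of coefficient ideals, and iterated restrictions to maximal contact hypersurfaces) that is upper semicontinuous and, crucially, strictly decreases under the blow-up whose center is the maximum locus of $\mathrm{inv}_X$. Induction on the lexicographic value of this invariant, combined with Noetherian induction on dimension, terminates the process in finitely many blow-ups. The critical point is that this invariant is \emph{functorial} under smooth morphisms and independent of local embedding, which allows the locally defined blow-up centers on the overlaps $U_\alpha\cap U_\beta$ to agree and be patched into a global sequence of blow-ups $\sigma\colon \widehat X\to X$ with smooth centers contained in $X_\sing$ and in the successive exceptional loci.

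To secure the precise conclusions of the theorem, I would argue as follows. By the functoriality of the construction, the centers lie over $X_\sing$, so $\sigma$ is an isomorphism above $X_\reg$; in particular $\sigma\colon \widehat X\setminus E\to X_\reg$ is a biholomorphism where $E:=\sigma^{-1}(X_\sing)$. That $E$ has simple normal crossings is built into the embedded resolution procedure: at every stage the center is chosen to have normal crossings with the exceptional divisor accumulated so far, so the final total transform is an SNC divisor. If $X$ is irreducible then $X_\reg$ is connected (normality is not needed, only irreducibility), hence its dense open preimage $\widehat X\setminus E$ is connected, and therefore so is $\widehat X$; since $\sigma$ is bimeromorphic, $\dim\widehat X=\dim X$.

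The genuinely hard part, as in Hironaka's original argument, is the construction of the invariant $\mathrm{inv}_X$ and the verification that it strictly drops under blow-up along its maximum locus; this is exactly the content I would import wholesale from \cite{BM97}. Everything else in the statement (properness of $\sigma$, the SNC property of $E$, connectedness, and preservation of dimension) then follows formally from the structure of the iterated blow-up and the irreducibility hypothesis.
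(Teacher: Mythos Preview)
Your proposal is appropriate: the paper does not give its own proof of this theorem but simply cites it from \cite[Theorem 13.2]{BM97}, so your plan to defer to that reference, while sketching the Bierstone--Milman invariant and the functorial blow-up procedure, is exactly in line with the paper's treatment. Your added remarks on why $E$ is SNC, why $\sigma$ is an isomorphism over $X_\reg$, and why irreducibility forces $\widehat X$ to be connected with $\dim\widehat X=\dim X$ are correct and go slightly beyond what the paper records.
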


The second one is Hironaka's embedded resolution of singularities theorem 
(see e.{\ke}g.\  \cite[Theorems 10.7 and 1.6]{BM97}, \cite[Theorem 2.1.13]{MM07}). 

\begin{Theorem}[Hironaka]\label{T:H2} 
Let $X$ be a complex manifold of dimension $n$, and $A\subset X$ be a compact analytic subset of $X$. Then there exist a complex manifold $\widetilde X$ and a surjective holomorphic map $\sigma:\wi X\to X$, given as the composition of finitely many blow-ups with smooth center, with the following properties:

(i) $E=\sigma^{-1}(A_\sing)$ is a divisor in $\wi X$, and $\sigma:\wi X\setminus E\to X\setminus A_\sing$ is a biholomorphism.

(ii) The strict transform $A'=\overline{\sigma^{-1}(A_\reg)}$ is smooth and $A'$, $E$ simultaneously have only normal crossings.
\end{Theorem}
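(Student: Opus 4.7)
The plan is to follow the standard algorithmic approach to embedded resolution, as in Hironaka's original proof or the streamlined modern treatments due to Bierstone--Milman and Villamayor. The core idea is to attach to each point $x\in A$ a numerical invariant $\mathrm{inv}_X(A,x)$ taking values in a well-ordered set, which is upper semi-continuous in $x$, attains its maximum on a smooth (or smoothable) locus $C$, and drops strictly under blow-up along $C$. Iterating, one reaches the minimal value of the invariant in finitely many steps, at which point $A$ becomes simultaneously smooth and has only normal crossings with the accumulated exceptional divisor.

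First I would reduce to a local analytic problem. Near a singular point $x\in A_\sing$ choose local coordinates on $X$ and let $\cI\subset\cO_{X,x}$ be the ideal of $A$. The simplest invariant is the order of vanishing $\ord(\cI,x)$; for a complete proof one takes the Bierstone--Milman invariant, a lexicographic refinement encoding orders of iterated ``coefficient ideals'' along a descending sequence of hypersurfaces of maximal contact. The key structural input is the local existence of a smooth hypersurface $H\subset X$ containing the top-order locus of $\cI$, a so-called \emph{hypersurface of maximal contact}; this reduces the resolution problem for $A$ to a resolution problem inside $H$, of strictly smaller ambient dimension. One then proceeds by induction on $n=\dim X$, the base case $n=1$ being trivial because analytic subsets of a Riemann surface are already smooth after isolated blow-ups.

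Once the invariant is set up, I would check that its maximum locus $C\subset X$ is a closed smooth analytic subset which, by being constructed inductively inside the accumulated hypersurfaces of maximal contact, has only normal crossings with the exceptional divisor $E$ built up so far. Blowing up $C$ produces $\sigma_1:X_1\to X$ with new exceptional divisor $E_1=\sigma_1^{-1}(E\cup C)$, which again has only normal crossings by the choice of $C$. The main technical theorem of Hironaka then asserts that on the controlled transform $A_1$ of $A$, the maximum value of $\mathrm{inv}_{X_1}(A_1,\LargerCdot)$ is strictly less than the old maximum in the lexicographic order. Since values lie in a well-ordered set, the process terminates after finitely many blow-ups and yields a composition $\sigma=\sigma_N\circ\cdots\circ\sigma_1:\wi X\to X$ satisfying (i) and (ii).

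The hard part is precisely the construction and verification of the invariant: upper semi-continuity, the existence and sufficient canonicity of hypersurfaces of maximal contact, functoriality of the invariant under the controlled transform, and the strict drop of $\mathrm{inv}$ after each blow-up. These ingredients form the technical heart of Hironaka's theorem and occupy the bulk of modern treatments. For the purposes of this paper I would simply quote the result from \cite[Theorems 10.7 and 1.6]{BM97} rather than reproduce that argument, since the surrounding development in Subsection \ref{SS:divisorization} uses only the statement as a black box in order to prove Proposition \ref{P:divisorization}.
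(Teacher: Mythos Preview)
Your conclusion matches the paper exactly: Theorem \ref{T:H2} is stated in the paper without proof, with a reference to \cite[Theorems 10.7 and 1.6]{BM97} and \cite[Theorem 2.1.13]{MM07}, and is used purely as a black box in the proof of Proposition \ref{P:divisorization}. Your outline of the Bierstone--Milman invariant and the blow-up induction is a reasonable high-level description of how such a proof goes, but the paper does not attempt any of it; so your final sentence---quote the result and move on---is precisely what the paper does.
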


If $A=A_1\cup\ldots\cup A_m$ has irreducible components $A_j$, and $A_j'$ is the strict transform of $A_j$, it follows from Theorem \ref{T:H2} that $A_j'$ are pairwise disjoint connected submanifolds of $\wi X$. Performing blow-ups of $\wi X$ with centers $A_j'$, for all $j$ with $\dim A_j\leq n-2$, one obtains the following version of Theorem \ref{T:H2} (see also \cite[Theorem 2.1]{CM15}):

\begin{Theorem}\label{T:H3} 
Let $X$ be a complex manifold of dimension $n$, and $A\subset X$ be a compact analytic subset of $X$ with irreducible components $A_1,\ldots,A_m$. Then there exist a complex manifold $\widetilde X$ and a surjective holomorphic map $\sigma:\wi X\to X$, given as the composition of finitely many blow-ups with smooth center, with the following properties:

(i) If $Y=A_\sing\cup\,\bigcup\{A_j:\,\dim A_j\leq n-2\}$ then $E=\sigma^{-1}(Y)$ is the final exceptional divisor, $\sigma:\wi X\setminus E\to X\setminus Y$ is a biholomorphism, and $E$ has only normal crossings.

(ii) There exist (connected) smooth complex hypersurfaces $\wi A_1,\ldots,\wi A_m$ in $\wi X$, which have only normal crossings, such that $\sigma(\wi A_j)=A_j$. Moreover, if $\dim A_j=n-1$ then $\wi A_j$ is the final strict transform of $A_j$, and  if $\dim A_j\leq n-2$ then $\wi A_j$ is an irreducible component of $E$.

(iii) If $F\to X$ is a holomorphic line bundle and $S\in H^0(X,F)$ then $\ord(S,A_j)=\ord(\sigma^\star S,\wi A_j)$, for all $j=1,\ldots,m$.
\end{Theorem}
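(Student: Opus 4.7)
The plan is to apply Theorem \ref{T:H2} to $(X,A)$ to obtain a first resolution $\sigma_1\colon\wi X_1\to X$ for which $E_1=\sigma_1^{-1}(A_\sing)$ is a normal crossings divisor, $\sigma_1$ is a biholomorphism off $A_\sing$, and the strict transforms $A_j'=\overline{\sigma_1^{-1}(A_j\setminus A_\sing)}$ are smooth and pairwise disjoint, with $E_1\cup\bigcup_j A_j'$ simultaneously having only normal crossings. After relabelling, assume $\dim A_j\leq n-2$ for $1\leq j\leq m_0$ and $\dim A_j=n-1$ for $m_0<j\leq m$. Since the low-dimensional strict transforms $A_1',\ldots,A_{m_0}'$ are smooth and pairwise disjoint, their union is a smooth subvariety of $\wi X_1$, and I would let $\sigma_2\colon\wi X\to\wi X_1$ be its blow-up and set $\sigma:=\sigma_1\circ\sigma_2$.

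Next I would define the hypersurfaces $\wi A_j$. For $j\leq m_0$, let $\wi A_j$ be the (irreducible, connected) component of the exceptional divisor of $\sigma_2$ lying over $A_j'$; it is a $\P^{n-\dim A_j-1}$-bundle over $A_j'$, hence a smooth hypersurface in $\wi X$. For $j>m_0$, the center of each blow-up in $\sigma_2$ lies in some $A_k'$ with $k\leq m_0$, which is disjoint from $A_j'$; therefore $\sigma_2$ is a biholomorphism on a neighborhood of $A_j'$, and $\wi A_j:=\sigma_2^{-1}(A_j')$ is the final strict transform of $A_j$, a smooth hypersurface in $\wi X$. The fact that $\bigcup_j\wi A_j$ together with the exceptional divisor has only normal crossings follows from the standard principle that blowing up a smooth center which has normal crossings with a given normal crossings divisor preserves the normal crossings property, applied iteratively (or, here, in one step since the centers are disjoint).

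For (i), note that $Y=A_\sing\cup\bigcup_{j\leq m_0}A_j$, so $\sigma_1^{-1}(Y)=E_1\cup\bigcup_{j\leq m_0}A_j'$ set-theoretically; under $\sigma_2$ the low-dimensional centers become exceptional divisors, and $E=\sigma^{-1}(Y)$ is the resulting divisor, with normal crossings by the previous paragraph. That $\sigma\colon\wi X\setminus E\to X\setminus Y$ is a biholomorphism follows since both $\sigma_1$ and $\sigma_2$ are isomorphisms over the relevant loci. For (iii), I would treat the two cases separately: if $\dim A_j=n-1$, $\sigma$ restricts to an isomorphism from a neighborhood of the generic point of $\wi A_j$ onto a neighborhood of the generic point of $A_j$, so the vanishing orders coincide. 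If $\dim A_j\leq n-2$, the identity $\ord(S,A_j)=\ord(\sigma^\star S,\wi A_j)$ is an instance of the general fact that if one blows up a smooth subvariety $Z$ with ideal $\cI_Z$, then $\sigma_2^\star\cI_Z$ coincides, locally up to a unit, with the ideal of the exceptional divisor; applying this with $Z=A_j'$, one sees that the order of $\sigma_1^\star S$ as a section whose germ at the generic point of $A_j'$ lies in $\cI_{A_j'}^k$ transports exactly to the vanishing order of $\sigma^\star S$ along $\wi A_j$.

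The main obstacle, in my view, is the verification of (iii) for components of codimension $\geq 2$, because $\ord(S,A_j)$ must first be interpreted as the largest $k$ with (germ of) $S$ in $\cI_{A_j}^k$ at the generic point of $A_j$, then transported through $\sigma_1$ (which is an isomorphism on a Zariski-open subset of $A_j$ meeting the generic point) to an analogous order along $A_j'$, and finally related to the vanishing along the exceptional divisor by the pullback-of-ideal computation for a blow-up. A secondary bookkeeping issue is making sure that the subsequent blow-ups in $\sigma_2$ do not disturb either the hypersurface strict transforms $\wi A_j$ with $j>m_0$ or the normal crossings configuration; both points rely crucially on the pairwise disjointness of the $A_k'$ furnished by Theorem \ref{T:H2}.
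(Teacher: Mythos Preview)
Your proposal is correct and follows essentially the same approach as the paper: apply Theorem~\ref{T:H2} to obtain pairwise disjoint smooth strict transforms $A_j'$, then blow up those of codimension $\geq 2$. Your treatment of (iii) in the codimension $\geq 2$ case via the pullback-of-ideal property of a blow-up is more explicit than the paper's, which simply appeals to ``the local description of the blow-up map,'' but the underlying argument is the same.
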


\begin{proof} The existence of $\wi X$ and $\sigma$ with properties $(i)$-$(ii)$ follows directly from Theorem \ref{T:H2}, as previously described. Property $(iii)$ clearly holds for $j$ with $\dim A_j=n-1$, since $\wi A_j=\overline{\sigma^{-1}(A_j\setminus Y)}$ is the final strict transform of $A_j$ and $\sigma:\wi X\setminus E\to X\setminus Y$ is a biholomorphism. If $j$ is such that $\dim A_j\leq n-2$ and $A_j''\subset\wi X_1$ is the strict transform of $A_j$ produced in Theorem \ref{T:H2}, then $\wi A_j=\pi^{-1}(A_j'')$, where $\pi:\wi X\to\wi X_1$ is the blow-up of $\wi X_1$ with center $A_j''$. So property $(iii)$ follows easily from the local description of the blow-up map $\pi$.
\end{proof}

\begin{proof}[Proof of Proposition \ref{P:divisorization}]
Let $\widehat\sigma:\widehat X\to X$ be a desingularization of $X$ as in Theorem \ref{T:H1}, let $\widehat E=\widehat\sigma^{-1}(X_\sing)$ be the exceptional divisor, and $\widehat\Sigma_j$ be the strict transform of $\Sigma_j$. Hence $\widehat\sigma:\widehat X\setminus\widehat E\to X_\reg$ is a biholomorphism. Since $\Sigma_j\not\subset X_\sing$ we have that $\widehat\Sigma_j\neq\emptyset$ and $\dim\widehat\Sigma_j=\dim\Sigma_j$. Moreover $\widehat\Sigma_j$ is irreducible since $\Sigma_j$ is. Note that $\widehat\Sigma_j\subset\widehat\Sigma_k$ if and only if $\Sigma_j\subset\Sigma_k$.

We next apply Theorem \ref{T:H3} repeatedly, starting with $\widehat X,\widehat\Sigma_1,\ldots,\widehat\Sigma_\ell$, as follows. Let $\widehat\Sigma_{j_1},\ldots,\widehat\Sigma_{j_k}$ be the minimal elements of $\{\widehat\Sigma_1,\ldots,\widehat\Sigma_\ell\}$ with respect to inclusion. We apply Theorem \ref{T:H3} to $\widehat X$ and $A=\widehat\Sigma_{j_1}\cup\ldots\cup\widehat\Sigma_{j_k}$, to obtain a map $\sigma_1:\widehat X_1\to\widehat X$ verifying properties $(i)$-$(iii)$. Let $\widehat\Sigma_j'$ be the strict transform of $\widehat\Sigma_j$ by $\sigma_1$, for $j\in\{1,\ldots,\ell\}\setminus\{j_1,\ldots,j_k\}$, and note that $\widehat\Sigma_j'\subset\widehat\Sigma_k'$ if and only if $\widehat\Sigma_j\subset\widehat\Sigma_k$. We now apply Theorem \ref{T:H3} to $\widehat X_1$ and the analytic subset given by the union of the minimal elements of 
\[\big\{\widehat\Sigma_j':\,j\in\{1,\ldots,\ell\}\setminus\{j_1,\ldots,j_k\}\big\}\] 
with respect to inclusion, to obtain a map $\sigma_2:\widehat X_2\to\widehat X_1$ verifying properties $(i)$-$(iii)$. Repeating this procedure finitely many times we resolve all of the sets $\Sigma_j$. Finally, we apply Theorem \ref{T:H3} one more time in order to make the resulting smooth hypersurfaces $\wi\Sigma_1,\ldots,\wi\Sigma_\ell$ and the final exceptional divisor (including the preimage of $\widehat E$) simultaneously have only normal crossings. Taking the composition of the maps $\sigma_j$ we obtain a compact complex manifold $\wi X$ and a surjective holomorphic map $\sigma:\wi X\to\widehat X$, given as the composition of finitely many blow-ups with smooth center, with the following properties: 

$(a)$ There exists an analytic set $\widehat Y\subset\widehat E\cup\bigcup_{j=1}^\ell\widehat\Sigma_j$ such that $\dim\widehat Y\leq n-2$, $E_0:=\sigma^{-1}(\widehat Y)$ is a divisor in $\wi X$ with only normal crossings, and $\sigma:\wi X\setminus E_0\to \widehat X\setminus\widehat Y$ is a biholomorphism.

$(b)$ There exist smooth complex hypersurfaces $\wi\Sigma_j\subset\wi X$, $1\leq j\leq\ell$, which have only normal crossings, such that $\sigma(\wi\Sigma_j)=\widehat\Sigma_j$. Moreover, if $\dim\widehat\Sigma_j=n-1$ then $\wi\Sigma_j$ is the final strict transform of $\widehat\Sigma_j$ by $\sigma$, and  if $\dim\widehat\Sigma_j\leq n-2$ then $\wi\Sigma_j$ is an irreducible component of $E_0$. 

$(c)$ If $\widehat F\to\widehat X$ is a holomorphic line bundle and $S\in H^0(\widehat X,\widehat F)$ then $\ord(S,\widehat\Sigma_j)=\ord(\sigma^\star S,\wi\Sigma_j)$, for all $j=1,\ldots,\ell$.

\smallskip

We define $\pi:=\widehat\sigma\circ\sigma:\wi X\to X$, and set $Y:=X_\sing\cup\widehat\sigma(\widehat Y)$. Since $\widehat\sigma(\widehat Y)$ is an analytic subset of $X$ of dimension $\leq n-2$, we have $\dim Y\leq n-2$. 

By $(b)$, if $\dim\widehat\Sigma_j\leq n-2$ then $\wi\Sigma_j\subset E_0$, so $\widehat\Sigma_j=\sigma(\wi\Sigma_j)\subset\widehat Y$ and $\Sigma_j=\widehat\sigma(\widehat\Sigma_j)\subset\widehat\sigma(\widehat Y)\subset Y$. Since $\widehat Y\subset\widehat E\cup\bigcup_{j=1}^\ell\widehat\Sigma_j$, we have $\widehat\sigma(\widehat Y)\subset X_\sing\cup\bigcup_{j=1}^\ell\Sigma_j$, hence $Y\subset X_\sing\cup\,\bigcup_{j=1}^\ell\Sigma_j$. Moreover,
\[\widehat E\cup\widehat Y\subset\widehat\sigma^{-1}(X_\sing)\cup\widehat\sigma^{-1}(\widehat\sigma(\widehat Y))=\widehat\sigma^{-1}(Y)\,,\]
\[\widehat\sigma^{-1}(\widehat\sigma(\widehat Y))\subset\widehat E\cup\widehat\sigma^{-1}\big(\widehat\sigma(\widehat Y)\setminus X_\sing\big)=\widehat E\cup(\widehat Y\setminus\widehat E)=\widehat E\cup\widehat Y\,.\]
Thus $\widehat\sigma^{-1}(Y)=\widehat E\cup\widehat Y$. Let $\widehat E'$ be the strict transform of $\widehat E$ by $\sigma$. It is easy to see that $\sigma^{-1}\big(\widehat\sigma^{-1}(Y)\big)=\widehat E'\cup E_0$. Thus $E:=\widehat E'\cup E_0$ is a divisor in $\wi X$ that has only normal crossings and $E=\pi^{-1}(Y)$. Since $\sigma:\wi X\setminus E_0\to \widehat X\setminus\widehat Y$, $\widehat\sigma:\widehat X\setminus\widehat E\to X_\reg$ are biholomorphisms and $X_\sing\subset Y$, $\widehat Y\subset\widehat\sigma^{-1}(Y)$, we conclude that $\pi:\wi X\setminus E\to X\setminus Y$ is a biholomorphism, so property $(i)$ of Proposition \ref{P:divisorization} is satisfied. 

Properties $(ii)$-$(iii)$ of Proposition \ref{P:divisorization} follow easily from $(b)$ and $(c)$, since $\pi(\wi\Sigma_j)=\widehat\sigma(\widehat\Sigma_j)=\Sigma_j$. Moreover, since $\widehat\Sigma_j$ is the strict transform of $\Sigma_j\not\subset X_\sing$ by $\widehat\sigma$ we have $\ord(S,\Sigma_j)=\ord(\widehat\sigma^\star S,\widehat\Sigma_j)$, for any $S\in H^0(X,F)$ and $j=1,\ldots,\ell$.
\end{proof}

Proposition \ref{P:divisorization} has the following corollary which will be needed later, for the proof of Theorem \ref{T:main1}:

\begin{Corollary}\label{C:iso} 
Let $X,L,\Sigma,\tau$ verify assumptions (A)-(D) and let 
$(\wi X,\pi,\wi\Sigma)$ be a divisorization of $(X,\Sigma)$. 
Then $H^0_0(X,L^p,\Sigma,\tau)\cong H^0_0(\wi X,\pi^\star L^p,\wi\Sigma,\tau)$ 
for all $p\geq1$.
\end{Corollary}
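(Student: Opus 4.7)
The natural candidate for the isomorphism is pullback: define
\[
\Phi_p\colon H^0_0(X,L^p,\Sigma,\tau)\longrightarrow H^0_0(\wi X,\pi^\star L^p,\wi\Sigma,\tau),\qquad \Phi_p(S)=\pi^\star S.
\]
First I would check this is well defined: if $S\in H^0(X,L^p)$ then $\pi^\star S\in H^0(\wi X,\pi^\star L^p)$, and by property (iii) of Proposition \ref{P:divisorization} one has $\ord(\pi^\star S,\wi\Sigma_j)=\ord(S,\Sigma_j)\geq t_{j,p}$, so the target vanishing conditions are satisfied. Injectivity is immediate since $\pi$ is surjective: $\pi^\star S\equiv 0$ forces $S\equiv 0$ on the dense subset $\pi(\wi X\setminus E)=X\setminus Y$, hence on all of $X$ by continuity.

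The main content is surjectivity. Fix $\wi S\in H^0_0(\wi X,\pi^\star L^p,\wi\Sigma,\tau)$. Since by Proposition \ref{P:divisorization}(i) the map $\pi\colon\wi X\setminus E\to X\setminus Y$ is a biholomorphism, the section $(\pi^{-1})^\star(\wi S|_{\wi X\setminus E})$ defines a holomorphic section $S_0$ of $L^p$ over the open set $X\setminus Y$. The key step is to extend $S_0$ across $Y$ to a global section of $L^p$ on $X$. This is where the hypotheses on $X$ and $Y$ are used decisively: $X$ is normal and, by Proposition \ref{P:divisorization}(i), $Y$ is an analytic subset of $X$ with $\codim_X Y\geq 2$. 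The second Riemann extension theorem for normal complex spaces then guarantees that holomorphic functions on $X\setminus Y$ that are locally bounded near $Y$ extend uniquely across $Y$; applied in local trivializations of $L^p$, and using that $|S_0|_{h^p}$ stays bounded near $Y$ because $S_0$ is the restriction of a section defined on the resolution, this yields a unique holomorphic section $S\in H^0(X,L^p)$ with $S|_{X\setminus Y}=S_0$.

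To verify that $S$ is a preimage of $\wi S$, I would note that $\pi^\star S$ and $\wi S$ are two holomorphic sections of $\pi^\star L^p$ on $\wi X$ that agree on the dense open set $\wi X\setminus E$, hence agree everywhere by the identity principle. Finally I would check the vanishing: by Proposition \ref{P:divisorization}(iii),
\[
\ord(S,\Sigma_j)=\ord(\pi^\star S,\wi\Sigma_j)=\ord(\wi S,\wi\Sigma_j)\geq t_{j,p},\qquad 1\leq j\leq \ell,
\]
so $S\in H^0_0(X,L^p,\Sigma,\tau)$ and $\Phi_p(S)=\wi S$. This proves $\Phi_p$ is bijective.

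The only delicate point is the extension of $S_0$ across $Y$; all other steps are formal. The boundedness needed to invoke normal extension is not automatic from $S_0$ alone on $X\setminus Y$, so one must really use that $S_0=(\pi^{-1})^\star \wi S$ comes from a globally defined holomorphic section on the compact manifold $\wi X$, which is what supplies the local boundedness near $Y$ in any trivialization of $L^p$.
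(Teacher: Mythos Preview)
Your proof is correct and follows essentially the same approach as the paper: both construct the isomorphism via $\pi^\star$, with inverse given by restricting to $\wi X\setminus E$, pushing down to $X\setminus Y$, and extending across $Y$ using normality of $X$ and $\codim_X Y\geq 2$; the vanishing conditions are matched via Proposition~\ref{P:divisorization}(iii). One small remark: your discussion of local boundedness is superfluous, since on a normal complex space every holomorphic function on the complement of an analytic subset of codimension $\geq 2$ extends holomorphically \emph{without} any boundedness hypothesis (this is precisely the second Riemann extension property characterizing normality, and is what the paper invokes directly).
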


\begin{proof} Fix $p\geq1$. The map $\pi$ induces a linear map 
$\pi^\star:H^0(X,L^p)\to H^0(\wi X,\pi^\star L^p)$, $S\to\pi^\star S$. 
We can define a linear map $\pi_\star:H^0(\wi X,\pi^\star L^p)\to H^0(X,L^p)$ 
as follows: if $\wi S\in H^0(\wi X,\pi^\star L^p)$, set $\pi_\star\wi S=S$, 
where $S:=(\pi^{-1})^\star(\wi S\mid_{\wi X\setminus E})\in 
H^0(X\setminus Y,L^p\mid_{X\setminus Y})$ extends to a section in 
$H^0(X,L^p)$ since $X$ is normal and $\dim Y\leq n-2$ \cite[p.\ 143]{GR84}. 
Since $\pi:\wi X\setminus E\to X\setminus Y$ is a biholomorphism, it follows that 
$\pi_\star=(\pi^\star)^{-1}$. Proposition \ref{P:divisorization} $(iii)$ implies that 
$\pi^\star(H^0_0(X,L^p,\Sigma,\tau))\subset H^0_0(\wi X,\pi^\star L^p,\wi\Sigma,\tau)$. 
Moreover, if $\wi S\in H^0_0(\wi X,\pi^\star L^p,\wi\Sigma,\tau)$ then 
$\wi S=\pi^\star\pi_\star\wi S$, hence 
$\ord(\wi S,\wi\Sigma_j)=\ord(\pi_\star\wi S,\Sigma_j)$, so 
$\pi_\star\wi S\in H^0_0(X,L^p,\Sigma,\tau)$. 
Thus $\pi^\star(H^0_0(X,L^p,\Sigma,\tau))=
H^0_0(\wi X,\pi^\star L^p,\wi\Sigma,\tau)$.
\end{proof}

\begin{Remark} 
In hypothesis (D), we make the natural assumption that $\tau_j>\tau_k$, 
for every $j,k\in\{1,\ldots,\ell\}$ with $\Sigma_j\subset\Sigma_k$. 
We note that Corollary \ref{C:iso} is in fact valid for {\it every} $\ell$-tuple 
$\tau$ of positive real numbers. Suppose that $\Sigma_\ell\subset\Sigma_k$ and 
$\tau$ is such that $\tau_\ell<\tau_k$. Set 
$\Sigma'=(\Sigma_1,\ldots,\Sigma_{\ell-1})$, $\tau'=(\tau_1,\ldots,\tau_{\ell-1})$, 
and note that if $(\wi X,\pi,\wi\Sigma)$ 
is a divisorization of $(X,\Sigma)$ then $(\wi X,\pi,\wi\Sigma')$ is a divisorization 
of $(X,\Sigma')$, where $\wi\Sigma'=(\wi\Sigma_1,\ldots,\wi\Sigma_{\ell-1})$. 
Clearly, $H^0_0(X,L^p,\Sigma,\tau)=H^0_0(X,L^p,\Sigma',\tau')$. 
By Corollary \ref{C:iso}, we have that 
$H^0_0(\wi X,\pi^\star L^p,\wi\Sigma,\tau)=
H^0_0(\wi X,\pi^\star L^p,\wi\Sigma',\tau')$.
\end{Remark}

\subsection{Proofs of Theorems \ref{T:main0} and \ref{T:main1}}\label{SS:Tbig}

Theorem \ref{T:main0} will follow from Theorem \ref{T:big} below. 
Let $X$ be a compact complex manifold of dimension $n$, $\Sigma_j\subset X$ 
be irreducible complex hypersurfaces, and let $\tau_j>0$, where $1\leq j\leq\ell$. 
Let $L$ be a holomorphic line bundle over $X$ and consider the following:
\begin{equation}
\begin{split}
&E_p:=L^p\otimes\bigotimes_{j=1}^\ell\cO_X(-\lfloor\tau_jp\rfloor\Sigma_j)\,,\,\;
V_p:=H^0(X,E_p)\,,   \\
&F_p:=L^p\otimes\bigotimes_{j=1}^\ell\cO_X(-t_{j,p}\Sigma_j)\,,\,\;W_p:=H^0(X,F_p)\,,
\end{split}
\end{equation}
where $t_{j,p}$ are defined in \eqref{e:tjp}. Note that $V_p$ is isomorphic 
to the space of sections in  $H^0(X,L^p)$ that vanish to order 
$\lfloor\tau_jp\rfloor$ along $\Sigma_j$, $1\leq j\leq\ell$, while $W_p$ 
is isomorphic to the space $H^0_0(X, L^p)$ defined in \eqref{e:H00}. 
Clearly, $\dim W_p\leq\dim V_p$.

\begin{Theorem}\label{T:big}
In the above setting, the following are equivalent:

(i) $\displaystyle\limsup_{p\to\infty}p^{-n}\dim V_p>0$;

(ii) $\displaystyle\limsup_{p\to\infty}p^{-n}\dim W_p>0$;

(iii) There exists a singular Hermitian metric $h$ on $L$ such that 
$c_1(L,h)-\sum_{j=1}^\ell\tau_j[\Sigma_j]$ is a K\"ahler current on $X$;

(iv) There exist $p_0\in\N$ and $c>0$ such that $\dim V_p\geq cp^n$ for all $p\geq p_0$;

(v) There exist $p_0\in\N$ and $c>0$ such that $\dim W_p\geq cp^n$ for all $p\geq p_0$.
\end{Theorem}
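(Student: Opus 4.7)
My plan is to reduce to the two substantial implications $(iii)\Rightarrow(v)$ and $(i)\Rightarrow(iii)$ by disposing of the trivial cases first. Since $t_{j,p}\geq\lfloor\tau_j p\rfloor$ by \eqref{e:tjp}, every section in $W_p$ also lies in $V_p$, so $\dim W_p\leq\dim V_p$; this yields $(v)\Rightarrow(iv)$ and $(ii)\Rightarrow(i)$, while $(iv)\Rightarrow(i)$ and $(v)\Rightarrow(ii)$ are tautological.

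For $(iii)\Rightarrow(v)$, write $T:=c_1(L,h)-\sum_j\tau_j[\Sigma_j]\geq\varepsilon\omega$ and perturb $h$ to nudge each Lelong number from $\tau_j$ up to $\tau_j+c$ for a small $c>0$; the extra room $c$ will promote the multiplier-ideal vanishing from $\lfloor\tau_jp\rfloor$ to $t_{j,p}$. Fix smooth Hermitian metrics $h_j$ on $\cO_X(\Sigma_j)$ with canonical sections $s_j$, and set $h':=h\cdot\prod_j|s_j|_{h_j}^{-2c}$. By Poincar\'e--Lelong,
\[c_1(L,h')-\sum_j(\tau_j+c)[\Sigma_j]=T-c\sum_j c_1(\cO_X(\Sigma_j),h_j),\]
which remains a K\"ahler current provided $c$ is small relative to $\varepsilon$ and the smooth forms $c_1(\cO_X(\Sigma_j),h_j)$. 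After invoking Demailly's regularization to assume $h'$ has analytic singularities (losing negligible positivity), Bonavero's singular holomorphic Morse inequalities produce $\dim H^0(X,L^p\otimes\cI(h'^p))\geq Cp^n$ for $p$ large. A local computation at a smooth point of $\Sigma_j\setminus\bigcup_{k\neq j}\Sigma_k$ (where $h'$ has Lelong number exactly $\tau_j+c$) shows that any $s\in\cI(h'^p)$ satisfies $\ord(s,\Sigma_j)\geq\lfloor p(\tau_j+c)\rfloor\geq t_{j,p}$ once $p\geq 2/c$; hence $H^0(X,L^p\otimes\cI(h'^p))\subset W_p$ and $(v)$ follows.

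For $(i)\Rightarrow(iii)$ I would extract a subsequence $p_k\to\infty$ with $\dim V_{p_k}\geq cp_k^n$ and consider the Fubini--Study currents $\gamma_{p_k}=\Phi_{p_k}^\star\omega_\FS$ attached to the Kodaira maps $\Phi_{p_k}:X\dashrightarrow\P(V_{p_k}^\star)$; these are positive closed currents cohomologous to $p_kc_1(L)$. Every section in $V_{p_k}$ vanishes to order $\geq\lfloor\tau_jp_k\rfloor$ along $\Sigma_j$, so factoring out a local equation of $\Sigma_j$ at a smooth point yields $\gamma_{p_k}\geq\sum_j\lfloor\tau_jp_k\rfloor[\Sigma_j]$. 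The normalized currents $p_k^{-1}\gamma_{p_k}$ have uniformly bounded mass, so a subsequence converges weakly to a positive closed current $T$ cohomologous to $c_1(L)$; Siu's decomposition combined with the lower bound at generic smooth points of $\Sigma_j$ then gives $T\geq\sum_j\tau_j[\Sigma_j]$.

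The main obstacle is promoting the residual positive current $R:=T-\sum_j\tau_j[\Sigma_j]\geq 0$ to a K\"ahler current, which is precisely what (iii) demands. My plan is to leverage the quantitative growth $\dim V_p\geq cp^n$: after subtracting the base contribution $\sum_j\lfloor\tau_jp_k\rfloor[\Sigma_j]$ from $\gamma_{p_k}$, the resulting positive closed currents represent the class $\alpha:=c_1(L)-\sum_j\tau_jc_1(\cO_X(\Sigma_j))$, and their asymptotic top self-intersection numbers (non-pluripolar volumes) are bounded below by $n!c$ via a Fujita-type approximation. The characterization of big cohomology classes by positive volume \cite{Bo04} then forces $\alpha$ to contain a K\"ahler current $T'=\beta+dd^c\psi$ with $\beta\in\alpha$ smooth and $\psi$ quasi-psh, which lifts to a singular metric on $L$ with weight $\psi+\sum_j\tau_j\log|s_j|_{h_j}$ (relative to the smooth reference chosen so that $c_1(L,h_0)=\beta+\sum_j\tau_jc_1(\cO_X(\Sigma_j),h_j)$), yielding $c_1(L,h)-\sum_j\tau_j[\Sigma_j]=T'$ as required in (iii).
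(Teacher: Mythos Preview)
Your argument for $(iii)\Rightarrow(v)$ is essentially correct and close in spirit to the paper's $(iii)\Rightarrow(iv)$: both pass through Demailly regularization and Bonavero's singular Morse inequalities. The paper regularizes the $\theta$-psh potential of $c_1(L,h)-\sum_j\tau_j[\Sigma_j]$ first and then adds back $\sum_j r_{j,k}\log\sigma_j$ with rational $r_{j,k}>\tau_j$, which makes the Lelong numbers along $\Sigma_j$ explicit; you instead perturb $\tau_j\to\tau_j+c$ before regularizing. Just be aware that Demailly's approximation may \emph{decrease} Lelong numbers, so after regularizing $h'$ you only get Lelong number $\geq\tau_j+c-\delta$ along $\Sigma_j$; this is harmless once you take $\delta<c/2$, but it should be said.

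Your $(i)\Rightarrow(iii)$, however, has a genuine gap at the step you flag yourself. The Fubini--Study current argument only produces a positive closed current $R$ in the class $\alpha=c_1(L)-\sum_j\tau_j c_1(\cO_X(\Sigma_j))$, i.e.\ shows $\alpha$ is pseudo-effective. Your appeal to ``Fujita-type approximation'' to get $\operatorname{vol}(\alpha)\geq n!c$ from $\dim V_{p_k}\geq cp_k^n$ is not justified as stated: the $E_{p_k}$ are not powers of a fixed line bundle when the $\tau_j$ are irrational, and even for $\Q$-divisors the link between $\dim H^0(X,E_p)$ and $\operatorname{vol}(\alpha)$ requires comparing $E_p$ to $\lfloor p\alpha\rfloor$ via exact sequences (the same $O(p^{n-1})$ argument the paper uses for $(i)\Rightarrow(ii)$). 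Moreover, $X$ is only assumed compact complex, so invoking the volume characterization of bigness from \cite{Bo04} and Fujita approximation for $\R$-classes needs care; at minimum you should first observe that $L$ big forces $X$ Moishezon and pass to a projective modification---which is exactly the reduction the paper performs.

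The paper's route for this direction is quite different and more elementary. It proves $(ii)\Rightarrow(iii)$ by first reducing (via exact sequences and Siegel's lemma applied to a fixed ample divisor $A$) to the existence of a nonzero section of $F_p\otimes\cO_X(-A)$ for some large $p$; this section yields an effective divisor $D_p$ with $L^p=\cO_X(A)\otimes\cO_X(D_p)\otimes\bigotimes_j\cO_X(t_{j,p}\Sigma_j)$, from which the desired singular metric on $L$ is written down \emph{explicitly} as a product of the positive metric on $\cO_X(A)$ and the canonical singular metrics on the other factors. No volume theory or weak limits are needed. The non-projective case is handled by Moishezon's theorem, pulling back to a projective modification and pushing the metric back down. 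If you want to salvage your approach, you will end up reproducing most of these ingredients anyway.
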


\begin{proof} Let $\omega$ be a Hermitian form on $X$. 

$(i)\Rightarrow(ii)$ There exist a constant $c>0$ and a sequence of natural numbers 
$p_k\nearrow\infty$ such that $\dim V_{p_k}\geq cp_k^n$ for all $k\geq1$. 
Let us fix $k$ and assume that $\tau_1p_k\not\in\N$. Consider the short exact sequence 
\[0\longrightarrow E_{p_k}\otimes\cO_X(-\Sigma_1)\longrightarrow 
E_{p_k}\longrightarrow E_{p_k}\mid_{\Sigma_1}\longrightarrow0\,,\]
which gives the exact sequence
\[0\longrightarrow H^0(X,E_{p_k}\otimes\cO_X(-\Sigma_1))\longrightarrow 
H^0(X,E_{p_k})\longrightarrow H^0(\Sigma_1,E_{p_k}\mid_{\Sigma_1})
\longrightarrow\ldots\,.\]
It follows that 
\[\dim H^0(X,E_{p_k})\leq\dim H^0(X,E_{p_k}\otimes\cO_X(-\Sigma_1))+
\dim H^0(\Sigma_1,E_{p_k}\mid_{\Sigma_1})\,.\]
By Siegel's lemma applied to the analytic subset $\Sigma_1$ 
(see Lemma \ref{L:Siegel} following this proof), there exists a constant $c'>0$ such that 
\[\dim H^0(\Sigma_1,E_p\mid_{\Sigma_1})\leq
\dim H^0(\Sigma_1,L^p\mid_{\Sigma_1})\leq 
c'p^{n-1}\,,\,\;\forall\,p\geq1\,.\]
Hence $\dim H^0(X,E_{p_k}\otimes\cO_X(-\Sigma_1))\geq 
cp_k^n-c'p_k^{n-1}$. If $\tau_2p_k\not\in\N$ we repeat the above argument 
working with the short exact sequence 
\[0\longrightarrow E_{p_k}\otimes\cO_X(-\Sigma_1)\otimes\cO_X(-\Sigma_2)
\longrightarrow E_{p_k}\otimes\cO_X(-\Sigma_1)\longrightarrow
E_{p_k}\otimes\cO_X(-\Sigma_1)\mid_{\Sigma_2}\longrightarrow0\,,\]
and so on. This yields $(ii)$. 

$(ii)\Rightarrow(iii)$ We proceed in two steps.

{\it Step 1.} Assume that $X$ is projective. We fix a smooth ample divisor 
$A\subset X$ and consider, as above, the exact sequence
\[0\longrightarrow H^0(X,F_p\otimes\cO_X(-A))\longrightarrow 
W_p=H^0(X,F_p)\longrightarrow H^0(A,F_p\mid_A)\longrightarrow\ldots\,.\]
Since by Siegel's lemma \cite[Lemma 2.2.6] {MM07}, 
\[\dim H^0(A,F_p\mid_A)\leq\dim H^0(A,L^p\mid_A)=O(p^{n-1})\,,\]
it follows by $(ii)$ that there exist $c>0$ and $p_k\nearrow\infty$ such that 
\[\dim H^0(X,F_{p_k}\otimes\cO_X(-A))\geq cp_k^n\,.\]
We fix such a $p=p_k$. Since $H^0(X,F_p\otimes\cO_X(-A))$ is nontrivial, 
there exists an effective divisor $D_p\subset X$ such that $F_p\otimes\cO_X(-A)=\cO_X(D_p)$. 
Hence 
\[L^p=\cO_X(A)\otimes\cO_X(D_p)\otimes\bigotimes_{j=1}^\ell\cO_X(t_{j,p}\Sigma_j)\,.\]
Let $h_A$ be a smooth positive metric on $\cO_X(A)$, and let $h_{D_p}$, resp.\ $h_{\Sigma_j}$, 
be the metric induced on $\cO_X(D_p)$, resp.\ on $\cO_X(\Sigma_j)$, 
by the canonical section of $\cO_X(D_p)$, resp.\ of $\cO_X(\Sigma_j)$, so that
\[c_1(\cO_X(D_p),h_{D_p})=[D_p]\,,\,\;c_1(\cO_X(\Sigma_j),h_{\Sigma_j})=[\Sigma_j]\,.\]
We define the metric 
$h_p:=h_A\otimes h_{D_p}\otimes\bigotimes_{j=1}^\ell h_{\Sigma_j}^{\otimes t_{j,p}}$ 
on $L^p$, and we let $h=h_p^{1/p}$ be the induced metric on $L$. Then 
\[c_1(L,h)=\frac{1}{p}\,\big(\omega_0+[D_p]+\sum_{j=1}^\ell t_{j,p}[\Sigma_j]\big)\,,\]
where $\omega_0=c_1(\cO_X(A),h_A)$ is a K\"ahler form on $X$. Since $t_{j,p}\geq\tau_jp$ we get 
\[c_1(L,h)-\sum_{j=1}^\ell\tau_j[\Sigma_j]=
\frac{1}{p}\,(\omega_0+[D_p])+\sum_{j=1}^\ell\left(\frac{t_{j,p}}{p}-
\tau_j\right)[\Sigma_j]\geq\frac{1}{p}\,\omega_0\,,\]
which proves $(iii)$ in the case when $X$ is projective. 

{\it Step 2.} In the general case when $X$ is a compact complex manifold 
we have by $(ii)$ that $\limsup_{p\to\infty}p^{-n}\dim H^0(X,L^p)>0$, 
hence $L$ is a big line bundle and $X$ is Moishezon 
(see e.{\ke}g.\ \cite[Theorem 2.2.15]{MM07}). 
By a theorem of Moishezon (see e.{\ke}g.\ \cite[Theorem 2.2.16]{MM07}), 
there exists a projective manifold $\wi X$ and a surjective holomorphic map 
$\sigma:\wi{X}\to{X}$, given as the composition of finitely many blow-ups with 
smooth center, such that $\sigma:\wi X\setminus E\to X\setminus Y$ 
is a biholomorphism, where $Y\subset X$ is an analytic subset with $\dim Y\leq n-2$, 
and $E=\sigma^{-1}(Y)$ is the final exceptional divisor. Let $\Sigma_j'$ 
be the strict transform of $\Sigma_j$ by $\sigma$, and note that if $S\in H^0(X,L^p)$ 
then $\sigma^\star S\in H^0(\wi X,\sigma^\star L^p)$ and 
$\ord(S,\Sigma_j)=\ord(\sigma^\star S,\Sigma_j')$, $1\leq j\leq\ell$. 
It follows that 
\[W_p\cong W_p':=H^0\Big(\wi X,\sigma^\star L^p\otimes
\bigotimes_{j=1}^\ell\cO_{\wi X}(-t_{j,p}\Sigma_j')\Big),\,\;\text{ so }\,
\limsup_{p\to\infty}\frac{\dim W_p'}{p^n}>0\,.\]  

We fix a Hermitian form $\wi\omega$ on $\wi X$. 
Then $\sigma^\star\omega+\wi\omega$ is a Hermitian form on $\wi X$, 
and by Step 1 there exists a singular Hermitian metric $h^\star$ on $\sigma^\star L$ such that 
\begin{equation}\label{e:big1}
c_1(\sigma^\star L,h^\star)-\sum_{j=1}^\ell\tau_j[\Sigma_j']\geq
\varepsilon(\sigma^\star\omega+\wi\omega)\geq\varepsilon\sigma^\star\omega\,,
\end{equation}
for some constant $\varepsilon>0$. The metric 
$h=(\sigma^{-1})^\star h^\star$ on $L\mid_{X\setminus Y}$ 
extends to a metric on $L$ as follows. If $U$ is a coordinate ball centered 
at $x\in Y$ and $e_U$ is a frame of $L\mid_U$, then $\sigma^\star e_U$ 
is a frame of $\sigma^\star L\mid_{\sigma^{-1}(U)}$. 
Let $|\sigma^\star e_U|_{h^\star}=e^{-\varphi^\star}$, 
where $\varphi^\star\in \PSH(\sigma^{-1}(U))$. 
The function $\varphi=\varphi^\star\circ\sigma ^{-1}$ is psh on $U\setminus Y$, 
so it extends to a psh function on $U$ since $\dim Y\leq n-2$, 
and we set $|e_U|_h=e^{-\varphi}$. 

We have that $\sigma_\star c_1(\sigma^\star L,h^\star)=c_1(L,h)$ on $X\setminus Y$, 
and hence on $X$ by the support theorem. Similarly $\sigma_\star[\Sigma_j']=[\Sigma_j]$, 
and $\sigma_\star\sigma^\star\omega=\omega$ since $\omega$ is a smooth form. 
By \eqref{e:big1} it follows that 
$c_1(L,h)-\sum_{j=1}^\ell\tau_j[\Sigma_j]\geq\varepsilon\omega$, 
which proves $(iii)$. 

$(iii)\Rightarrow(iv)$ Let $h$ be a singular metric on $L$ such that 
$R:=c_1(L,h)-\sum_{j=1}^\ell\tau_j[\Sigma_j]\geq3\varepsilon\omega$, 
for some constant $\varepsilon>0$. We fix a smooth metric $h_0$ on $L$, 
set $\alpha=c_1(L,h_0)$, and write $h=h_0e^{-2\psi}$, 
where $\psi\in\PSH(X,\alpha)$ since $c_1(L,h)=\alpha+dd^c\psi\geq0$. 
Let $g_j$ be a smooth metric on $\cO_X(\Sigma_j)$, $s_{\Sigma_j}$ 
be the canonical section of $\cO_X(\Sigma_j)$, and set 
\[\sigma_j:=|s_{\Sigma_j}|_{g_j}\,,\,\;\beta_j=
c_1(\cO_X(\Sigma_j),g_j)\,,\,\;\theta=\alpha-\sum_{j=1}^\ell\tau_j\beta_j.\]
By the Lelong-Poincar\'e formula, $[\Sigma_j]=\beta_j+dd^c\log\sigma_j$. 
Hence $R=\theta+dd^c\psi'$, where $\psi'=\psi-\sum_{j=1}^\ell\tau_j\log\sigma_j$. 
The function $\psi'\in L^1(X,\omega^n)$ is defined everywhere on 
$X\setminus\big(\cup_{j=1}^\ell\Sigma_j\big)$. 
Since $R\geq0$ it follows that $\psi'=u$ a.{\ke}e.\ on $X$, 
for some function $u\in\PSH(X,\theta)$, hence everywhere on 
$X\setminus\big(\cup_{j=1}^\ell\Sigma_j\big)$ since both $\psi',u$ are qpsh there. 
Thus $\psi'$ extends to a $\theta$-psh function on $X$. 

Applying Demailly's regularization theorem \cite{D92} to $\psi'$, 
it follows that there exists a qpsh function $\varphi$ with algebraic singularities on $X$ 
(i.{\ke}e. $\varphi=c\log(\sum_{k=1}^m|f_j|^2)+\chi$ locally on $X$, 
where $c\in\Q$, $c>0$, $f_k$ are holomorphic functions and $\chi$ is a smooth function) 
such that $T:=\theta+dd^c\varphi\geq2\varepsilon\omega$. Note that $\{\varphi=-\infty\}$ 
is an analytic set. 

We take sequences $\{r_{j,k}\}_{k\geq1}\subset\Q$ such that 
$r_{j,k}\searrow\tau_j$ as $k\to\infty$, and we consider the qpsh functions
\[\psi_k=\varphi+\sum_{j=1}^\ell r_{j,k}\log\sigma_j\,,\] 
with algebraic singularities in $Z:=\{\varphi=-\infty\}\cup\bigcup_{j=1}^\ell\Sigma_j$. 
Then
\[\begin{split}
T&= \alpha+dd^c\big(\varphi+
\sum_{j=1}^\ell r_{j,k}\log\sigma_j\big)-\sum_{j=1}^\ell r_{j,k}(\beta_j+
dd^c\log\sigma_j)+\sum_{j=1}^\ell(r_{j,k}-\tau_j)\beta_j \\
 &=\alpha+dd^c\psi_k-\sum_{j=1}^\ell r_{j,k}[\Sigma_j]+
 \sum_{j=1}^\ell(r_{j,k}-\tau_j)\beta_j \,.
\end{split}\]
There exists a constant $C>0$ such that $\beta_j\leq C\omega$ 
for $j=1,\ldots,\ell$. We obtain that 
\[\alpha+dd^c\psi_k= T+\sum_{j=1}^\ell r_{j,k}[\Sigma_j]-
\sum_{j=1}^\ell(r_{j,k}-\tau_j)\beta_j\geq 2\varepsilon\omega-
C\Big(\sum_{j=1}^\ell(r_{j,k}-\tau_j)\Big)\omega\geq\varepsilon\omega\,,\]
if $k$ is chosen sufficiently large.

With $k$ fixed as above, we now define the singular metric 
$h_k=h_0e^{-2\psi_k}$ on $L$, which has algebraic singularities in $Z$. 
Moreover, $c_1(L,h_k)=\alpha+dd^c\psi_k\geq\varepsilon\omega$. 
Let $H^0_{(2)}(X,L^p,h_k^p,\omega^n)$ be the space 
of $L^2$ holomorphic sections of $L^p$ with respect to the metric 
$h_k^p$ on $L^p$ and the volume form $\omega^n$ on $X$. 
Bonavero's singular holomorphic Morse inequalities \cite{Bon93} 
(see also \cite[Theorem 2.3.18]{MM07}) imply that 
\[\dim H^0_{(2)}(X,L^p,h_k^p,\omega^n)\geq\frac{p^n}{n!}\,
\int_{X\setminus Z}c_1(L,h_k)^n+o(p^n)\geq \frac{\varepsilon^np^n}{n!}\,\int_X\omega^n
+o(p^n)\,.\]
Since $\lfloor r_{j,k}p\rfloor\geq\lfloor\tau_j p\rfloor$, it follows from 
the definition of $\psi_k$ that $H^0_{(2)}(X,L^p,h_k^p,\omega^n)\subset V_p$, 
hence $(iv)$ holds. 

$(iv)\Rightarrow(v)$ This follows by the same argument as the one 
in the proof of $(i)\Rightarrow(ii)$. 

$(v)\Rightarrow(i)$ This is obvious since $\dim W_p\subset\dim V_p$.
\end{proof}

\begin{Lemma}\label{L:Siegel}
Let $A$ be a compact irreducible analytic subset of a complex manifold $M$ 
and let $k=\dim A$. If $F$ is a holomorphic line bundle over $A$ then there exists 
a constant $C>0$ depending on $A,M,F$, such that $\dim H^0(A,F^p)\leq Cp^k$ 
for all $p\geq1$. 
\end{Lemma}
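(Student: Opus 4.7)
The plan is to reduce to the case when $A$ is smooth via a Hironaka desingularization, and then to bound the dimension of the space of sections by counting $m$-jets at a single point for $m$ comparable to $p$.

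First, applying Theorem \ref{T:H1} to the compact reduced irreducible complex space $A$ yields a connected compact complex manifold $\widehat A$ of dimension $k$ and a surjective holomorphic map $\sigma : \widehat A \to A$ that restricts to a biholomorphism over $A_\reg$. Pullback of sections gives a linear map $\sigma^\star : H^0(A, F^p) \to H^0(\widehat A, \sigma^\star F^p)$, which is injective: any $s \in H^0(A, F^p)$ with $\sigma^\star s = 0$ must vanish on the dense open subset $A_\reg$, and hence on all of $A$ since $A$ is reduced. It therefore suffices to prove the desired bound for $G := \sigma^\star F$ on the smooth compact complex manifold $\widehat A$.

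Fix a point $x_0 \in \widehat A$ together with local coordinates and a local trivialization of $G$ near $x_0$. For each $m \in \N$, consider the linear $m$-jet map $j_{x_0}^m$ sending $s \in H^0(\widehat A, G^p)$ to the collection of Taylor coefficients of order $\le m$ of its local representative at $x_0$; its image lies in a vector space of dimension $\binom{m+k}{k} = O(m^k)$. The crucial claim is that there exists a constant $C_0 > 0$, independent of $p$, such that $j_{x_0}^m$ is injective for $m = \lfloor C_0 p \rfloor$. Granting this,
\[ \dim H^0(\widehat A, G^p) \,\le\, \binom{\lfloor C_0 p \rfloor + k}{k} \,\le\, C p^k \]
for a suitable constant $C > 0$, which is the desired estimate.

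The main obstacle is therefore to prove this injectivity, i.e.\ that a section $s \in H^0(\widehat A, G^p)$ vanishing to order $\lfloor C_0 p \rfloor + 1$ at $x_0$ is identically zero. This is the classical Siegel lemma on compact complex manifolds (Lemma 2.2.6 of \cite{MM07}) and is proved by a Schwarz lemma propagated by the Hadamard three-circle theorem. Concretely, fix a smooth Hermitian metric $h$ on $G$, and cover $\widehat A$ by finitely many pairs of concentric coordinate balls $U_\alpha \Subset V_\alpha$ on which $G$ is trivialized. A section $s$ normalized by $\sup_{\widehat A} |s|_{h^p} = 1$ corresponds on each $V_\alpha$ to a holomorphic function of sup-norm at most $C_1^p$, where $C_1$ depends only on the trivialization weights and the metric $h$. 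High-order vanishing at $x_0$ gives $|s|_{h^p} \le C_1^p \varepsilon_0^{m+1}$ on a small fixed ball around $x_0$ by Schwarz, and iterating three-circle estimates along a finite chain of overlapping balls covering $\widehat A$ propagates this to a global bound $\sup_{\widehat A} |s|_{h^p} \le C_2^p \varepsilon^{m}$ for some fixed $\varepsilon \in (0,1)$. Choosing $C_0$ so large that $C_2 \varepsilon^{C_0} < 1$ produces a contradiction with the normalization for $p$ large, forcing $s \equiv 0$.
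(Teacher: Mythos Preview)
Your proof is correct and follows essentially the same strategy as the paper: desingularize $A$, observe that pullback of sections is injective, and invoke Siegel's lemma \cite[Lemma 2.2.6]{MM07} on the resulting smooth compact manifold. The only cosmetic differences are that the paper uses the embedded resolution Theorem \ref{T:H2} (taking the strict transform $A'\subset\widetilde M$) rather than the abstract resolution Theorem \ref{T:H1} of $A$ itself, and simply cites Siegel's lemma instead of sketching its Schwarz/three-circle proof as you do.
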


\begin{proof} By Theorem \ref{T:H2}, there exists a complex manifold 
$\widetilde M$ and a surjective holomorphic map $\sigma:\wi M\to M$, 
given as the composition of finitely many blow-ups with smooth center, 
such that $E=\sigma^{-1}(A_\sing)$ is a divisor in $\wi M$, 
$\sigma:\wi M\setminus E\to M\setminus A_\sing$ is a biholomorphism, 
and the strict transform $A'=\overline{\sigma^{-1}(A_\reg)}$ 
is a connected $k$-dimensional complex submanifold of $\wi M$. 
The restriction $\sigma:A'\to A$ is a surjective holomorphic map and 
the induced map $\sigma^\star:H^0(A,F^p)\to H^0(A',\sigma^\star F^p)$ 
is injective. So $\dim H^0(A,F^p)\leq\dim H^0(A',\sigma^\star F^p)$ 
and the lemma follows from Siegel's lemma \cite[Lemma 2.2.6] {MM07} 
applied to $A'$ and $\sigma^\star F$.
\end{proof}

\begin{proof}[Proof of Theorem \ref{T:main1}]  
$(i)\Rightarrow(ii)$ By Corollary \ref{C:iso} we have that 
\[H^0_0(X,L^p,\Sigma,\tau)\cong 
H^0_0(\wi X,\pi^\star L^p,\wi\Sigma,\tau)\,,\,\;\forall\,p\geq1\,,\] 
hence $(\pi^\star L,\wi\Sigma,\tau)$ is big and $(ii)$ follows 
from Theorem \ref{T:main0}.

$(ii)\Rightarrow(iii)$ This is obvious.

$(iii)\Rightarrow(iv)$ By Theorem \ref{T:main0} 
to $\wi X,\pi^\star L,\wi\Sigma,\tau$, there exist $p_0\in\N$ and 
$c>0$ such that $\dim H^0_0(\wi X,\pi^\star L^p,\wi\Sigma,\tau)\geq cp^n$ 
for all $p\geq p_0$. Hence $(iv)$ follows using Corollary \ref{C:iso}.

$(iv)\Rightarrow(i)$ This is obvious by Definition \ref{D:big}.
\end{proof}

Theorem \ref{T:main1} has the following interesting corollary:

\begin{Corollary}\label{C:big}
Let $X,L,\Sigma,\tau$ verify assumptions (A)-(D), and suppose that 
$\dim\Sigma_j=n-1$ for all $j=1,\ldots,\ell$. Let $\Sigma_j'\subset\Sigma_j$ 
be distinct irreducible proper analytic subsets such that 
$\Sigma_j'\not\subset X_\sing$, and let $\Sigma'=
(\Sigma_1,\ldots,\Sigma_\ell,\Sigma_1',\ldots,\Sigma_\ell')$, $\tau'=
(\tau_1,\ldots,\tau_\ell,\tau_1+\delta,\ldots,\tau_\ell+\delta)$, 
where $\delta>0$. If $(L,\Sigma,\tau)$ is big then $(L,\Sigma',\tau')$ 
is big for $\delta>0$ sufficiently small.
\end{Corollary}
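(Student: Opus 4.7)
The plan is to prove bigness of $(L,\Sigma',\tau')$ by constructing, on a common divisorization of $(X,\Sigma\cup\Sigma')$, a singular Hermitian metric on $\pi^\star L$ whose curvature minus the prescribed divisor currents dominates a K\"ahler form, and then invoking Theorem \ref{T:main1}. The construction proceeds in two stages: first a base metric with vanishing order $\tau_j$ along both $\wi\Sigma_j$ and $\wi\Sigma_j'$, and then a small $\delta$-perturbation along the $\wi\Sigma_j'$.

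The crucial preliminary observation is that $\Sigma_j'\subset\Sigma_j$ gives the ideal containment $\cI_{\Sigma_j}\subset\cI_{\Sigma_j'}$, so every $S\in H^0_0(X,L^p,\Sigma,\tau)$ automatically satisfies $\ord(S,\Sigma_j')\geq\ord(S,\Sigma_j)\geq t_{j,p}$. Therefore
\[
H^0_0(X,L^p,\Sigma,\tau)=H^0_0(X,L^p,(\Sigma,\Sigma'),(\tau,\tau)),
\]
and the triplet $(L,(\Sigma,\Sigma'),(\tau,\tau))$ is big. Fix a divisorization $(\wi X,\pi,(\wi\Sigma,\wi\Sigma'))$ of $(X,\Sigma\cup\Sigma')$ via Proposition \ref{P:divisorization}: the $\wi\Sigma_j$ are the strict transforms of the $\Sigma_j$, each $\wi\Sigma_j'$ is an irreducible component of the exceptional divisor, and all $2\ell$ of these are smooth hypersurfaces with only normal crossings, hence pairwise incomparable under inclusion. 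The extended form of Corollary \ref{C:iso} (valid for arbitrary positive weights, as noted in the Remark following it) yields
\[
H^0_0(\wi X,\pi^\star L^p,(\wi\Sigma,\wi\Sigma'),(\tau,\tau))\cong H^0_0(X,L^p,(\Sigma,\Sigma'),(\tau,\tau)),
\]
so the left-hand triplet is big on $\wi X$. Since hypothesis (D) is vacuous in this setting, Theorem \ref{T:main0} supplies a singular Hermitian metric $\tilde h^\star$ on $\pi^\star L$ and $\varepsilon>0$ with
\[
T:=c_1(\pi^\star L,\tilde h^\star)-\sum_{j=1}^\ell\tau_j[\wi\Sigma_j]-\sum_{j=1}^\ell\tau_j[\wi\Sigma_j']\geq\varepsilon\wi\omega,
\]
for a fixed Hermitian form $\wi\omega$ on $\wi X$.

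For the $\delta$-increment, fix smooth Hermitian metrics $g_j'$ on $\cO_{\wi X}(\wi\Sigma_j')$ with canonical sections $\wi s_j'$ and Chern forms $\beta_j':=c_1(\cO_{\wi X}(\wi\Sigma_j'),g_j')$, and set
\[
h^\star_\delta:=\tilde h^\star\cdot\prod_{j=1}^\ell|\wi s_j'|_{g_j'}^{-2\delta}.
\]
By the Lelong-Poincar\'e formula, $c_1(\pi^\star L,h^\star_\delta)=c_1(\pi^\star L,\tilde h^\star)+\delta\sum_{j=1}^\ell\bigl([\wi\Sigma_j']-\beta_j'\bigr)$; substituting gives
\[
c_1(\pi^\star L,h^\star_\delta)-\sum_j\tau_j[\wi\Sigma_j]-\sum_j(\tau_j+\delta)[\wi\Sigma_j']=T-\delta\sum_j\beta_j'.
\]
Choosing $C>0$ with $\beta_j'\leq C\wi\omega$ for every $j$ (possible since $\wi X$ is compact), the right-hand side is at least $(\varepsilon-C\ell\delta)\wi\omega$, which is a K\"ahler current whenever $0<\delta<\varepsilon/(2C\ell)$. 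Theorem \ref{T:main1}, implication (iii)$\Rightarrow$(i), applied to $(L,\Sigma',\tau')$ with the above divisorization and metric $h^\star_\delta$, then concludes that $(L,\Sigma',\tau')$ is big for every such $\delta$.

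The only substantive step is the ideal-inclusion argument that transfers the vanishing order $\tau_j$ from $\Sigma_j$ to $\Sigma_j'$ at no cost; once this base metric is in place, the remaining $\delta$-increment is a standard small-perturbation of a K\"ahler current, exploiting the openness of that condition in the space of closed $(1,1)$-currents.
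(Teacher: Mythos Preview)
Your proof is correct and proceeds along a genuinely different line from the paper's.

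The paper first obtains, from bigness of $(L,\Sigma,\tau)$, a metric $h^\star$ on $\pi^\star L$ with K\"ahler-current property relative to the $\wi\Sigma_j$ alone; it then pushes the resulting current $T$ down to $X$, pulls back, and uses that $\pi^\star[\Sigma_j]\geq[\wi\Sigma_j]+[\wi\Sigma_j']$ (since $\pi(\wi\Sigma_j')=\Sigma_j'\subset\Sigma_j$) to pick up the $\tau_j$-pole along $\wi\Sigma_j'$ for free. The additional $\delta$ then comes from the negativity of the whole exceptional divisor $E$: one twists by $|s_E|_g^{-2\delta}$ and invokes the standard fact that $\pi^\star\omega-\delta_0\,c_1(\cO_{\wi X}(E),g)$ is Hermitian for small $\delta_0$. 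Your argument bypasses both the push-down/pull-back step and the exceptional-divisor trick: the ideal containment $\cI_{\Sigma_j}\subset\cI_{\Sigma_j'}$ already gives bigness of $(L,(\Sigma,\Sigma'),(\tau,\tau))$, so Theorem~\ref{T:main0} on $\wi X$ (where (D) is vacuous) directly produces a metric with the full $\tau_j$-pole along each $\wi\Sigma_j'$; the extra $\delta$ is then a bare perturbation by $\prod_j|\wi s_j'|^{-2\delta}$, controlled only by a bound $\beta_j'\leq C\wi\omega$. Your route is shorter and more elementary; the paper's route makes the divisorial mechanism behind the $\delta$ more explicit and ties its size to the geometry of $E$ rather than to the arbitrary Chern forms $\beta_j'$.
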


\begin{proof} Without loss of generality we may assume that $X$ 
is a complex manifold, by first desingularizing $X$ if necessary using 
Theorem \ref{T:H1} and applying Corollary \ref{C:iso} to the map $\sigma$ from 
Theorem \ref{T:H1} and the strict transforms of $\Sigma_j,\Sigma_j'$ by $\sigma$. 
Let $\omega$ be a Hermitian form on $X$. 

Let $(\wi X,\pi,\wi\Sigma')$ be a divisorization of $(X,\Sigma')$, 
so $\pi:\wi X\setminus E\to X\setminus Y$ is a biholomorphism, 
where $E=\pi^{-1}(Y)$ is the final exceptional divisor, $\wi\Sigma_j'$ 
are irreducible components of $E$, and $\wi\Sigma_j$ is the strict transform 
of $\Sigma_j$ by $\pi$.  Let $\wi\omega$ be a Hermitian form on 
$\wi X$ such that $\wi\omega\geq\pi^\star\omega$. 
By Theorem \ref{T:main1}, there exists a singular Hermitian metric 
$h^\star$ on $\pi^\star L$ such that 
\[T:=c_1(\pi^\star L,h^\star)-
\sum_{j=1}^\ell\tau_j[\wi\Sigma_j]\geq\varepsilon\wi\omega\geq
\varepsilon\pi^\star\omega\,,\]
for some constant $\varepsilon>0$. 
As argued in the proof of Theorem \ref{T:big} (Step 2 of the implication 
$(ii)\Rightarrow(iii)$), the metric $h=(\pi^{-1})^\star h^\star$ on $L\mid_{X\setminus Y}$ 
extends to a metric on $L$. Moreover, 
$\pi_\star c_1(\pi^\star L,h^\star)=
c_1(L,h)$, $\pi_\star[\wi\Sigma_j]=[\Sigma_j]$, and 
$\pi_\star\pi^\star\omega=\omega$. We conclude that 
\[S:=\pi_\star T=c_1(L,h)-\sum_{j=1}^\ell\tau_j[\Sigma_j]\geq\varepsilon\omega\,.\]

It is well known that there exists a smooth Hermitian metric $g$ 
on the line bundle $\cO_{\wi X}(E)$ and a constant $\delta_0>0$ 
such that $\wi\omega_0:=\pi^\star\omega-\delta_0\Theta$ 
is a Hermitian form on $\wi X$, where $\Theta=c_1(\cO_{\wi X}(E),g)$ 
(see e.{\ke}g.\ \cite[Lemma 2.2]{CMM17}). If $s_E$ is the canonical section 
of $\cO_{\wi X}(E)$ then by the Lelong-Poincar\'e formula, $[E]=\Theta+dd^c\log|s_E|_g$. 

Let $\delta=\varepsilon\delta_0$. Then $\pi^\star S-
\delta\Theta\geq\varepsilon\pi^\star\omega-\varepsilon\delta_0\Theta=
\varepsilon\wi\omega_0$. We introduce the singular Hermitian metric 
$\wi h=|s_E|_g^{-2\delta}\pi^\star h$ on $\pi^\star L$, so 
$c_1(\pi^\star L,\wi h)=c_1(\pi^\star L,\pi^\star h)+\delta dd^c\log|s_E|_g$. 
Since $\Sigma_j'\subset\Sigma_j$ we have $\pi^\star [\Sigma_j]=
[\wi\Sigma_j]+[\wi\Sigma_j']+R_j$, where $R_j$ 
are positive closed currents of bidegree $(1,1)$ supported in $E$. 
It follows that 
\begin{align*}
\pi^\star S-\delta\Theta&=c_1(\pi^\star L,\pi^\star h)-
\sum_{j=1}^\ell\tau_j\pi^\star [\Sigma_j]-\delta\big([E]-dd^c\log|s_E|_g\big) \\
&=c_1(\pi^\star L,\wi h)-\sum_{j=1}^\ell\tau_j[\wi \Sigma_j]-
\sum_{j=1}^\ell(\tau_j+\delta)[\wi \Sigma_j']-R\,,
\end{align*}
where $R$ is a positive closed current of bidegree $(1,1)$ supported in $E$. Thus 
\[c_1(\pi^\star L,\wi h)-\sum_{j=1}^\ell\tau_j[\wi \Sigma_j]-
\sum_{j=1}^\ell(\tau_j+\delta)[\wi \Sigma_j']=
\pi^\star S-\delta\Theta+R\geq\varepsilon\wi\omega_0\,,\]
hence $(L,\Sigma',\tau')$ is big by Theorem \ref{T:main1}.
\end{proof}

\section{Envelopes of qpsh functions with poles along a divisor}\label{S:env}
In this Section we define the relevant spaces qpsh functions with poles along a divisor
and prove the regularity theorem for their upper enveloppes.
Let $(X,\omega)$ be a compact Hermitian manifold of dimension 
$n$, $\Sigma_j\subset X$ be irreducible complex hypersurfaces, and 
let $\tau_j>0$, where $1\leq j\leq\ell$. We write 
$\Sigma=(\Sigma_1,\ldots,\Sigma_\ell)$, $\tau=(\tau_1,\ldots,\tau_\ell)$, 
and we denote by $\dist$ the distance on $X$ induced by $\omega$.

Let $\alpha$ be a smooth closed real $(1,1)$-form on $X$. 
We fix a smooth Hermitian metric $g_j$ on $\cO_X(\Sigma_j)$, 
let $s_{\Sigma_j}$ be the canonical section of $\cO_X(\Sigma_j)$, $1\leq j\leq\ell$, and set 
\begin{equation}\label{e:bts}
\beta_j=c_1(\cO_X(\Sigma_j),g_j)\,,\,\;\theta=
\alpha-\sum_{j=1}^\ell\tau_j\beta_j\,,\,\;\sigma_j:=
|s_{\Sigma_j}|_{g_j}\,.
\end{equation}
We let 
\begin{equation}\label{e:calL}
\mathcal{L}(X,\alpha,\Sigma,\tau)=\{\psi\in\PSH(X,\alpha):\,
\nu(\psi,x)\geq\tau_j,\,\forall\,x\in\Sigma_j,\,1\leq j\leq\ell\}\,
\end{equation}
be the class of $\alpha$-psh functions with logarithmic poles of order $\tau_j$ 
along $\Sigma_j$. Given a function $\varphi:X\to\R\cup\{-\infty\}$ we consider 
the following subclasses of qpsh functions and their upper envelopes: 
\begin{align}
\mathcal{A}(X,\alpha,\Sigma,\tau,\varphi)&=
\{\psi\in\mathcal{L}(X,\alpha,\Sigma,\tau):\,\psi\leq\varphi\text{ on }X\}\,, \label{e:calA} \\
\mathcal{A}'(X,\alpha,\Sigma,\tau,\varphi)&=
\Big\{\psi'\in\PSH(X,\theta):\,\psi'\leq\varphi-
\sum_{j=1}^\ell\tau_j\log\sigma_j\text{ on }
X\setminus\bigcup_{j=1}^\ell\Sigma_j\Big\}\,,  \label{e:calA'} \\
\varphi_\eq=\varphi_{\eq,\Sigma,\tau}&=
\sup\{\psi:\,\psi\in\mathcal{A}(X,\alpha,\Sigma,\tau,\varphi)\}\,,  \label{e:enveq} \\
\varphi_\req=\varphi_{\req,\Sigma,\tau}&=
\sup\{\psi':\,\psi'\in\mathcal{A}'(X,\alpha,\Sigma,\tau,\varphi)\}\,.  \label{e:envreq}
\end{align}

The function $\varphi_\eq$ defined in \eqref{e:enveq} is the largest 
$\alpha$-psh function dominated by $\varphi$ and with logarithmic poles of order 
$\tau_j$ along $\Sigma_j$. We call $\varphi_\eq$ the {\em equilibrium envelope of 
$(\alpha,\Sigma,\tau,\varphi)$}, and $\varphi_\req$ the 
{\em reduced equilibrium envelope of $(\alpha,\Sigma,\tau,\varphi)$}. 
This is motivated by the terminology of {\em equilibrium metric} 
used in the case when $\varphi$ is the weight of a singular metric 
$h=h_0e^{-2\varphi}$ on a Hermitian holomorphic line bundle $(F,h_0)$ over $X$ (see below).

Extremal psh functions on domains in Stein manifolds with poles along subvarieties 
(also known as pluricomplex Green functions) are studied in \cite{LS99}, \cite{RS05}. 
In particular, pluricomplex Green functions with finitely many poles were studied 
by many authors. In the context of metrics on line bundles over compact complex manifolds, 
the above envelope method is introduced in \cite[Section 4.1]{Ber07} 
for defining equilibrium metrics with poles along a divisor. 
More generally, equilibrium metrics with prescribed singularities 
on a line bundle are introduced and studied in \cite{RWN17} (see also \cite[Theorem 3]{Dar17}). 

Our first result is concerned with some basic properties of the envelope 
defined in \eqref{e:enveq} under natural, very general assumptions.

\begin{Proposition}\label{P:envelopes}
Let $X,\Sigma,\tau,\alpha,\theta$ be as above, 
and let $\varphi:X\to\R\cup\{-\infty\}$ be an upper semicontinuous function. 
Then the following hold: 

(i) The mapping 
$\PSH(X,\theta)\ni\psi'\mapsto\psi:=\psi'+
\sum_{j=1}^\ell\tau_j\log\sigma_j\in\mathcal{L}(X,\alpha,\Sigma,\tau)$ 
is well defined and bijective, with inverse 
$\mathcal{L}(X,\alpha,\Sigma,\tau)\ni\psi\mapsto\psi':=
\psi-\sum_{j=1}^\ell\tau_j\log\sigma_j\in\PSH(X,\theta)$.

(ii) There exists a constant $C>0$ depending only on 
$X,\Sigma,\tau,\alpha,\theta$ such that $\sup_X\psi'\leq\sup_X\varphi+C$, 
for every $\psi'\in\mathcal{A}'(X,\alpha,\Sigma,\tau,\varphi)$. 

(iii) $\mathcal{A}(X,\alpha,\Sigma,\tau,\varphi)\neq\emptyset$ 
if and only if $\mathcal{A}'(X,\alpha,\Sigma,\tau,\varphi)\neq\emptyset$. 
Moreover, in this case we have that 
$\varphi_\req\in\mathcal{A}'(X,\alpha,\Sigma,\tau,\varphi)$, 
$\varphi_\eq\in\mathcal{A}(X,\alpha,\Sigma,\tau,\varphi)$, and
\begin{equation}\label{e:eqreq}
\varphi_\eq=\varphi_\req+\sum_{j=1}^\ell\tau_j\log\sigma_j\,\text{ on $X$.}
\end{equation}

(iv) If $\varphi$ is bounded and there exists a bounded 
$\theta$-psh function, then $\varphi_\req$ is bounded on $X$. 

(v) If $\PSH(X,\theta)\neq\emptyset$ and $\varphi_1,\varphi_2:X\to\R$ 
are bounded and upper semicontinuous, then 
\[\varphi_{1,\req}-\sup_X|\varphi_1-\varphi_2|\leq\varphi_{2,\req}\leq
\varphi_{1,\req}+\sup_X|\varphi_1-\varphi_2|\]
holds on $X$. Moreover, if $\varphi_1\leq\varphi_2$ then 
$\varphi_{1,\req}\leq\varphi_{2,\req}$.
\end{Proposition}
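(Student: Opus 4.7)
The proposition has five parts that build on each other, so I would prove them in order.

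For part (i), I would begin with the easier direction $\psi'\mapsto\psi=\psi'+\sum_j\tau_j\log\sigma_j$. Using the Lelong--Poincar\'e formula $dd^c\log\sigma_j=[\Sigma_j]-\beta_j$, a direct computation shows that $\alpha+dd^c\psi=\theta+dd^c\psi'+\sum_j\tau_j[\Sigma_j]\geq 0$, and since $\log\sigma_j$ has Lelong number $1$ along $\Sigma_j$ while $\psi'$ is qpsh, one reads off $\nu(\psi,x)\geq\tau_j$ for $x\in\Sigma_j$. The converse direction, $\psi\mapsto\psi'=\psi-\sum_j\tau_j\log\sigma_j$, is delicate because one must extend $\psi'$ across $\bigcup_j\Sigma_j$. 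I would invoke Siu's theorem on Lelong numbers: the hypothesis that $\nu(\psi,x)\geq\tau_j$ at \emph{every} point of $\Sigma_j$ forces the generic Lelong number along $\Sigma_j$ to be at least $\tau_j$, whence $\alpha+dd^c\psi\geq\sum_j\tau_j[\Sigma_j]$. Rewriting this via Lelong--Poincar\'e gives $\theta+dd^c(\psi-\sum_j\tau_j\log\sigma_j)\geq 0$ in the sense of currents on $X$. Since $\psi-\sum_j\tau_j\log\sigma_j$ lies in $L^1_{\loc}(X)$ (as a difference of qpsh functions), its upper semicontinuous regularization is a $\theta$-psh function that agrees with $\psi'$ outside $\bigcup_j\Sigma_j$.

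For part (ii), I would combine two ingredients. Choose once and for all a compact set $K\subset X\setminus\bigcup_j\Sigma_j$ with nonempty interior. On $K$ one has $\sigma_j\geq\delta_K>0$, so the defining inequality yields $\psi'\leq\sup_X\varphi+\sum_j\tau_j|\log\delta_K|$ on $K$. The second ingredient is the standard compactness of the normalized family $\{u\in\PSH(X,\theta):\sup_X u=0\}$ in $L^1(X,\omega^n)$, which implies the existence of a constant $C_K>0$ such that $\sup_X u\leq\sup_K u+C_K$ for every $u\in\PSH(X,\theta)$ (else a normalized sequence would converge to $-\infty$ on the interior of $K$, contradicting $L^1$-boundedness). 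Combining gives the claim.

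For part (iii), the bijection from (i) intertwines the two classes: $\psi\leq\varphi$ on $X$ is equivalent, on the full-measure complement of the divisors, to $\psi'\leq\varphi-\sum_j\tau_j\log\sigma_j$. This yields the nonemptiness equivalence and, after taking the pointwise sup, identity \eqref{e:eqreq}. To see that $\varphi_\req$ itself lies in $\mathcal{A}'$ (not merely its u.s.c.\ regularization), I would apply (ii) to get a uniform upper bound on the family, then use the Choquet-type result that the u.s.c.\ regularization of the sup of a locally uniformly bounded family of $\theta$-psh functions is $\theta$-psh, and observe that since $\varphi$ is u.s.c., the upper bound $\varphi-\sum_j\tau_j\log\sigma_j$ survives regularization. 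Part (iv) then follows easily: given a bounded $u_0\in\PSH(X,\theta)$, since $\varphi$ is bounded below and $-\log\sigma_j\geq-\log\sup_X\sigma_j$, the function $u_0-C$ lies in $\mathcal{A}'$ for $C$ sufficiently large, giving a lower bound on $\varphi_\req$; the upper bound comes from (ii). For part (v), with $M=\sup_X|\varphi_1-\varphi_2|$, any $\psi'\in\mathcal{A}'(\varphi_1)$ satisfies $\psi'-M\leq\varphi_2-\sum_j\tau_j\log\sigma_j$, hence $\psi'-M\in\mathcal{A}'(\varphi_2)$; taking sup and symmetrizing gives the two-sided estimate, while monotonicity is immediate from the inclusion $\mathcal{A}'(\varphi_1)\subset\mathcal{A}'(\varphi_2)$ when $\varphi_1\leq\varphi_2$.

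\textbf{Main obstacle.} The genuinely nontrivial step is the inverse map in (i): extending $\psi-\sum_j\tau_j\log\sigma_j$ across the divisors as a $\theta$-psh function. This is where Siu's semi-continuity theorem for Lelong numbers is essential—passing from pointwise Lelong number bounds at every point of $\Sigma_j$ to the current inequality $\alpha+dd^c\psi\geq\sum_j\tau_j[\Sigma_j]$—and then one must pass from a positive-current inequality to the existence of a well-behaved $\theta$-psh potential. The other subtlety, which appears in (iii), is verifying that the upper envelope is already u.s.c.\ and lies in $\mathcal{A}'$ rather than merely its regularization; this rests on upper semicontinuity of $\varphi$ together with the uniform upper bound from (ii).
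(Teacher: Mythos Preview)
Your proposal is correct and aligns closely with the paper's proof in parts (i), (iii), (iv), and (v); in particular, your identification of Siu's decomposition theorem as the key input for the inverse map in (i), and the use of upper semicontinuity of $\varphi$ to show $\varphi_\req^\star\in\mathcal{A}'$ in (iii), match the paper exactly.

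The one genuine methodological difference is in part (ii). The paper argues directly via the sub-mean-value inequality: it covers $X$ by finitely many coordinate balls $\B(x_k,r_k)$ with local potentials $\rho_k$ for $\theta$, applies the subaverage inequality to $\rho_k+\psi'$ on each ball, and then uses that $\sum_j\tau_j\log\sigma_j\in L^1_{\loc}$ to bound the integral of $\varphi-\sum_j\tau_j\log\sigma_j$ by $\sup_X\varphi$ plus a constant depending only on the geometric data. Your route instead fixes a compact $K$ away from the divisors, bounds $\psi'$ on $K$ directly, and then invokes the $L^1$-compactness of the normalized family $\PSH(X,\theta)\cap\{\sup_X u=0\}$ to propagate this to a global sup bound. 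Both work; the paper's argument is more self-contained (it does not appeal to the compactness theorem, which itself rests on similar mean-value estimates), while yours is shorter if one is willing to cite that compactness as a black box.
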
 

\begin{proof} $(i)$ If $\psi'\in\PSH(X,\theta)$ then $\psi=\psi'+
\sum_{j=1}^\ell\tau_j\log\sigma_j$ is qpsh and 
\[\alpha+dd^c\psi=\theta+dd^c\psi'+\sum_{j=1}^\ell\tau_j[\Sigma_j]\geq0\,,\]
so $\psi\in\PSH(X,\alpha)$. If $x\in\Sigma_j$ then $\nu(\psi,x)\geq
\tau_j\nu(\log\sigma_j,x)\geq\tau_j$, hence $\psi\in\mathcal{L}(X,\alpha,\Sigma,\tau)$. 

Conversely, if $\psi\in\mathcal{L}(X,\alpha,\Sigma,\tau)$, let 
$T=\alpha+dd^c\psi$. Since $\nu(T,x)\geq\tau_j$ for all $x\in\Sigma_j$ 
we have by Siu's decomposition theorem that $T'=T-\sum_{j=1}^\ell\tau_j[\Sigma_j]$ 
is a positive closed current. Moreover, $T'=\theta+dd^c\psi'$, 
where $\psi'=\psi-\sum_{j=1}^\ell\tau_j\log\sigma_j$. 
The function $\psi'\in L^1(X,\omega^n)$ is defined everywhere on 
$X\setminus\bigcup_{j=1}^\ell\Sigma_j$. Since $T'\geq0$ it follows that 
$\psi'=u$ a.{\ke}e.\ on $X$, for some function $u\in\PSH(X,\theta)$, 
hence everywhere on $X\setminus\bigcup_{j=1}^\ell\Sigma_j$ 
since both $\psi',u$ are qpsh there. Thus $\psi'$ extends to a $\theta$-psh function on $X$. 

$(ii)$ There exist points $x_k\in X$, coordinate neighborhoods $U_k$ 
centered at $x_k$, and numbers $r_k>0$, $1\leq k\leq N$, 
such that the balls $\overline\B(x_k,2r_k)\subset U_k$ and 
$X=\bigcup_{k=1}^N\B(x_k,r_k)$. Set $r=\min_{1\leq k\leq N}r_k$. 
Let $\rho_k$ be a smooth function defined in a neighborhood of 
$\overline\B(x_k,2r_k)$  such that $dd^c\rho_k=\theta$. 
If $\psi'\in \PSH(X,\theta)$ and $x\in\B(x_k,r_k)$ we have by 
the subaverage inequality for psh functions that 
\[\rho_k(x)+\psi'(x)\leq\frac{n!}{\pi^nr^{2n}}
\int_{\B(x,r)}(\rho_k+\psi')\,d\lambda\,,\]
where $\lambda$ is the Lebesgue measure in coordinates. 
Hence there exists a constant $C'>0$ such that for every function 
$\psi'\in\PSH(X,\theta)$ one has 
\[\psi'(x)\leq\frac{n!}{\pi^nr^{2n}}\int_{\B(x,r)}\psi'\,d\lambda+
C'\,,\,\;\forall\,x\in\B(x_k,r)\,,\,\;k=1,\ldots,N.\]
If $\psi'\in\mathcal{A}'(X,\alpha,\Sigma,\tau,\varphi)$ and $x\in\B(x_k,r)$, we have 
\begin{align*}
\psi'(x)&\leq\frac{n!}{\pi^nr^{2n}}\int_{\B(x,r)}
\Big(\varphi-\sum_{j=1}^\ell\tau_j\log\sigma_j\Big)\,d\lambda+C'\\
&\leq\sup_{\B(x,r)}\varphi+\frac{n!}{\pi^nr^{2n}}
\int_{\B(x,r)}\Big|\sum_{j=1}^\ell\tau_j\log\sigma_j\Big|\,d\lambda+
C' \leq\sup_{\B(x,r)}\varphi+C\,,
\end{align*}
for some constant $C>0$ depending only on $X,\Sigma,\tau,\alpha,\theta$. 
Hence $\sup_X\psi'\leq\sup_X\varphi+C$.

$(iii)$ It follows immediately from $(i)$ that the mapping 
\[\mathcal{A}'(X,\alpha,\Sigma,\tau,\varphi)\ni\psi'\longmapsto\psi:=
\psi'+\sum_{j=1}^\ell\tau_j\log\sigma_j\in\mathcal{A}(X,\alpha,\Sigma,\tau,\varphi),\]
is well defined and bijective. By $(ii)$, the family 
$\mathcal{A}'(X,\alpha,\Sigma,\tau,\varphi)$ of $\theta$-psh functions 
is uniformly upper bounded, hence the upper semicontinuous regularization 
$\varphi_\req^\star$ of $\varphi_\req$ is $\theta$-psh. 
Since $\varphi_\req\leq\varphi-\sum_{j=1}^\ell\tau_j\log\sigma_j$ 
on $X\setminus\bigcup_{j=1}^\ell\Sigma_j$ and the latter 
is upper semicontinuous there, we see that 
$\varphi_\req^\star\in\mathcal{A}'(X,\alpha,\Sigma,\tau,\varphi)$, 
so $\varphi_\req=\varphi_\req^\star$. Moreover, if 
$\psi\in\mathcal{A}(X,\alpha,\Sigma,\tau,\varphi)$ then 
$\psi\leq\varphi_\req+\sum_{j=1}^\ell\tau_j\log\sigma_j$. 
It follows that the family $\mathcal{A}(X,\alpha,\Sigma,\tau,\varphi)$ 
is uniformly upper bounded, the upper semicontinuous regularization 
$\varphi_\eq^\star$ of $\varphi_\eq$ is $\alpha$-psh and it verifies 
$\varphi_\eq^\star\leq\varphi_\req+\sum_{j=1}^\ell\tau_j\log\sigma_j$ 
and $\varphi_\eq^\star\leq\varphi$ on $X$, since the functions 
on the right hand side are upper semicontinuous. 
Hence $\varphi_\eq^\star\in\mathcal{A}(X,\alpha,\Sigma,\tau,\varphi)$, 
so $\varphi_\eq=\varphi_\eq^\star$ and \eqref{e:eqreq} is clearly satisfied.

$(iv)$ Since $m:=\inf_X\big(\varphi-\sum_{j=1}^\ell\tau_j\log\sigma_j\big)>-\infty$, 
there exists a bounded $\theta$-psh function $\psi'$ such that $\psi'\leq m$ on $X$. 
Thus $\psi'\leq\varphi_\req\leq\sup_X\varphi+C$ on $X$. 

$(v)$ Since $\PSH(X,\theta)\neq\emptyset$ and $\varphi_j$ 
is bounded, it follows that 
$\mathcal{A}'(X,\alpha,\Sigma,\tau,\varphi_j)\neq\emptyset$, $j=1,2$. 
Then $(v)$ follows easily from the definition \eqref{e:envreq} of $\varphi_\req$.
\end{proof}

The following notion is needed for studying certain regularity 
properties of the equilibrium envelopes.

\begin{Definition}\label{D:Hosing}
A function $\phi:\ X\to[-\infty,\infty)$ is H\"older with singularities along 
a proper analytic subset $A\subset X$ if there exist constants 
$c,\varrho>0$ and $0<\nu\leq1$ such that  
\begin{equation}\label{e:Hdef}
|\phi(z)-\phi(w)|\leq\frac{c\,\dist(z,w)^\nu}{\min \{\dist(z,A),\dist(w,A)\} ^\varrho}\;,\,\;
\text{for all $z,w\in X\setminus A$}\,.
\end{equation}
\end{Definition} 

We assume now that the class  $\{\theta\}_\ddbar$ is big and we let 
$Z_0:=\NAmp\big(\{\theta\}_\ddbar\big)$. 
By \cite[Theorem\ 3.17]{Bo04} there exists a K\"ahler current 
$T_0\in\{\theta\}_\ddbar$ with analytic singularities such that $E_+(T_0)=Z_0$. We let
\begin{equation}\label{e:Kc}
T_0=\theta+dd^c\psi_0\geq\varepsilon_0\omega\,,
\end{equation}
where $\varepsilon_0>0$ and $\psi_0$ is a qpsh function 
with analytic singularities. So $Z_0=\{\psi_0=-\infty\}$. 
By subtracting a constant we may assume that $\psi_0\leq-1$ on $X$.
 
Using the methods developed in \cite{D94} and \cite{BD12} 
(see also \cite{DMN17}), we prove next the following regularity 
result for the functions $\varphi_\req=\varphi_{\req,\Sigma,\tau}$ 
defined in \eqref{e:envreq}, and $\varphi_\eq=\varphi_{\eq,\Sigma,\tau}$ 
defined in \eqref{e:enveq}.
 
 \begin{Theorem}\label{T:reg}
 Let $(X,\omega)$ be a compact Hermitian manifold of dimension $n$, 
 $\Sigma_j\subset X$ be irreducible complex hypersurfaces, 
 and let $\tau_j>0$, where $1\leq j\leq\ell$.  Let $\alpha$ 
 be a smooth closed real $(1,1)$-form on $X$ and $\theta$ 
 be as in \eqref{e:bts}. Assume that the class $\{\theta\}_\ddbar$ 
 is big and let $Z_0:=\NAmp\big(\{\theta\}_\ddbar\big)$. 
 Then the following hold:
 
 (i) If $\varphi:X\to\R$ is H\"older continuous then $\varphi_\req$ 
 is H\"older with singularities along $Z_0$, and $\varphi_\eq$ is 
 H\"older with singularities along $\Sigma_1\cup\ldots\cup \Sigma_\ell\cup Z_0$.

(ii) If $\varphi:X\to\R$ is continuous then $\varphi_\req$ 
is continuous on $X\setminus Z_0$, and $\varphi_\eq$ is 
continuous on $X\setminus(\Sigma_1\cup\ldots\cup \Sigma_\ell\cup Z_0)$.
\end{Theorem}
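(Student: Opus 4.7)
The plan is to reduce both parts to a regularity statement for $\varphi_\req$ and then prove that regularity by a Demailly--Berman-Demailly type sup-convolution, with the K\"ahler current $T_0$ from \eqref{e:Kc} absorbing the unavoidable curvature losses. By Proposition~\ref{P:envelopes}(iii) we have $\varphi_\eq = \varphi_\req + \sum_{j=1}^\ell \tau_j \log \sigma_j$ on $X$. Each $\log \sigma_j$ is smooth on $X \setminus \Sigma_j$ with a logarithmic pole along $\Sigma_j$, hence $\sum_j \tau_j \log \sigma_j$ is H\"older with singularities along $\cup_j \Sigma_j$ in the sense of Definition~\ref{D:Hosing}, so the assertions on $\varphi_\eq$ reduce to those on $\varphi_\req$. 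After normalizing each $g_j$ so that $\sigma_j \leq 1$ on $X$, the functions $\varphi$ and $-\sum_j \tau_j \log \sigma_j$ are bounded below, while $\psi_0 \leq -1$ lies in $\PSH(X,\theta)$; for $C$ large enough $\psi_0 - C \in \mathcal{A}'(X,\alpha,\Sigma,\tau,\varphi)$, and combining with Proposition~\ref{P:envelopes}(ii) gives
\begin{equation*}
\psi_0(x) - C \leq \varphi_\req(x) \leq \sup_X \varphi + C' \quad \text{for all } x \in X,
\end{equation*}
so $\varphi_\req$ is locally bounded on $X \setminus Z_0$.

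For the H\"older bound in (i) with $\varphi \in C^{0,\nu_0}(X)$, $\nu_0 \in (0,1]$, I would follow the scheme of \cite{D94,BD12,DMN17}: in a fixed finite coordinate atlas on $X$, for $t>0$ small set
\begin{equation*}
\rho_t \varphi_\req(x) := \sup_{\dist(x,y) \leq t} \varphi_\req(y),
\end{equation*}
made global via a partition of unity. The chart-gluing and the sup produce a curvature loss of order $O(t)\,\omega$, which the K\"ahler current $T_0 = \theta + dd^c \psi_0 \geq \varepsilon_0 \omega$ absorbs: for constants $A,\lambda>0$ the function
\begin{equation*}
u_t := (1 - A t^\lambda)\, \rho_t \varphi_\req + A t^\lambda \psi_0
\end{equation*}
lies in $\PSH(X,\theta)$. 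The H\"older continuity of $\varphi$ and the boundedness of $-\log \sigma_j$ from below yield $u_t \leq \varphi - \sum_j \tau_j \log \sigma_j + C t^{\nu_0}$ on $X \setminus \cup_j \Sigma_j$, so $u_t - C t^{\nu_0} \in \mathcal{A}'(X,\alpha,\Sigma,\tau,\varphi)$, whence $u_t - C t^{\nu_0} \leq \varphi_\req$ pointwise. Rearranging gives
\begin{equation*}
\rho_t \varphi_\req(x) - \varphi_\req(x) \leq C t^{\nu_0} - A t^\lambda \psi_0(x) + O(t^{2\lambda}),\quad x\in X\setminus Z_0.
\end{equation*}
Since $\psi_0$ has analytic singularities, $|\psi_0(x)| = O\bigl(\log(1/\dist(x,Z_0))\bigr)$ near $Z_0$, and optimizing $t$ in terms of $\dist(x,y)$ then converts this into the inequality \eqref{e:Hdef} with some $\nu = \nu(\nu_0,\lambda) \in (0,1]$ and any $\varrho > 0$.

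The continuity case (ii) follows by density: any $\varphi \in C(X)$ is a uniform limit of H\"older continuous functions $\varphi_k$ (obtained by convolving in charts and patching), and Proposition~\ref{P:envelopes}(v) gives $\|\varphi_{k,\req} - \varphi_\req\|_{L^\infty(X)} \leq \|\varphi_k - \varphi\|_{L^\infty(X)} \to 0$; by part (i) each $\varphi_{k,\req}$ is continuous on $X \setminus Z_0$, and hence so is the uniform limit $\varphi_\req$. The main technical obstacle is the construction in Step~2: defining the sup-convolution globally in the presence of non-trivial curvature, verifying rigorously that the $\psi_0$-corrected function $u_t$ lies in $\PSH(X,\theta)$ by offsetting the chart-gluing error against the strict positivity $\varepsilon_0 \omega$ furnished by $T_0$, and quantitatively converting the logarithmic blow-up of $-\psi_0$ near $Z_0$ into the H\"older-with-singularities exponents in \eqref{e:Hdef}.
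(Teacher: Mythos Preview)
Your overall strategy matches the paper's: reduce to $\varphi_\req$ via \eqref{e:eqreq}, handle (ii) by uniform approximation and Proposition~\ref{P:envelopes}(v), and prove (i) by regularizing $\varphi_\req$, absorbing the curvature loss with $\psi_0$, and comparing against the envelope. The difference lies in the regularization. The paper uses Demailly's integral averaging $\Psi(x,t)$ via the map $\exph$ together with the Kiselman--Legendre transform $\psi_{c,\delta}$ (following \cite{D94,BD12}), which comes with the precise curvature bound $\theta+dd^c\psi_{c,\delta}\ge -(ac+b\delta^2)\omega$. After the envelope comparison this yields $\Psi(x,t)-\varphi_\req(x)\le O(t^\nu)/\dist(x,Z_0)^N$, and the paper then converts this into the H\"older estimate via a Lelong--Jensen inequality bounding the Laplacian mass plus the potential-theoretic Lemma~\ref{L:Hoelder}. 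Your sup-convolution $\rho_t\varphi_\req$ would shortcut this last conversion, since $\rho_t\varphi_\req(x)-\varphi_\req(x)$ already controls the one-sided oscillation; so if your regularization step went through, your route would be more elementary at the end.

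The gap is precisely the step you flag as the ``main technical obstacle'': producing a \emph{global} sup-convolution on $(X,\omega)$ with curvature loss $O(t)\omega$. A chart-wise sup ``made global via a partition of unity'', as you write, does not remain qpsh (convex combinations destroy plurisubharmonicity), and a Richberg-type max-gluing with the right curvature control requires real work. This is exactly what Demailly's $\exph$ construction accomplishes; the paper invokes that established machinery rather than reinventing it. So your sketch is correct in spirit but defers the substantive analytic input to an unproved claim. A secondary point: the upper bound $u_t\le\varphi-\sum_j\tau_j\log\sigma_j+Ct^{\nu_0}$ does not follow merely from ``$-\log\sigma_j$ bounded below'', because $-\sum_j\tau_j\log\sigma_j$ blows up along $\Sigma_j$ and is not Lipschitz there; one needs the global bound $\varphi_\req\le\sup_X\varphi+C$ from Proposition~\ref{P:envelopes}(ii) to replace $-\sum_j\tau_j\log\sigma_j$ by the Lipschitz truncation $f=\min\{-\sum_j\tau_j\log\sigma_j,\,M\}$, exactly as the paper does.
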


We will need the following lemma which follows from \cite[Lemma 2.8]{DMN17}:

\begin{Lemma}\label{L:Hoelder}
Let $r,\nu\in(0,1)$ and $u$ be a subharmonic function in a neighborhood 
of the ball $\overline\B(0,3r)\subset \R^m$. 
Suppose that there exists a constant  $A>0$ such that 
$|u(x)|\leq A$ for all $x\in\B(0,3r)$, and 
\[\Delta u(\B(x,s))\leq A s^{m-2+\nu}\,,\]
for all $x\in \B(0,2r)$ and $0<s<r$. Then there exists a constant $C(m,\nu)>0$ such that
\[|u(x)-u(y)|\leq\frac{C(m,\nu)A}{r^\nu}\,|x-y|^\nu\,,\,\;\forall\,x,y\in \B(0,r)\,.\]
\end{Lemma}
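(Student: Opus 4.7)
The plan is to obtain the estimate from \cite[Lemma 2.8]{DMN17} by a rescaling argument, and to indicate the structure of the underlying proof for completeness. First I would reduce to a universal estimate by setting $\tilde u(x):=u(rx)/A$ on $\overline\B(0,3)$, which is subharmonic with $\|\tilde u\|_\infty\leq 1$, and whose Riesz measure $\tilde\mu:=\Delta\tilde u$ satisfies $\tilde\mu(\B(x,s))\leq s^{m-2+\nu}$ for all $x\in\B(0,2)$ and $0<s<1$. It suffices then to prove that $\tilde u$ is H\"older of exponent $\nu$ on $\B(0,1)$ with a universal constant $C(m,\nu)$; unscaling produces the required factor $A/r^\nu$.

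The heart of the argument is the Riesz decomposition $\tilde u=h+P$ on $\B(0,2)$, where $h$ is harmonic and $P(x):=\int N(x-y)\,d\tilde\mu(y)$ is the Newtonian potential of $\tilde\mu\big|_{\B(0,2)}$ against the fundamental solution $N$ of the Laplacian in $\R^m$. For the potential part, a standard dyadic estimate splits the difference $P(x)-P(y)$ into a contribution from $\B(x,2\delta)$, with $\delta:=|x-y|$, and a complementary region. The near-field piece is bounded by integrating $|N|$ against $\tilde\mu$ via the layer-cake representation together with the growth assumption $\tilde\mu(\B(x,s))\leq s^{m-2+\nu}$; the far-field piece is controlled by the mean value bound $|N(x-z)-N(y-z)|\leq C\delta\,|x-z|^{1-m}$ and the same growth condition integrated up to unit scale. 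The two contributions combine to give $|P(x)-P(y)|\leq C(m,\nu)\delta^\nu$ on $\B(0,1)$.

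For the harmonic part, the same layer-cake estimate with the growth of $\tilde\mu$ yields a uniform bound $|P|\leq C(m,\nu)$ on $\B(0,2)$, so $|h|\leq|\tilde u|+|P|$ is bounded by a universal constant on $\B(0,2)$. Classical interior gradient estimates for harmonic functions then give $|\nabla h|\leq C$ on $\B(0,1)$, whence $|h(x)-h(y)|\leq C\delta\leq C\delta^\nu$ (since $\delta\leq 1$). Adding the estimates for $P$ and $h$ produces the H\"older bound for $\tilde u$, and unscaling finishes the proof.

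The main technical step is the dyadic estimate for the Newtonian potential, where the exponent $m-2+\nu$ is chosen precisely so that the singularity of $N$ at the origin is absorbed into the growth of $\tilde\mu$ to yield the H\"older exponent $\nu$. The cases $m=1,2$ require only cosmetic changes to accommodate the linear and logarithmic forms of the Newtonian kernel, and are absorbed into the final constant $C(m,\nu)$.
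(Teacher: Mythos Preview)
Your proposal is correct and essentially coincides with what the paper does: the paper simply records that the lemma ``follows from \cite[Lemma 2.8]{DMN17}'' and gives no further argument, while you supply the standard proof that underlies that citation (rescaling to normalize, Riesz decomposition, dyadic estimate on the Newtonian potential using the growth bound on $\Delta u$, and interior gradient estimates for the harmonic remainder). One small point worth making explicit in your rescaling step: after setting $\tilde u(x)=u(rx)/A$, the Riesz measure actually satisfies $\tilde\mu(\B(x,s))=A^{-1}r^{2-m}(\Delta u)(\B(rx,rs))\leq r^\nu s^{m-2+\nu}\leq s^{m-2+\nu}$, where the last inequality uses the hypothesis $r<1$; this is where that hypothesis enters.
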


\begin{proof}[Proof of Theorem \ref{T:reg}]
 
$(i)$ Assume $\varphi$ is a $\cC^\nu$ function on $X$ and let 
$\psi:=\phi_\req$. We show that $\psi$ is H\"older with singularities along 
$Z_0$ and with H\"older exponent $\nu$. Following \cite{BD12}, 
we will regularize $\psi$ using the method introduced by Demailly in \cite{D94}. 

Consider the exponential map associated with the Chern connection 
on the tangent bundle $TX$ of $X$. The {\it formal holomorphic part} 
of its Taylor expansion is denoted by
\[\exph: TX\to X \quad \text{with} \quad T_xX\ni\zeta\mapsto\exph_x(\zeta)\,.\]
Let $\chi:\ \R\to [0,\infty)$ be the smooth function with support in $(-\infty,1]$ defined by
\[\chi(t)=\frac{\const}{(1-t)^2}\,\exp\frac{1}{t-1}\,
\text{ for $t<1$}\,,\,\;\chi(t)=0\,\text{ for $t\geq1$}\,,\]
where the constant $\const$  is adjusted so that 
$\int_{|\zeta|\leq 1}\chi(|\zeta|^2)\,d\Leb(\zeta)=1$ with respect to the 
Lebesgue measure $\Leb(\zeta)$ on $\C^n\simeq T_xX$. 
We fix a constant $\delta_0>0$ small enough and define 
\begin{align}\label{eq:n3.38}
\Psi(x,t):=\int_{\zeta\in T_xX}  \psi(\exph_x(t\zeta))\chi(|\zeta|^2)\,d\Leb(\zeta)\,,\,\;\text{ for $(x,t)
\in X\times [0,\delta_0]$}\,.
\end{align}
By \cite{D94}, there exists a constant $b$ such that the function 
$t\mapsto \Psi(x,t)+bt^2$ is increasing for $t$ in $[0,\delta_0]$. 
Observe also that $\Psi(x,0)=\psi(x)$. 

Consider  for $c>0$ and $\delta\in (0,\delta_0]$ the {\it Kiselman-Legendre transform} 
\begin{equation}\label{eq_Kiselman-Legendre_transform}
\psi_{c,\delta}(x):=\inf_{t\in (0,\delta]}\Big( \Psi(x,t)+bt^2-b\delta^2 - c\log\frac{t}{\delta}\Big).
\end{equation}
It was shown in  \cite[Lemma 1.12]{BD12} that $\psi_{c,\delta}$ is qpsh and 
\begin{equation}\label{e:dd^c_psi_c,delta}
\theta+dd^c\psi_{c,\delta}\geq-(ac+b\delta^2)\omega\,, 
\end{equation}
where $a>0$ is  a constant (see also \cite{Kiselman78, Kiselman94}).

For $t:=\delta$ we obtain from \eqref{eq_Kiselman-Legendre_transform} 
that $\psi_{c,\delta}(x)\leq \Psi(x,\delta)$. From \eqref{eq:n3.38} we deduce that $\Psi(x,\delta)$ 
is an average of values of $\psi$ in a ball $\B(x,A\delta)$ in $X$ for some 
constant $A$ depending only on $X$ and $\omega$. 
By Proposition \ref{P:envelopes} we have that 
\[\psi\leq\min\Big\{\varphi-\sum_{j=1}^\ell\tau_j\log\sigma_j\,,\,\sup_X\varphi+
C\Big\}\leq\varphi+f\] 
on $X$, where 
$f:=\min\big\{-\sum_{j=1}^\ell\tau_j\log\sigma_j\,,\,
\sup_X\varphi-\inf_X\varphi+C\big\}$. 
In the sequel we denote $O(\delta^\nu):=C'\delta^\nu$ 
with constants $C'>0$ independent of $x$ and $\delta$. 
Since $\varphi\in\cC^\nu$ and $f$ is a Lipschitz function on $X$ it follows that  
\[\Psi(x,\delta)\leq\sup_{\B(x,A\delta)}(\varphi+f)\leq\varphi(x)+
f(x)+O(\delta^\nu)\leq\varphi(x)-\sum_{j=1}^\ell\tau_j\log\sigma_j(x)+O(\delta^\nu)\,,\]
for all $x\in X$ and $0<\delta\leq\delta_0$. 
Hence 
$\psi_{c,\delta}\leq\varphi-\sum_{j=1}^\ell\tau_j\log\sigma_j+O(\delta^\nu)$. Let $\psi_0$ 
be the $\theta$-psh function with analytic singularities in $Z_0$ defined in \eqref{e:Kc}. 
Since $\varphi$ is bounded there exists a constant $C_1>0$ such that $\psi_0\leq C_1+
\varphi-\sum_{j=1}^\ell\tau_j\log\sigma_j$. So $\psi_0-C_1\leq\psi$, 
and $\psi$ is locally bounded on $X\setminus Z_0$. 
Consider the convex combination
\[\xi:=\frac{ac+b\delta^2}{\varepsilon_0 }\,\psi_0+
\Big( 1- \frac{ac+b\delta^2}{\varepsilon_0 }\Big)\psi_{c,\delta}\,,\]
where we take $c=\delta^\nu$. We deduce from the above upper 
bounds for $\psi_{c,\delta}$ and $\psi_0$ that
\[\xi\leq\varphi-\sum_{j=1}^\ell\tau_j\log\sigma_j+O(\delta^\nu)\,.\]
By \eqref{e:Kc} and \eqref{e:dd^c_psi_c,delta} we have that 
\[\theta+dd^c\xi\geq  (ac+b\delta^2)\omega-
\Big( 1- \frac{ac+b\delta^2}{\varepsilon_0}\Big)(ac+b\delta^2)\omega\geq 0\,,\]
hence 
\[\psi(x)+O(\delta^\nu)\geq\xi(x)\geq
\frac{ac+b\delta^2}{\varepsilon_0 }\,\psi_0(x)+
\psi_{c,\delta}(x)-\frac{ac+b\delta^2}{\varepsilon_0}\,\Psi(x,\delta)\,,\,\;
\forall\,x\in X\,.\]
Since $c=\delta^\nu$ and by Proposition \ref{P:envelopes} 
$(ii)$, $\Psi(x,\delta)\leq\sup_X\psi\leq\sup_X\varphi+C$, it follows that
\[\frac{ac+b\delta^2}{\varepsilon_0 }\,\psi_0+
\psi_{c,\delta}\leq\psi+O(\delta^\nu)\,.\]

If $x\in X\setminus Z_0$ then $\Psi(x,0)=\psi(x)>-\infty$, 
so the increasing function $t\mapsto\Psi(x,t)+bt^2$ is bounded 
and the infimum in the definition of $\psi_{c,\delta}(x)$ is reached for 
some $t=t_{x,\delta}\in(0,\delta]$. Hence
 \begin{equation}\label{eq_rho_psi}
 \Psi(x,t_{x,\delta})+bt_{x,\delta}^2\leq \psi(x) +
 c\log\frac{t_{x,\delta}}{\delta}-\frac{ac+b\delta^2}{\varepsilon_0 }\,\psi_0(x)+O(\delta^\nu)\,.
 \end{equation}
Using the fact that $t\mapsto \Psi(x,t)+bt^2$ is increasing, this implies that 
 \[c\log\frac{t_{x,\delta}}{\delta}-\frac{ac+b\delta^2}{\varepsilon_0 }\,\psi_0(x)
 +O(\delta^\nu)\geq 0\,.\]
Since $c=\delta^\nu$ and $\psi_0\leq-1$, we infer from the above inequality 
that there exists a constant $C_2>0$ such that 
$e^{C_2\psi_0(x)}\delta\leq t_{x,\delta}\leq\delta$, 
for all $x\in X\setminus Z_0$. By \eqref{eq_rho_psi} and using again that 
$t\mapsto \Psi(x,t)+bt^2$ is increasing, we see that 
\[\Psi\big(x,e^{C_2\psi_0(x)}\delta\big)-\psi(x)\leq
|\psi_0(x)|O(\delta^\nu)\,,\,\;\forall x\in X\setminus Z_0\,.\]
Since $\psi_0$ has analytic singularities, it follows by the 
Lojasiewicz inequality \cite[Theorem 5.2.4]{KP02} that there exists a constant $M>0$ such that
$e^{\psi_0(x)}\gtrsim\dist(x,Z_0)^M$. 
Letting $t=e^{C_2\psi_0(x)}\delta$ 
we conclude by the above estimates that there exist constants $\varepsilon_1,N>0$ such that 
\begin{equation}\label{e:Psi_psi}
\Psi(x,t)-\psi(x)\leq\frac{O(t^\nu)}{\dist(x,Z_0)^N}\;,\,\;
\forall\,x\in X\setminus Z_0,\;0<t<\varepsilon_1\dist(x,Z_0)^N\,.
\end{equation}

There exists $r_0>0$ such that every $x\in X$ has a coordinate neighborhood $U_x$ 
centered at $x$ with $\overline\B(x,3r_0)\subset U_x$ and so that the metric on $X$ 
coincides at $x$ with the standard metric given by the coordinates. 
According to \cite[(4.5)]{D94} (see also \cite[(1.16)]{BD12}, \cite[Theorem 2.7]{DMN17}) 
it follows from a Lelong-Jensen type inequality that 
\[\Psi(x,t)-\psi(x)\gtrsim\frac{1}{t^{2n-2}}\,\int_{\B(x,t/4)}\Delta\psi - O(t^2)\,,\] 
where the constants involved are independent of $x\in X$ and $t\in(0,\delta_0)$.
 Combining this and \eqref{e:Psi_psi}, we infer that  
 \[\int_{\B(x,t)}\Delta\psi\leq\frac{O(t^{2n-2+\nu})}{\dist(x,Z_0)^N}\;,\,\;
 \forall\,x\in X\setminus Z_0,\;0<t<\min\big\{r_0,\varepsilon_1\dist(x,Z_0)^N\big\}\,.\]
 
 Lemma \ref{L:Hoelder} implies that there exist constants $C_3,\varepsilon_2>0$ 
 such that if $x\in X\setminus Z_0$ and $\dist(y,x)\leq\varepsilon_2\dist(x,Z_0)^N$ then 
 \[|\psi(y)-\psi(x)|\leq\frac{C_3}{\dist(x,Z_0)^{2N}}\,\dist(y,x)^\nu\,.\]

Since $\psi_0-C_1\leq\psi$, it follows that 
$|\psi|\leq C_4|\log\dist(\LargerCdot,Z_0)|+C_1$ holds on $X$, 
for some constant $C_4>0$. Assume now that 
$x,y\in X\setminus Z_0$ and $\dist(y,x)\geq\varepsilon_2\dist(x,Z_0)^N$. Then 
\[|\psi(y)-\psi(x)|\leq\big(C_4|\log\dist(x,Z_0)|+
C_4|\log\dist(y,Z_0)|+2C_1\big)\frac{\dist(y,x)}{\varepsilon_2\dist(x,Z_0)^N}\;.\]
The previous two estimates combined show that $\psi=\varphi_\req$ 
is H\"older with singularities along $Z_0$ and with H\"older exponent $\nu$. 
Hence by \eqref{e:eqreq}, $\varphi_\eq$ is H\"older with singularities along 
$\Sigma_1\cup\ldots\cup \Sigma_\ell\cup Z_0$ and with H\"older exponent 
$\nu$, since $\log\sigma_j$ is Lipschitz with singularities along $\Sigma_j$. 

\medskip

$(ii)$ Let $\{\varphi_k\}$ be a sequence of real-valued smooth functions 
converging uniformly to $\varphi$ on $X$. By $(i)$ and Proposition 
\ref{P:envelopes} $(v)$, $\varphi_{k,\req}$ are continuous and converge 
uniformly to $\varphi_\req$ on $X\setminus Z_0$, hence $\varphi_\req$ 
is continuous on $X\setminus Z_0$. 
 \end{proof}

\section{Convergence of the global Fubini-Study potentials}\label{S:FSpotentials}
 
Let $X,L,\Sigma,\tau$ verify assumptions (A)-(D), 
and assume in addition that there exists a K\"ahler form $\omega$ 
on $X$ and that $h$ is a continuous Hermitian metric on $L$. 
In this section we prove the convergence of the Fubini-Study potentials and currents 
of the space $H^0_0 (X, L^p)$ defined in \eqref{e:H00}, 
towards $\varphi_\eq$ and the equilibrium current $T_\eq$ of $(L,h,\Sigma,\tau)$,
respectively (Theorem \ref{T:FSpot}).

Let $h_0,\varphi$ be as in \eqref{e:varphi}. Let $P_p,\gamma_p$ 
be the Bergman kernel function and Fubini-Study current of the space 
$H^0_{0,(2)}(X,L^p)$, and let $\varphi_p$ be the global Fubini-Study potential 
of $\gamma_p$ (see \eqref{e:FSpot}). We fix a divisorization $(\wi X,\pi,\wi\Sigma)$ 
of $(X,\Sigma)$ as in Definition \ref{D:divisorization}. 
Thus there exists an analytic subset $Y$ of $X$ such that $\dim Y\leq n-2$, 
$X_\sing\subset Y$, $E=\pi^{-1}(Y)$ is the final exceptional divisor, 
and $\pi:\wi X\setminus E\to X\setminus Y$ is a biholomorphism. 
We let $\wi\omega$ be a K\"ahler form on $\wi X$ such that 
$\wi\omega\geq\pi^\star\omega$ (see e.{\ke}g.\ \cite[Lemma 2.2]{CMM17}) 
and denote by $\dist$ the distance on $\wi X$ induced by $\wi\omega$. Set 
\begin{equation}\label{e:wivarphi}
\wi L:=\pi^\star L\,,\,\;\wi h_0:=\pi^\star h_0\,,\,\;\wi\alpha:=
\pi^\star\alpha=c_1(\wi L,\wi h_0)\,,\,\;\wi\varphi:=
\varphi\circ\pi\,,\,\;\wi h:=\pi^\star h=\wi h_0e^{-2\wi\varphi}\,.
\end{equation}
We write $\wi h^p=\wi h^{\otimes p}$ and $\wi h_0^p=
\wi h_0^{\otimes p}$. Corollary \ref{C:iso} implies that the map 
\begin{equation}\label{e:iso}
S\in H^0_{0,(2)}(X,L^p)\to\pi^\star S\in H^0_{0,(2)}(\wi X,\wi L^p):=
H^0_{0,(2)}(\wi X,\wi L^p,\wi\Sigma,\tau,\wi h^p,\pi^\star\omega^n)
\end{equation}
is an isometry. It follows that 
\begin{equation}\label{e:wiFS}
\wi P_p=P_p\circ\pi\,,\,\;\wi\gamma_p=\pi^\star\gamma_p
\end{equation}
are the Bergman kernel function, resp.\ Fubini-Study current, 
of the space $H^0_{0,(2)}(\wi X,\wi L^p)$. Note that  
\begin{equation}\label{e:wiFSpot}
\frac{1}{p}\,\wi\gamma_p=\wi\alpha+dd^c\wi\varphi_p\,,\,
\text{ where }\,\wi\varphi_p=\wi\varphi+\frac{1}{2p}\,\log\wi P_p\,.
\end{equation}
We call the function $\wi\varphi_p$ the {\em global Fubini-Study potential} of $\wi\gamma_p$. 

Let $\wi\varphi_\eq$ be the equilibrium envelope of 
$(\wi\alpha,\wi\Sigma,\tau,\wi\varphi)$ defined in \eqref{e:enveq},
\begin{equation}\label{e:wienveq}
\wi\varphi_\eq=\wi\varphi_{\eq,\wi\Sigma,\tau}=
\sup\{\psi:\,\psi\in\mathcal{L}(\wi X,\wi\alpha,\wi\Sigma,\tau),\;
\psi\leq\wi\varphi \,\text{ on } \wi X\}\,,
\end{equation}
where $\mathcal{L}(\wi X,\wi\alpha,\wi\Sigma,\tau)$ is defined 
in \eqref{e:calL}. Let $s_{\wi\Sigma_j}$ be the canonical section of 
$\cO_{\wi X}(\wi\Sigma_j)$ and fix a smooth Hermitian metric $g_j$ on 
$\cO_{\wi X}(\wi\Sigma_j)$ such that 
\begin{equation}\label{e:wisig}
\sigma_j:=|s_{\wi\Sigma_j}|_{g_j}<1 \text{ on } \wi X,\;1\leq j\leq\ell\,.
\end{equation}
Set 
\begin{equation}\label{e:wibts}
\beta_j=c_1(\cO_{\wi X}(\wi\Sigma_j),g_j)\,,\,\;\wi\theta=
\wi\alpha-\sum_{j=1}^\ell\tau_j\beta_j\,.
\end{equation}
Note that $[\wi\Sigma_j]=\beta_j+dd^c\log\sigma_j$, by the Lelong-Poincar\'e formula. 

In the above setting, we have the following theorem which shows 
that on $\wi X$, the global Fubini-Study potentials $\wi\varphi_p$ 
converge to the equilibrium envelope $\wi\varphi_\eq$ of $\wi\varphi$.
\begin{Theorem}\label{T:wiFSpot}
Let $X,L,\Sigma,\tau$ verify assumptions (A)-(D), 
and assume that $(L,\Sigma,\tau)$ is big and there exists a K\"ahler form 
$\omega$ on $X$. Let $h$ be a continuous Hermitian metric on $L$, 
let $\varphi$, $\wi\varphi$, $\wi\varphi_p,\wi\varphi_\eq,\wi\theta$ 
be defined in \eqref{e:varphi}, \eqref{e:wivarphi}, \eqref{e:wiFSpot}, 
\eqref{e:wienveq}, resp.\ \eqref{e:wibts}, and set 
$Z:=\wi\Sigma_1\cup\ldots\cup\wi\Sigma_\ell\cup \NAmp\big(\{\wi\theta\}\big)$. 
Then the following hold:
\\[2pt]
(i) $\wi\varphi_p\to\wi\varphi_\eq$ in $L^1(\wi X,\wi\omega^n)$ 
and locally uniformly on $\wi X\setminus Z$ as $p\to\infty$.
\\[2pt]
(ii) If $\varphi$ is H\"older continuous on $X$ 
then there exist a constant $C>0$ and $p_0\in\N$ such that 
for all $x\in\wi X\setminus Z$ and $p\geq p_0$ we have 
\[|\wi\varphi_p(x)-\wi\varphi_\eq(x)|\leq
\frac{C}{p}\,\big(\log p+\big|\log\dist(x,Z)\big|\big).\]
In particular, we have the convergence of the Fubini-Study currents 
defined in \eqref{e:wiFS},
\[\frac{1}{p}\,\wi\gamma_p=
\wi\alpha+dd^c\wi\varphi_p\to\wi T_\eq:=
\wi\alpha+dd^c\wi\varphi_\eq\,,\,\text{ as $p\to\infty$, 
weakly on $\wi X$}\,.\]
\end{Theorem}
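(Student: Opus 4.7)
The plan is to sandwich $\wi\varphi_p$ between $\wi\varphi_\eq-o(1)$ and $\wi\varphi_\eq+o(1)$ off the set $Z$, and deduce the $L^1$ and locally uniform convergence, as well as the current statement in (ii), from matching pointwise bounds. For the \emph{upper bound}, first observe that each $S\in H^0_{0,(2)}(\wi X,\wi L^p)$ vanishes to order at least $t_{j,p}\geq\tau_j p$ along $\wi\Sigma_j$, so $\wi P_p$ vanishes to order at least $2t_{j,p}$ there; combined with the continuity of $\wi\varphi$, this gives $\nu(\wi\varphi_p,x)\geq\tau_j$ for $x\in\wi\Sigma_j$, hence $\wi\varphi_p\in\mathcal{L}(\wi X,\wi\alpha,\wi\Sigma,\tau)$. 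I would then apply the submean inequality on a ball $\B(x,r)$ to $\log|S|^2_{\wi h_0^p}$ (locally $p\wi\alpha$-psh), rewrite via $|S|^2_{\wi h^p}=|S|^2_{\wi h_0^p}e^{2p\wi\varphi}$, and use $\|S\|_p=1$ together with the modulus of continuity $\mu_{\wi\varphi}$ of $\wi\varphi$ to obtain
$$\wi P_p(x)\leq C\,r^{-2n}\exp\bigl(2p\,\mu_{\wi\varphi}(r)\bigr).$$
Optimizing in $r$ yields $\wi\varphi_p\leq\wi\varphi+\delta_p$ globally, with $\delta_p\to 0$ in general and $\delta_p=O((\log p)/p)$ in the H\"older case. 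Since a constant shift preserves $\wi\alpha$-pshness and Lelong numbers, $\wi\varphi_p-\delta_p\in\mathcal{A}(\wi X,\wi\alpha,\wi\Sigma,\tau,\wi\varphi)$, and maximality of the envelope forces $\wi\varphi_p\leq\wi\varphi_\eq+\delta_p$.

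For the \emph{pointwise lower bound}, fix $x_0\in\wi X\setminus Z$. I would construct $S_p\in H^0_{0,(2)}(\wi X,\wi L^p)$ with $\|S_p\|_p\leq C$ and
$$|S_p(x_0)|^2_{\wi h^p}\geq C^{-1}\exp\bigl(2p\,\wi\varphi_\eq(x_0)-2p\,\wi\varphi(x_0)-\eta_p(x_0)\bigr),$$
where $\eta_p(x_0)=O\bigl(\log p+|\log\dist(x_0,Z)|\bigr)$; by the variational principle \eqref{e:variational-carac} this translates into the desired lower bound on $\wi\varphi_p(x_0)$. The section is built through the standard $L^2$-correction: take a local holomorphic peak at $x_0$ multiplied by a smooth cut-off, then solve a $\db$-equation via Demailly--H\"ormander $L^2$-estimates with a singular weight combining (a) a Demailly-type regularization of $p\wi\varphi_\eq$, afforded by Theorem \ref{T:reg} and dictating via the Ohsawa--Takegoshi/Skoda integrability thresholds the correct vanishing orders $t_{j,p}$ along the $\wi\Sigma_j$'s, (b) the K\"ahler-current weight $h^\star$ on $\wi L$ supplied by bigness and Theorem \ref{T:main1}, providing the strict positivity needed to apply H\"ormander, and (c) a logarithmic pole at $x_0$ of weight $n$, forcing the $L^2$-correction to vanish there. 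The principal obstacle is that this singular weight must simultaneously enforce exact vanishing to order $t_{j,p}$ (not merely $\lfloor\tau_j p\rfloor$) along each $\wi\Sigma_j$, produce the $(\log p)/p$ peak error, and blow up only like $|\log\dist(\cdot,Z)|/p$ near $Z$; balancing these ingredients and tracking universal constants is the delicate technical point, analogous to the peak-section constructions in \cite{CM15,CMM17,DMM16} but complicated by the prescribed vanishing along several subvarieties.

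Combining the two bounds gives the quantitative two-sided estimate in (ii) pointwise on $\wi X\setminus Z$, from which the weak convergence $\tfrac{1}{p}\wi\gamma_p=\wi\alpha+dd^c\wi\varphi_p\to\wi T_\eq$ follows immediately. For (i), the family $\{\wi\varphi_p\}\subset\PSH(\wi X,\wi\alpha)$ is uniformly upper-bounded by Step 1, hence relatively compact in $L^1(\wi X,\wi\omega^n)$. Any subsequential $L^1$-limit $\psi_\infty$ is $\wi\alpha$-psh, satisfies $\psi_\infty\leq\wi\varphi_\eq$ by Step 1, and equals $\wi\varphi_\eq$ a.{\ke}e.\ by Step 2 (as $Z$ has zero Lebesgue measure); since two $\wi\alpha$-psh functions that agree a.{\ke}e.\ coincide everywhere, $\psi_\infty=\wi\varphi_\eq$ and the full sequence converges in $L^1$. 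Locally uniform convergence on $\wi X\setminus Z$ then follows from the continuity of $\wi\varphi_\eq$ there (Theorem \ref{T:reg}), the pointwise two-sided control on compact subsets avoiding $Z$, and standard Hartogs-type semicontinuity for $\wi\alpha$-psh sequences.
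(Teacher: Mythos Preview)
Your overall strategy---sandwiching $\wi\varphi_p$ between $\wi\varphi_\eq\pm\delta_p$---matches the paper's, and your lower bound via a peak section solved through H\"ormander/Demailly $L^2$-estimates with a weight combining $\wi\varphi_\eq$, a K\"ahler-current potential on $\{\wi\theta\}$, and logarithmic poles along the $\wi\Sigma_j$ is essentially what the paper does in Proposition~\ref{P:lewiFS} (there the weight is written explicitly as $\psi_p=(p-p_0)\wi\varphi_\eq+p_0\eta+\sum_j(p_0\tau_j+1)\log\sigma_j$, with no regularization of $\wi\varphi_\eq$ needed).

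There is, however, a genuine gap in your upper bound. The $L^2$-norm defining $\wi P_p$ is taken with respect to $\pi^\star\omega^n$, not $\wi\omega^n$ (see \eqref{e:iso}), and $\pi^\star\omega^n$ degenerates along the exceptional divisor $E=\pi^{-1}(Y)$. When you apply the submean inequality on $\B(x,r)$ and compare the Lebesgue integral to $\|S\|_p^2$, the implicit constant blows up like $\dist(x,E)^{-K}$ for some $K>0$. Thus your displayed bound $\wi P_p(x)\leq C r^{-2n}\exp(2p\,\mu_{\wi\varphi}(r))$ is \emph{not} uniform in $x$: you only get it for $\dist(x,E)\geq\delta$, with an extra $-\frac{K}{2p}\log\delta$ term. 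In particular, your ``optimizing in $r$'' does not yield $\wi\varphi_p\leq\wi\varphi+\delta_p$ on all of $\wi X$, and without this global inequality you cannot invoke the envelope to conclude $\wi\varphi_p\leq\wi\varphi_\eq+\delta_p$.

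The paper handles this in two stages. Proposition~\ref{P:ueBkf} gives exactly the non-uniform estimate you would actually obtain, valid on $\{\dist(\cdot,E)\geq\delta\}$. Then Proposition~\ref{P:uewiFS} extends the bound across $E$ by exploiting that $\wi\varphi_p$ is $\wi\alpha$-psh and that $E$ has only normal crossings: on small polydiscs where $E=\{z_1\cdots z_k=0\}$, one applies the maximum principle on thin polydiscs in the $z_1,\ldots,z_l$ directions to push points of $E_\delta$ out to $\partial E_\delta$, where the Proposition~\ref{P:ueBkf} bound applies, at the cost of an additional $O(\delta)+\Omega_{\wi\varphi}(O(\delta))$ error. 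Only after this step is the inequality $\wi\varphi_p\leq\wi\varphi+\text{error}$ valid on all of $\wi X$, allowing comparison with the envelope. You should incorporate this maximum-principle extension; without it the upper bound step is incomplete.
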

The proof is done by estimating the partial Bergman kernel $\wi P_p$ 
from \eqref{e:wiFS} (see \cite{Ber07}, \cite{Ber09}, \cite{CM15}, \cite{RWN17} 
for similar approaches). Let 
\[\Omega_{\wi\varphi}(\delta)=\sup\big\{|\wi\varphi(x)-\wi\varphi(y)|:\,x,y\in\wi X,\;\dist(x,y)<\delta\big\}\]
be the modulus of continuity of $\wi\varphi$. We first deal with the upper estimate for $\log\wi P_p$.

\begin{Proposition}\label{P:ueBkf}
In the setting of Theorem \ref{T:wiFSpot}, there exists a constant $C>0$ such that 
\[\frac{1}{2p}\,\log\wi P_p(x)\leq\frac{C}{p}\,(1-\log\delta)+
\delta+\Omega_{\wi\varphi}(\delta)\,,\]
for all $p\geq1$, $\delta\in(0,1)$, and $x\in\wi X$ with $\dist(x,E)\geq\delta$.
\end{Proposition}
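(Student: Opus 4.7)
The plan is to combine the variational characterization of $\wi P_p$ with the classical sub-mean-value inequality for plurisubharmonic functions, controlling carefully the degeneracy of the volume form $\pi^\star\omega^n$ along $E$. By \eqref{e:variational-carac} and the isometry \eqref{e:iso}, fix $S\in H^0_{0,(2)}(\wi X,\wi L^p)$ with $\|S\|_p=1$ and $\wi P_p(x)=|S(x)|^2_{\wi h^p}$. In a coordinate chart $U\ni x$ equipped with a local holomorphic frame $e_U$ of $\wi L$, write $S=s\cdot e_U^{\otimes p}$ and $|e_U|_{\wi h_0}=e^{-\psi_U}$ with $\psi_U$ smooth, so that $|S|^2_{\wi h^p}=|s|^2e^{-2p(\psi_U+\wi\varphi)}$ with $|s|^2$ plurisubharmonic.

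Next, since $\dist(x,E)\geq\delta$, the ball $B(x,\delta/2)$ (w.r.t. $\wi\omega$) lies in $\wi X\setminus E$ and, for $\delta\in(0,1)$ sufficiently small, inside the chart $U$. The sub-mean-value inequality applied to $|s|^2$ yields
\[
|s(x)|^2\leq\frac{C_1}{\delta^{2n}}\int_{B(x,\delta/2)}|s|^2\,\wi\omega^n.
\]
On the same ball, smoothness of $\psi_U$ gives $|\psi_U(y)-\psi_U(x)|\leq C_2\delta$, while continuity of $\wi\varphi$ gives $|\wi\varphi(y)-\wi\varphi(x)|\leq\Omega_{\wi\varphi}(\delta)$. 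Substituting $|s|^2=|S|^2_{\wi h^p}\,e^{2p(\psi_U+\wi\varphi)}$ and pulling the pointwise bounds of the weights out of the integral produces
\[
|s(x)|^2\leq\frac{C_3}{\delta^{2n}}\,e^{2p[\psi_U(x)+\wi\varphi(x)]+2p[C_2\delta+\Omega_{\wi\varphi}(\delta)]}\int_{B(x,\delta/2)}|S|^2_{\wi h^p}\,\wi\omega^n.
\]

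The main obstacle is to replace $\wi\omega^n$ by $\pi^\star\omega^n$ in the last integral, because $\pi^\star\omega^n$ degenerates along $E$. Since $\pi$ is a composition of blow-ups with smooth centers, in local coordinates the Jacobian of $\pi$ is a holomorphic function $J$ whose zero set is exactly $E$, so $\pi^\star\omega^n=|J|^2\wi\omega^n$ locally (and globally up to a smooth positive factor). Because $E$ has only normal crossings, the Łojasiewicz inequality gives a uniform bound $|J(y)|^2\geq c_0\dist(y,E)^{2N}$ for some constants $c_0,N>0$. On $B(x,\delta/2)$ every point $y$ satisfies $\dist(y,E)\geq\delta/2$, so
\[
\wi\omega^n\leq\frac{C_4}{\delta^{2N}}\,\pi^\star\omega^n\quad\text{on }B(x,\delta/2).
\]
Plugging this in and using $\|S\|_p=1$ to bound $\int_{\wi X}|S|^2_{\wi h^p}\,\pi^\star\omega^n/n!\leq1$, we obtain
\[
|s(x)|^2\leq\frac{C_5}{\delta^{2n+2N}}\,e^{2p[\psi_U(x)+\wi\varphi(x)]+2p[C_2\delta+\Omega_{\wi\varphi}(\delta)]}.
\]

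Finally, $\wi P_p(x)=|s(x)|^2e^{-2p[\psi_U(x)+\wi\varphi(x)]}$, so taking $\tfrac{1}{2p}\log$ yields
\[
\frac{1}{2p}\log\wi P_p(x)\leq\frac{\log C_5+(2n+2N)\log(1/\delta)}{2p}+C_2\delta+\Omega_{\wi\varphi}(\delta),
\]
which has the claimed form $\tfrac{C}{p}(1-\log\delta)+\delta+\Omega_{\wi\varphi}(\delta)$ after enlarging $C$ to absorb $C_2$ (or by rescaling $\delta$ at the outset). The estimate is uniform in $x$ with $\dist(x,E)\geq\delta$ and in $p\geq1$, $\delta\in(0,1)$, as required. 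The only delicate point is tracking the polynomial degeneracy of the volume form; the remaining steps are standard Bergman-kernel sub-mean-value arguments as in \cite{Ber07,CM15}.
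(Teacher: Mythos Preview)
Your proof is correct and follows essentially the same approach as the paper: sub-mean-value inequality for $|s|^2$ on a ball of radius comparable to $\delta$, Lipschitz/modulus-of-continuity control of the weight $\psi_U+\wi\varphi$, and a \L ojasiewicz-type estimate $\pi^\star\omega^n\gtrsim\dist(\cdot,E)^K\wi\omega^n$ to pass from $\wi\omega^n$ to $\pi^\star\omega^n$. The only cosmetic gap is that you assume $B(x,\delta/2)$ lies in a single chart ``for $\delta$ sufficiently small,'' whereas the statement requires all $\delta\in(0,1)$; the paper handles this by fixing at the outset a finite cover by charts of uniform radius $r_0$ and then taking $r=\delta/(2M)$ with $M$ large enough, which is exactly the standard fix you allude to.
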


\begin{proof} By compactness, there exist constants $r_0>0$, $C_1>1$ 
with the following properties: every $x\in\wi X$ has a 
contractible Stein coordinate neighborhood $U_x$ centered at $x$ such that:

$(i)$ the ball $\overline\B(x,r_0)\subset U_x$ and the Lebesgue measure 
in coordinates satisfies $d\lambda\leq C_1\frac{\wi\omega^n}{n!}$\,;

$(ii)$ $C_1^{-1}|z-y|\leq\dist(z,y)\leq C_1|z-y|$ holds for $z,y\in \overline\B(x,r_0)$;

$(iii)$ $\wi L$ has a local holomorphic frame $e_x$ on $U_x$ such that if 
$|e_x|_{\wi h_0}=e^{-\psi_x}$ then $\psi_x$ is a Lipschitz function 
with Lipschitz constant $C_1$ on $U_x$.

Moreover, there exists $K>0$ such that 
\[\pi^\star\omega^n(x)\geq C_1^{-1}\dist(x,E)^K\,\wi\omega^n(x)\,,\,\;
\forall\,x\in\wi X\,.\]
Indeed, using local embeddings of $X$ into $\C^N$ we have that 
$\omega\gtrsim i\sum_{j=1}^Ndz_j\wedge d\overline z_j$, 
so the above claim follows from the Lojasiewicz inequality. 

We let $\delta\in(0,1)$ and fix $x\in\wi X\setminus E$ with 
$\dist(x,E)\geq\delta$. Let $r<r_0$, $r<(2C_1)^{-1}\dist(x,E)$. 
If $S\in H^0_{0,(2)}(\wi X,\wi L^p)$, $\|S\|_p=1$, 
we write $S=se_x^{\otimes p}$, where $s\in\cO_{\wi X}(U_x)$. 
Using the subaverage inequality we obtain:
\[|S(x)|_{\wi h^p}^2=|s(x)|^2e^{-2p(\psi_x(x)+
\wi\varphi(x))}\leq\frac{n!C_1}{\pi^nr^{2n}}\,
e^{2p\big(\max\limits_{\B(x,r)}(\psi_x+\wi\varphi)-\psi_x(x)-\wi\varphi(x)\big)}
\int_{\B(x,r)}|S|_{\wi h^p}^2\,\frac{\wi\omega^n}{n!}\,\cdot\]
If $y\in\B(x,r)$ then $\dist(y,E)\geq\dist(x,E)-C_1r\geq\frac{1}{2}\,
\dist(x,E)\geq\frac{\delta}{2}$, hence
\[\wi\omega^n(y)\leq C_1\dist(y,E)^{-K}\,
\pi^\star\omega^n(y)\leq2^KC_1\delta^{-K}\,\pi^\star\omega^n(y)\,.\]
Therefore 
\[|S(x)|_{\wi h^p}^2\leq\frac{C_2}{r^{2n}\delta^K}\,
e^{2p\big(\max\limits_{\B(x,r)}(\psi_x+\wi\varphi)-\psi_x(x)-
\wi\varphi(x)\big)}\int_{\B(x,r)}|S|_{\wi h^p}^2\,\frac{\pi^\star\omega^n}{n!}\,,
\]
where $C_2=n!\,\pi^{-n}2^KC_1^2$. Since $S\in H^0_{0,(2)}(\wi X,\wi L^p)$ 
is arbitrary with $\|S\|_p=1$, we infer that 
\begin{align*}\frac{1}{2p}\,\log\wi P_p(x)&\leq\frac{\log C_2}{2p}-
\frac{n}{p}\,\log r-\frac{K}{2p}\log\delta+\max_{\B(x,r)}\psi_x-\psi_x(x)+
\max\limits_{\B(x,r)}\wi\varphi-\wi\varphi(x) \\
&\leq\frac{\log C_2}{2p}-\frac{n}{p}\,\log r-\frac{K}{2p}\log\delta+
C_1r+\Omega_{\wi\varphi}(C_1r)\,.
\end{align*}
Let $M>C_1$ be a constant such that $(2M)^{-1}\dist(x,E)<r_0$ for all $x\in\wi X$. 
Taking $r=\frac{\delta}{2M}$ in the last estimate we see that the conclusion 
holds with a constant $C=C(n,C_2,K,M)$.
\end{proof}

\begin{Remark}
The conclusion of Proposition \ref{P:ueBkf} holds 
in fact for the full Bergman kernel of the space 
$H^0_{(2)}(\wi X,\wi L^p,\wi h^p,\pi^\star\omega^n)$.
\end{Remark}

\begin{Proposition}\label{P:uewiFS}
In the setting of Theorem \ref{T:wiFSpot}, there exists a constant $C>0$ 
such that for all $p\geq1$ and $\delta\in(0,1)$ the following estimate holds on $\wi X$:
\[\wi\varphi_p\leq\wi\varphi_\eq+C\Big(\delta+\frac{1}{p}-
\frac{\log\delta}{p}\Big)+2\Omega_{\wi\varphi}(C\delta)\,.\]
\end{Proposition}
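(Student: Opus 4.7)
The plan is to combine the pointwise Bergman kernel estimate of Proposition \ref{P:ueBkf} with the envelope definition \eqref{e:wienveq}, extending the pointwise inequality from the set $A_\delta := \{\dist(\cdot, E) \geq \delta\}$ to all of $\wi X$ via a sub-mean-value argument carried out at two different scales. The key preliminary observation is that $\wi\varphi_p \in \mathcal{L}(\wi X, \wi\alpha, \wi\Sigma, \tau)$: the function is $\wi\alpha$-psh because $\wi\alpha + dd^c\wi\varphi_p = \frac{1}{p}\wi\gamma_p \geq 0$, and the vanishing of every section $S \in H^0_{0,(2)}(\wi X, \wi L^p)$ to order at least $t_{j,p} \geq \tau_j p$ along $\wi\Sigma_j$ forces $\nu(\wi\varphi_p, x) \geq \tau_j$ for each $x \in \wi\Sigma_j$ through the local formula \eqref{e:Bk_local} for $\wi P_p$. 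Consequently, once we prove that $\wi\varphi_p - \epsilon \leq \wi\varphi$ on $\wi X$ with $\epsilon := C(\delta + (1 - \log\delta)/p) + 2\Omega_{\wi\varphi}(C\delta)$, the function $\wi\varphi_p - \epsilon$ becomes an admissible candidate in $\mathcal{A}(\wi X, \wi\alpha, \wi\Sigma, \tau, \wi\varphi)$, and \eqref{e:wienveq} gives at once $\wi\varphi_p \leq \wi\varphi_\eq + \epsilon$.

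As a preparation I would establish a $p$-uniform bound $\sup_{\wi X} \wi\varphi_p \leq M$. Integrating $\wi\varphi_p = \wi\varphi + \frac{1}{2p}\log\wi P_p$ against the probability measure $d\mu := \pi^\star\omega^n / \int_{\wi X}\pi^\star\omega^n$, Jensen's inequality applied to $\wi P_p$ together with the identity $\int_{\wi X} \wi P_p \, \pi^\star\omega^n = (\dim H^0_{0,(2)}(\wi X, \wi L^p))\cdot n! = O(p^n)$ yields $\frac{1}{2p}\int \log\wi P_p \, d\mu = O(\log p/p)$, so $\int\wi\varphi_p \, d\mu \leq \sup_{\wi X}\wi\varphi + O(\log p/p)$. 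The standard $L^1(d\mu)$-compactness of normalized $\wi\alpha$-psh functions (i.e.\ $\sup\psi \leq \int\psi\, d\mu + C$ for $\psi \in \PSH(\wi X, \wi\alpha)$) then gives the claimed uniform upper bound on $\sup\wi\varphi_p$, independent of $p$.

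For the main step I would apply Proposition \ref{P:ueBkf} with parameter $\delta^2$ in place of $\delta$, to obtain on $A_{\delta^2}$ the bound $\wi\varphi_p(x) \leq \wi\varphi(x) + \frac{2C_0}{p}(1 - \log\delta) + \delta^2 + \Omega_{\wi\varphi}(\delta^2)$. For an arbitrary $x_0 \in \wi X$, working in a coordinate chart $U$ with a smooth local potential $\rho_U$ for $\wi\alpha$, the psh function $\wi\varphi_p - \rho_U$ satisfies the sub-mean-value inequality on the Euclidean ball $B := B(x_0, \delta) \subset U$, giving $\wi\varphi_p(x_0) \leq |B|^{-1}\int_B \wi\varphi_p \, d\lambda + O(\delta)$. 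Splitting the integral as $\int_{B \cap A_{\delta^2}} + \int_{B \cap E_{\delta^2}}$, bounding the first piece by the previous pointwise estimate (with an additional $\Omega_{\wi\varphi}(\delta)$ coming from comparing $\wi\varphi$ on $B$ with $\wi\varphi(x_0)$) and the second by $M \cdot |B \cap E_{\delta^2}|$, and using the geometric estimate
\[
\frac{|B(x_0, \delta) \cap E_{\delta^2}|}{|B(x_0, \delta)|} \leq C_E \, \delta^2
\]
that follows from $E$ being a normal crossings divisor of real codimension $2$, the contribution from $B \cap E_{\delta^2}$ is only $O(M\delta^2)$ and is absorbed in the main $O(\delta)$ term. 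The resulting inequality $\wi\varphi_p(x_0) \leq \wi\varphi(x_0) + C(\delta + (1-\log\delta)/p) + 2\Omega_{\wi\varphi}(C\delta)$ then holds on all of $\wi X$, and the envelope argument of the first paragraph closes the proof.

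The main obstacle is the extension step. The degeneracy of $\pi^\star\omega$ along $E$---reflected in the $\dist(\cdot, E)^{-K}$ factor in the proof of Proposition \ref{P:ueBkf}---obstructs any naive sub-mean argument at a single scale. The resolution is the two-scale choice just described: the Bergman kernel parameter $\delta^2$ is small enough that the tube $E_{\delta^2}$ occupies only an $O(\delta^2)$ fraction of the averaging ball of radius $\delta$, so the uniform bound $M$ on $\wi\varphi_p$ contributes only an $O(M\delta^2)$ error, while the extra factor $-\log(\delta^2) = -2\log\delta$ in the Bergman kernel error is harmlessly absorbed into the stated term $C(1 - \log\delta)/p$.
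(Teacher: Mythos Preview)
Your approach is correct and gives the stated bound, but it is genuinely different from the paper's argument, and it is worth contrasting the two.

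The paper does not introduce a uniform bound $M$ on $\wi\varphi_p$, nor a two--scale averaging. Instead, near a point $z$ with $\dist(z,E)<\delta/C_1$, the authors exploit the normal crossings structure of $E$ directly: in coordinates where $E=\{z_1\cdots z_k=0\}$, they apply the maximum principle for the psh function $v=\rho+\wi\varphi_p$ on the polydisc $\{|\zeta_j|\le\delta,\ j\le l\}$ in precisely those directions $j$ for which $|z_j|<\delta$. The distinguished boundary of this polydisc already lies at distance $\ge\delta/C_1$ from $E$, so Proposition~\ref{P:ueBkf} applies there with the \emph{same} scale $\delta$, and the passage from the distinguished boundary back to $z$ costs only $O(\delta)+\Omega_{\wi\varphi}(C\delta)$ via the Lipschitz control of $\rho$ and the modulus of continuity of $\wi\varphi$. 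This yields $\wi\varphi_p\le\wi\varphi+\text{error}$ on all of $\wi X$, and the conclusion follows from $\wi\varphi_p\in\mathcal L(\wi X,\wi\alpha,\wi\Sigma,\tau)$ and the envelope definition, exactly as in your first paragraph.

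Your route replaces this targeted maximum principle by a \emph{sub--mean--value} inequality on a ball of radius $\delta$, compensating for the bad set $E_{\delta^2}$ via the volume ratio $|B\cap E_{\delta^2}|/|B|=O(\delta^2)$ and the uniform bound $\sup\wi\varphi_p\le M$. This works, with two minor remarks: the locally psh function is $\rho_U+\wi\varphi_p$ (not $\wi\varphi_p-\rho_U$), and your compactness estimate $\sup\psi\le\int\psi\,d\mu+C$ for the degenerate measure $d\mu=\pi^\star\omega^n$ is justified because $\pi^\star\omega^n\le\wi\omega^n$ reduces it to the standard estimate for $\wi\omega^n$. The trade--off is that the paper's argument is shorter and requires no a priori bound on $\wi\varphi_p$, while your argument is a bit more robust (the volume estimate uses only that $E$ has real codimension $2$) at the cost of the extra preparatory step.
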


\begin{proof} If $p\geq1$ and $0<\delta<1$, we have by Proposition \ref{P:ueBkf} that 
\begin{equation}\label{e:Fp}
F_p(\delta):=\sup\Big\{\frac{1}{2p}\,\log\wi P_p(x):\,x\in\wi X,\;
\dist(x,E)\geq\delta\Big\}\leq\frac{C}{p}\,(1-\log\delta)+\delta+\Omega_{\wi\varphi}(\delta)\,.
\end{equation}

Let $E_\delta:=\{x\in\wi X:\,\dist(x,E)<\delta\}$ and fix $x_0\in E$. 
There exist a coordinate neighborhood $U_{x_0}$ centered at $x_0$ 
and a constant $C_1=C_{1,x_0}>1$ with the following properties: 

$(i)$ the polydisc $\overline\Delta^n(0,2)\subset U_{x_0}$ 
and $C_1^{-1}|z-y|\leq\dist(z,y)\leq C_1|z-y|$ for 
$z,y\in\overline\Delta^n(0,2)$, where $|z|:=\max_{1\leq j\leq n}|z_j|$.

$(ii)$ $\wi\alpha=dd^c\rho$ on $U_{x_0}$, where $\rho$ is a smooth function 
with Lipschitz constant $C_1$ on  $\overline\Delta^n(0,2)$ 
(with respect to the norm $|z|$ from $(i)$);

$(iii)$ $E\cap\Delta^n(0,2)=\Big(\bigcup_{j=1}^k\{z_j=0\}\Big)\cap\Delta^n(0,2)$, 
for some $1\leq k\leq n$. \\
Note that $(iii)$ can be achieved since $E$ is a divisor with only normal crossings. Hence  
\[E_{\delta/C_1}\cap\Delta^n(0,1)\subset\Big\{z\in\Delta^n(0,1):\,
\min_{1\leq j\leq k}|z_j|<\delta\Big\}\,.\]
Let $z\in E_{\delta/C_1}\cap\Delta^n(0,1)$ and without loss of generality assume that
\[|z_1|<\delta,\,\ldots,\,|z_l|<\delta,\;|z_{l+1}|\geq\delta,\,\ldots,\,|z_k|\geq\delta\,.\] 
The function  $v:=\rho+\wi\varphi_p=\rho+\wi\varphi+\frac{1}{2p}\,\log\wi P_p$ 
is psh on $U_{x_0}$. By the maximum principle applied on 
$V_l:=\{\zeta=(\zeta_1,\ldots,\zeta_l,z_{l+1},\ldots,z_n):
\,|\zeta_j|\leq\delta,\;1\leq j\leq l\}$ it follows that 
\[v(z)\leq\max_{\partial_dV_l}v\leq\max_{\partial_dV_l}\rho+
\max_{\partial_dV_l}\wi\varphi+F_p(\delta/C_1)\,,\] 
as $|\zeta_1|=\ldots=|\zeta_l|=\delta$ for $\zeta$ in the distinguished boundary 
$\partial_dV_l$, hence $\dist(\zeta,E)\geq\delta/C_1$. Since  
\[\max_{\partial_dV_l}\rho\leq\rho(z)+2C_1\delta\,,\,\;
\max_{\partial_dV_l}\wi\varphi\leq\wi\varphi(z)+\Omega_{\wi\varphi}(2C_1\delta)\,,\]
we conclude that 
\[\wi\varphi_p(z)\leq\wi\varphi(z)+2C_1\delta+\Omega_{\wi\varphi}(2C_1\delta)+
F_p(\delta/C_1)\,,\,\;\forall\,z\in E_{\delta/C_1}\cap\Delta^n(0,1)\,.\]

Using a finite cover of $E$ with neighborhoods $\Delta^n(0,1)\subset U_{x_0}$, 
$x_0\in E$, we infer by above that there exists a constant $C'>1$ 
such that for all $p\geq1$ and $0<\delta<1$ we have 
\begin{equation}\label{e:wivarphi1}
\wi\varphi_p(x)\leq\wi\varphi(x)+C'\delta+\Omega_{\wi\varphi}(C'\delta)+F_p(\delta/C')\
\end{equation}
for all $x\in E_{\delta/C'}$. Note that $\wi\varphi_p(x)=
\wi\varphi(x)+\frac{1}{2p}\,\log\wi P_p\leq\wi\varphi(x)+
F_p(\delta/C')$ for $x\in\wi X\setminus E_{\delta/C'}$, 
hence the estimate \eqref{e:wivarphi1} holds for all $x\in\wi X$, $p\geq1$ 
and $0<\delta<1$. Since $\wi P_p$ is the Bergman kernel function of the space 
$H^0_0(\wi X,\wi L^p,\wi\Sigma,\tau)$, it follows that $\wi\varphi_p$ 
has Lelong number $\geq t_{j,p}/p\geq\tau_j$ 
along $\wi\Sigma_j$, $1\leq j\leq\ell$. Therefore, by \eqref{e:wienveq} and \eqref{e:Fp},
\[\wi\varphi_p\leq\wi\varphi_\eq+C'\delta+
\Omega_{\wi\varphi}(C'\delta)+F_p(\delta/C')\leq\wi\varphi_\eq+
(C'+1)\delta+2\Omega_{\wi\varphi}(C'\delta)+
\frac{C}{p}\,\Big(1-\log\frac{\delta}{C'}\Big),\]
which concludes the proof.
\end{proof}

We next deal with the lower bound for $\log\wi P_p$ and $\wi\varphi_p$. 
The following form of the $L^2${\ke}-{\ke}estimates 
of H\"ormander/Andreotti-Vesentini 
for $\db$ is due to Demailly \cite[Th\'eor\`eme 5.1]{D82} 
(see also \cite[Theorem 5.2]{CM15}, \cite[Theorem 2.5]{CMM17}):

\begin{Theorem}\label{T:dbar}
Let $M$ be a complete K\"ahler manifold of dimension $n$, 
$\Omega$ be a (not necessarily complete) K\"ahler form on $M$, 
$\chi$ be a qpsh function on $M$, and $(F,h)$ be a singular 
Hermitian holomorphic line bundle on $M$. Assume that there exist 
constants $A,B>0$ such that 
\[\ric\Omega\geq-2\pi B\Omega\,,\,\;dd^c\chi\geq
-A\Omega\,,\,\;c_1(F,h)\geq(1+B+A/2)\Omega\,.\]
If $g\in L_{0,1}^2(M,F,{\rm loc})$ satisfies $\db g=0$ 
and $\int_M|g|^2_he^{-\chi}\,\Omega^n<+\infty$, 
then there exists $u\in L_{0,0}^2(M,F,{\rm loc})$ with 
$\db u=g$ and $\int_M|u|^2_he^{-\chi}\,\Omega^n\leq
\int_M|g|^2_he^{-\chi}\,\Omega^n$.
\end{Theorem}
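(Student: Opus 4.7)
The plan is to reduce the weighted $L^2$ estimate to the standard Hörmander--Andreotti--Vesentini theorem on complete Kähler manifolds by absorbing the weight $e^{-\chi}$ into a modified Hermitian metric on $F$. Set $h':=he^{-\chi}$, so that $|v|_{h'}^2=|v|_h^2 e^{-\chi}$ for any $F$-valued form $v$; the integrals appearing in the hypothesis and in the desired conclusion then both become $\int_M|\LargerCdot|^2_{h'}\,\Omega^n$. A local weight calculation gives
\begin{equation*}
c_1(F,h')=c_1(F,h)+\tfrac{1}{2}\,dd^c\chi\geq(1+B+A/2)\Omega-(A/2)\Omega=(1+B)\Omega,
\end{equation*}
so that $(F,h')$ has precisely the curvature positivity that the classical estimate requires in the presence of the Ricci lower bound $\ric\Omega\geq-2\pi B\Omega$.

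With this reduction, I would first treat the case where $\Omega$ itself is a complete Kähler metric on $M$. The Bochner--Kodaira--Nakano identity for $F$-valued $(0,1)$-forms, after the usual twisting by $K_M$ to produce the Ricci term, gives for every $u$ in the domain of $\db+\db^\ast$ the a priori inequality
\begin{equation*}
\bigl\langle\bigl[c_1(F,h')+\tfrac{1}{2\pi}\ric\Omega,\Lambda_\Omega\bigr]u,u\bigr\rangle_{h'}\leq\|\db u\|_{h'}^2+\|\db^\ast u\|_{h'}^2.
\end{equation*}
Under the preceding curvature bounds the commutator on the left dominates $\|u\|_{h'}^2$ pointwise on $(0,1)$-forms. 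The Andreotti--Vesentini density lemma, whose validity depends on the completeness of $\Omega$ through an exhaustion by cut-offs, extends this inequality to all $L^2$ sections in the maximal domain; a standard Hilbert space argument — writing $g\mapsto\langle g,u\rangle_{h'}$ as a bounded linear functional on $\overline{\operatorname{im}\db^\ast}$ via Cauchy--Schwarz, then invoking the Riesz representation theorem — produces $u\in L^2(M,F)$ with $\db u=g$ and $\|u\|_{h'}^2\leq\|g\|_{h'}^2$, which is the required estimate.

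For the general case where $\Omega$ is not assumed complete, the plan is to invoke Demailly's standard exhaustion trick: choose a complete Kähler metric $\hat\omega$ on $M$ (which exists by hypothesis on $M$) and set $\Omega_\varepsilon:=\Omega+\varepsilon\hat\omega$, which is a complete Kähler form for each $\varepsilon>0$. Applying the complete-case result with $\Omega_\varepsilon$ in place of $\Omega$ yields solutions $u_\varepsilon$ of $\db u_\varepsilon=g$ satisfying $\int_M|u_\varepsilon|_h^2 e^{-\chi}\Omega_\varepsilon^n\leq(1+o(1))\int_M|g|_h^2 e^{-\chi}\Omega_\varepsilon^n$; a weakly convergent subsequence $u_\varepsilon\rightharpoonup u$ in $L^2_\loc$, together with monotone convergence $\Omega_\varepsilon^n\searrow\Omega^n$ and weak lower semicontinuity of the norm, then delivers the theorem. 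The main obstacle is precisely this last step: the curvature hypotheses are stated relative to $\Omega$, whereas the complete-case estimate must be applied relative to $\Omega_\varepsilon$, and both the commutator $[c_1(F,h'),\Lambda_{\Omega_\varepsilon}]$ and the term $\ric\Omega_\varepsilon$ on $(0,1)$-forms depend on $\varepsilon$. The standard workaround is to perturb $\chi$ by small $\varepsilon$-dependent corrections that compensate for the discrepancy and vanish in the limit, so that the positivity required by Bochner--Kodaira survives at every finite step while the conclusion is stable under $\varepsilon\to 0$.
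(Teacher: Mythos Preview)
The paper does not supply its own proof of this theorem; it is quoted as a known result due to Demailly \cite[Th\'eor\`eme 5.1]{D82}, with further references to \cite[Theorem 5.2]{CM15} and \cite[Theorem 2.5]{CMM17}. Your outline is essentially the standard proof from those sources: absorb $e^{-\chi}$ into the metric via $h'=he^{-\chi}$ (your curvature computation $c_1(F,h')\geq(1+B)\Omega$ is correct under the paper's conventions), identify $(0,1)$-forms in $F$ with $(n,1)$-forms in $F\otimes K_M^{-1}$ to bring in $\ric\Omega$, apply Bochner--Kodaira--Nakano on a complete metric, and then descend from $\Omega_\varepsilon=\Omega+\varepsilon\hat\omega$ to $\Omega$.

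One point to sharpen: the passage $\varepsilon\to0$ is not effected by ``perturbing $\chi$ by $\varepsilon$-dependent corrections''. What makes the limit work in Demailly's argument is a monotonicity specific to $(n,q)$-forms: if $\Omega_1\leq\Omega_2$ as Hermitian forms, then for any $(n,q)$-form $g$ one has $|g|^2_{\Omega_2}\,\Omega_2^n\leq|g|^2_{\Omega_1}\,\Omega_1^n$ pointwise, and the curvature term $\langle[\Theta,\Lambda_{\Omega}]^{-1}g,g\rangle_\Omega\,\Omega^n$ enjoys the same monotonicity. Hence the $L^2$ estimate obtained for each $\Omega_\varepsilon$ is automatically bounded by the one for $\Omega$, and a weak limit of the $u_\varepsilon$ inherits the desired bound. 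If you attempt the limit directly at the level of $(0,1)$-forms without passing through the $(n,1)$ identification, this monotonicity is lost and the argument does not close; that is why the twist by $K_M$ is not merely cosmetic here.
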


Since $(L,\Sigma,\tau)$ is big and $\wi X$ is K\"ahler, 
it follows from Theorem \ref{T:main1} that the class 
$\{\wi\theta\}=\{\wi\theta\}_\ddbar$ is big, where $\wi\theta$ 
is defined in \eqref{e:wibts}. By \cite{D92} and \cite[Theorem 3.17]{Bo04}, 
there exists a $\wi\theta$-psh function $\eta$ with analytic singularities on $\wi X$ such that  
\[\{\eta=-\infty\}=\NAmp\big(\{\wi\theta\}\big)\,,\,\;\eta\leq
-1\,,\,\text{ and }\, \wi\theta+
dd^c\eta\geq\varepsilon_0\wi\omega\geq\varepsilon_0\pi^\star\omega\]
hold on $\wi X$, for some constant $\varepsilon_0>0$. 

\begin{Proposition}\label{P:lewiFS}
In the setting of Theorem \ref{T:wiFSpot}, 
there exist a constant $C>0$ and $p_0\in\N$ such that for all 
$p\geq p_0$ the following estimate holds on $\wi X\setminus Z$:
\[\wi\varphi_p\geq\wi\varphi_\eq+\frac{C}{p}\,\eta+
\frac{1}{p}\,\sum_{j=1}^\ell\log\sigma_j\,.\]
\end{Proposition}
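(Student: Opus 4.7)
The plan is to establish the pointwise lower bound via a peak-section construction at each $x_0\in\wi X\setminus Z$, combined with the variational characterization \eqref{e:variational-carac} of $\wi P_p$. The peak section will be produced by H\"ormander's $L^2$-estimates (Theorem \ref{T:dbar}) applied on the complete K\"ahler manifold $M:=\wi X\setminus Z$, equipped with a standard auxiliary complete K\"ahler form (cf.\ e.g.\ \cite[Lemma 2.1]{CMM17}). The critical ingredient is the singular Hermitian metric on $\wi L^p$
\[
H_p\;:=\;\wi h_0^{\otimes p}\cdot e^{-2p\wi\varphi_\req}\cdot\prod_{j=1}^\ell|s_{\wi\Sigma_j}|_{g_j}^{-2 t_{j,p}}\cdot e^{-2a\eta},
\]
where $\wi\varphi_\req=\wi\varphi_\eq-\sum_j\tau_j\log\sigma_j$ is the reduced equilibrium envelope given by Proposition \ref{P:envelopes}(iii), and $a>0$ is a large constant to be fixed. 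The three factors in $H_p$ play complementary roles: $|s_{\wi\Sigma_j}|_{g_j}^{-2t_{j,p}}$ will force the eventual $L^2$ section to vanish to order $\geq t_{j,p}$ along $\wi\Sigma_j$; $e^{-2p\wi\varphi_\req}$ concentrates the mass near $x_0$ with the right exponential rate; and the Demailly regularizer $e^{-2a\eta}$ restores the strict positivity of the curvature.

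Indeed, on $M$ a direct computation gives
\[
c_1(\wi L^p,H_p)\big|_M\;=\;p\bigl(\wi\theta+dd^c\wi\varphi_\req\bigr)+a\bigl(\wi\theta+dd^c\eta\bigr)-a\wi\theta+\sum_{j=1}^\ell(p\tau_j-t_{j,p})\beta_j,
\]
which exceeds $\wi\omega$ once $a$ is chosen large enough (independently of $p$), since $|p\tau_j-t_{j,p}|\leq 1$, $\wi\theta+dd^c\wi\varphi_\req\geq0$, $\wi\theta+dd^c\eta\geq\varepsilon_0\wi\omega$, and $|\wi\theta|,|\beta_j|\lesssim\wi\omega$. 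I would then fix a local frame $e_{x_0}$ of $\wi L$ on a coordinate ball $\B(x_0,r_0)\subset M$, choose a smooth cut-off $\chi$ with $\chi\equiv1$ near $x_0$ and $\supp\chi\subset\B(x_0,r_0)$, set $f:=\chi\,e_{x_0}^{\otimes p}$, and apply Theorem \ref{T:dbar} to solve $\db u=\db f$ with the extra peak weight $2(n+1)\log|z-x_0|$ localised near $x_0$. The section $S:=f-u$ is then holomorphic on $M$, has $S(x_0)=e_{x_0}^{\otimes p}(x_0)$ (because the peak weight forces $u(x_0)=0$), vanishes to order $\geq t_{j,p}$ along each $\wi\Sigma_j$, and extends to an element of $H^0_0(\wi X,\wi L^p)$ by Riemann-type $L^2$ extension across $Z$.

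Because $\wi\varphi_\req,\eta,\sigma_j$ are continuous on $\B(x_0,r_0)\subset M$, the H\"ormander bound combined with the elementary comparison $|S|_{\wi h^p}^2\leq|S|_{H_p}^2$ on $\wi X$ (which follows from $\wi\varphi_\req\leq\wi\varphi-\sum_j\tau_j\log\sigma_j$, $\eta\leq -1$, $\sigma_j<1$, and $t_{j,p}\geq p\tau_j$) yields
\[
\|S\|_p^{\,2}\;\leq\; C_0\,e^{-2p\psi_{x_0}(x_0)-2p\wi\varphi_\req(x_0)}\prod_{j=1}^\ell\sigma_j(x_0)^{-2t_{j,p}}\,e^{-2a\eta(x_0)},
\]
where $|e_{x_0}|_{\wi h_0}=e^{-\psi_{x_0}}$. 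Since $|S(x_0)|_{\wi h^p}^2=e^{-2p\psi_{x_0}(x_0)-2p\wi\varphi(x_0)}$, \eqref{e:variational-carac} and the definition of $\wi\varphi_p$ give, after taking $\tfrac{1}{2p}\log$,
\[
\wi\varphi_p(x_0)\;\geq\;\wi\varphi_\req(x_0)+\frac{1}{p}\sum_{j=1}^\ell t_{j,p}\log\sigma_j(x_0)+\frac{a}{p}\eta(x_0)-\frac{C_1}{p}.
\]
The stated inequality then follows from $t_{j,p}\leq p\tau_j+1$ and $\log\sigma_j<0$ (which gives $\tfrac{1}{p}\sum_j t_{j,p}\log\sigma_j\geq\sum_j\tau_j\log\sigma_j+\tfrac{1}{p}\sum_j\log\sigma_j$), the identity $\wi\varphi_\eq=\wi\varphi_\req+\sum_j\tau_j\log\sigma_j$, and the absorption of the constant $-C_1/p$ into the $\eta$-term (using $\eta\leq -1$, by enlarging $a$ to the final constant $C$).

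The step I expect to be the main obstacle is the construction phase: choosing the auxiliary complete K\"ahler form on $M$ so that Theorem \ref{T:dbar} applies with Ricci and curvature bounds uniform in $p$, and confirming rigorously that the $L^2$ section produced by H\"ormander's theorem extends across the codimension-one components $\wi\Sigma_j$ of $Z$ with the full vanishing order $t_{j,p}$ preserved. Both are standard in this area but require a careful combination of local Riemann-type $L^2$-extension with the quantitative blow-up of the weights of $H_p$.
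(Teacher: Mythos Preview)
Your overall strategy is the same as the paper's---produce a peak section via Theorem \ref{T:dbar} with an auxiliary singular metric built from $\wi\varphi_\req$ (equivalently $\wi\varphi_\eq$), the canonical sections $s_{\wi\Sigma_j}$, and the regularizer $\eta$. However, your curvature estimate has a genuine gap. From your displayed formula
\[
c_1(\wi L^p,H_p)\big|_M\;=\;p\bigl(\wi\theta+dd^c\wi\varphi_\req\bigr)+a\bigl(\wi\theta+dd^c\eta\bigr)-a\wi\theta+\sum_{j=1}^\ell(p\tau_j-t_{j,p})\beta_j,
\]
you claim the right side exceeds $\wi\omega$ for $a$ large because $|\wi\theta|\lesssim\wi\omega$. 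But the term $-a\wi\theta$ scales linearly in $a$: if $\wi\theta\leq C\wi\omega$ is the best bound, you only get $-a\wi\theta\geq -aC\wi\omega$, so the $a$-dependent contribution is $a(\varepsilon_0-C)\wi\omega$, which tends to $-\infty$ when $C>\varepsilon_0$. This is the generic situation, since $\varepsilon_0$ can be arbitrarily small while $\|\wi\theta\|$ need not be. Nothing else in your decomposition compensates: the $p$-term is merely $\geq0$ and the $\beta_j$-terms are uniformly bounded.

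The fix, which is exactly what the paper does, is to \emph{borrow} the $\wi\theta$ needed for $\eta$ from the main term rather than add and subtract it. Replace $p\wi\varphi_\req$ by $(p-p_0)\wi\varphi_\req$ (equivalently, take $\psi_p=(p-p_0)\wi\varphi_\eq+p_0\eta+\sum_j(p_0\tau_j+1)\log\sigma_j$). Splitting $p\wi\alpha=(p-p_0)\wi\alpha+p_0\wi\alpha$ and using $\wi\alpha=\wi\theta+\sum_j\tau_j\beta_j$ then gives
\[
c_1(\wi L^p,H_p)=(p-p_0)(\wi\alpha+dd^c\wi\varphi_\eq)+p_0(\wi\theta+dd^c\eta)+\sum_j(p_0\tau_j+1)[\wi\Sigma_j]-\sum_j\beta_j\geq(p_0\varepsilon_0-C_1)\wi\omega
\]
on all of $\wi X$, with no competing $-p_0\wi\theta$ term. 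Once this is repaired, your argument goes through. Note also that with the correct metric the curvature inequality holds on the compact K\"ahler manifold $\wi X$ itself (the currents $[\wi\Sigma_j]$ contribute positively), so you can apply Theorem \ref{T:dbar} directly on $(\wi X,\wi\omega)$ and avoid the auxiliary complete form on $M$ and the extension step across $Z$ altogether; the required vanishing $\ord(S,\wi\Sigma_j)\geq t_{j,p}$ is then automatic from finiteness of the $H_p$-norm.
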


\begin{proof} We consider the Bergman space 
$H^0_{(2)}(\wi X,\wi L^p,H_p,\wi\omega^n)$ of $L^2$\ke-{\ke}integrable sections 
of $\wi L^p$ with respect to the volume form $\wi\omega^n$ on $\wi X$ 
and the metric $H_p:=\wi h_0^pe^{-2\psi_p}$ on $\wi L^p$, where 
\[\psi_p=(p-p_0)\wi\varphi_\eq+p_0\eta+\sum_{j=1}^\ell(p_0\tau_j+1)\log\sigma_j\,,\]
and $p_0\in\N$ will be specified later. Since $\sigma_j<1$, $\eta<0$, 
and $\wi\varphi_\eq\leq\wi\varphi$, we have for $p\geq p_0$ that $\psi_p\leq(p-p_0)\wi\varphi$. 
Moreover, 
\begin{align*}
c_1(\wi L^p,H_p)&=p\wi\alpha+(p-p_0)dd^c\wi\varphi_\eq+
p_0dd^c\eta+\sum_{j=1}^\ell(p_0\tau_j+1)([\wi\Sigma_j]-\beta_j) \\
&=(p-p_0)(\wi\alpha+dd^c\wi\varphi_\eq)+p_0(\wi\theta+dd^c\eta)+
\sum_{j=1}^\ell(p_0\tau_j+1)[\wi\Sigma_j]-\sum_{j=1}^\ell\beta_j \\
&\geq(p_0\varepsilon_0-C_1)\wi\omega\,,
\end{align*}
where $C_1>0$ is a constant such that 
$\sum_{j=1}^\ell\beta_j\leq C_1\wi\omega$. 
If $p_0$ is chosen large enough (i.{\ke}e.\ $p_0\varepsilon_0-C_1\geq1+B+A/2$, 
where $A,B$ are as in Theorem \ref{T:dbar}) and $p\geq p_0$, we use 
Theorem \ref{T:dbar} for $\wi L^p$, with suitable weights $\chi$ as in the proof of 
\cite[Theorem 5.1]{CM15}, to show that there exists a constant $C_2>0$ 
such that for all $p\geq p_0$ and $x\in\wi X\setminus Z$ there exists 
$S_x\in H^0_{(2)}(\wi X,\wi L^p,H_p,\wi\omega^n)$ with $S_x(x)\neq0$ and 
\[\|S_x\|^2_{H_p,\wi\omega^n}\leq C_2|S_x(x)|^2_{H_p}\,.\]
Note that $H_p=\wi h^pe^{2F_p}$, where 
$F_p=p\wi\varphi-\psi_p\geq p_0\wi\varphi$. 
Let $a:=\min_{\wi X}\wi\varphi$. Then $F_p\geq ap_0$ 
and since 
$\wi\omega\geq\pi^\star\omega$ we obtain
\[\|S_x\|^2_{H_p,\wi\omega^n}=
\int_{\wi X}|S_x|^2_{\wi h^p}e^{2F_p}\,\wi\omega^n\geq 
e^{2ap_0}\|S_x\|^2_{\wi h^p,\pi^\star\omega^n}\,.\]
By \eqref{e:eqreq}, we have that $\wi\varphi_\eq=
\wi\varphi_\req+\sum_{j=1}^\ell\tau_j\log\sigma_j$ on $\wi X$, 
where $\wi\varphi_\req=\wi\varphi_{\req,\wi\Sigma,\tau}$ 
is the reduced equilibrium envelope of $(\wi\alpha,\wi\Sigma,\tau,\wi\varphi)$ 
defined in \eqref{e:envreq}. 
Hence 
\[\psi_p=(p-p_0)\wi\varphi_\req+p_0\eta+\sum_{j=1}^\ell(p\tau_j+1)\log\sigma_j\]
is a qpsh function with Lelong numbers $\geq p\tau_j+1$ 
along $\wi\Sigma_j$. Since $\|S_x\|^2_{H_p,\wi\omega^n}<+\infty$, this shows that 
$\ord(S_x,\wi\Sigma_j)\geq\lfloor\tau_jp\rfloor+1\geq t_{j,p}$, 
so $S_x\in H^0_0(\wi X,\wi L^p,\wi\Sigma,\tau)$. 
Moreover, since
\[e^{2ap_0}\|S_x\|^2_{\wi h^p,\pi^\star\omega^n}\leq 
C_2|S_x(x)|^2_{\wi h^p}e^{2F_p(x)}\,,\]
we infer that 
\[\wi P_p(x)\geq C_2^{-1}e^{2ap_0-2F_p(x)}.\]
Note that on $\wi X$ we have, for some constant $C_3>0$,  
\[F_p=p\wi\varphi-\psi_p=p(\wi\varphi-\wi\varphi_\eq)+
p_0\wi\varphi_\req-p_0\eta-\sum_{j=1}^\ell\log\sigma_j\leq 
p(\wi\varphi-\wi\varphi_\eq)+C_3-p_0\eta-
\sum_{j=1}^\ell\log\sigma_j.
\]
It follows that there exists a constant $C_4>0$ 
such that for all $p\geq p_0$,
\begin{equation}\label{e:BK2}
\frac{1}{2p}\,\log\wi P_p\geq\wi\varphi_\eq-
\wi\varphi-\frac{C_4}{p}+\frac{p_0}{p}\,\eta+
\frac{1}{p}\,\sum_{j=1}^\ell\log\sigma_j
\end{equation}
holds on $X\setminus Z$. Using \eqref{e:wiFSpot} 
and since $\eta\leq-1$, \eqref{e:BK2} implies that 
\[\wi\varphi_p\geq\wi\varphi_\eq+
\frac{C_4+p_0}{p}\,\eta+\frac{1}{p}\,\sum_{j=1}^\ell\log\sigma_j\]
holds on $X\setminus Z$ for all $p\geq p_0$.
\end{proof}
\begin{proof}[Proof of Theorem \ref{T:wiFSpot}] Since $\eta$, $\log\sigma_j$ 
are qpsh functions on $\wi X$ with analytic singularities along 
$\NAmp\big(\{\wi\theta\}\big)$, resp.\ $\wi\Sigma_j$, 
there exist constants $N_j,M_j>0$, $0\leq j\leq\ell$, such that 
\[\eta(x)\geq-N_0\big|\log\dist\big(x,\NAmp\big(\{\wi\theta\}\big)\big)\big|-
M_0\,,\,\;\log\sigma_j(x)\geq-N_j\big|\log\dist\big(x,\wi\Sigma_j\big)\big|-M_j\,,\]
for all $x\in\wi X$. Together with Proposition \ref{P:lewiFS}, 
these imply that there exist a constant $C_1>0$ and $p_0\in\N$ such that if $p\geq p_0$ then 
\begin{equation}\label{e:lewiFS1}
\wi\varphi_p\geq\wi\varphi_\eq-\frac{C_1}{p}\,\Big(\big|\log\dist\big(x,Z\big)\big|+1\Big)
\end{equation}
holds on $\wi X$. 

Since $\varphi$ is continuous we have that $\wi\varphi$ 
is continuous. Let $\varepsilon>0$ and fix $\delta=
\delta(\varepsilon) $ such that $C\delta+
2\Omega_{\wi\varphi}(C\delta)<\frac{\varepsilon}{2}\,$, 
where $C$ is the constant from Proposition \ref{P:uewiFS}. 
There exists $p_\varepsilon$ such that 
$\frac{C}{p}\,(1-\log\delta)<\frac{\varepsilon}{2}$ for 
$p\geq p_\varepsilon$. Hence by Proposition \ref{P:uewiFS},
\begin{equation}\label{e:lewiFS2}
\wi\varphi_p\leq\wi\varphi_\eq+C\Big(\delta+\frac{1}{p}-
\frac{\log\delta}{p}\Big)+2\Omega_{\wi\varphi}(C\delta)\leq
\wi\varphi_\eq+\varepsilon 
\end{equation}
holds on $\wi X$ for $p\geq p_\varepsilon$. Note that 
$\log\dist\big(\LargerCdot,Z\big)\in L^1(\wi X,\wi\omega^n)$ 
(see e.{\ke}g.\ \cite[Lemma 5.2]{CMN16} and its proof). 
Assertion $(i)$ of Theorem \ref{T:wiFSpot} now follows from 
\eqref{e:lewiFS1} and \eqref{e:lewiFS2}.

Assume next that the function  $\varphi$ is H\"older continuous on $X$. 
Then $\wi\varphi$ is H\"older continuous on $\wi X$, so $\Omega_{\wi\varphi}(\delta)\leq 
C_2\delta^\nu$ for some constant $C_2>0$, where $\nu$ 
is the H\"older exponent of $\wi\varphi$. 
Taking $\delta=p^{-1/\nu}$ in Proposition \ref{P:uewiFS} 
we see that there exists a constant $C_3>0$ such that 
\begin{equation}\label{e:lewiFS3}
\wi\varphi_p\leq\wi\varphi_\eq+C\Big(p^{-1/\nu}+
p^{-1}+\frac{\log p}{\nu p}\Big)+2C_2\,\frac{C^\nu}{p}\leq
\wi\varphi_\eq+C_3\,\frac{\log p}{p}
\end{equation}
holds on $\wi X$ for $p\geq2$. Assertion $(ii)$ follows 
immediately from \eqref{e:lewiFS1} and \eqref{e:lewiFS3}.
\end{proof}

Using Theorem \ref{T:wiFSpot} we can prove the convergence 
of the Fubini-Study currents and their global potentials on $X$.

\begin{proof}[Proof of Theorem \ref{T:FSpot}] 
Let $(\wi X,\pi,\wi\Sigma)$ be a divisorization of $(X,\Sigma)$ 
as in Definition \ref{D:divisorization}. Then $\pi:\wi X\setminus E\to X\setminus Y$ 
is a biholomorphism, where $Y\supset X_\sing$ is an analytic subset of $X$ with 
$\dim Y\leq n-2$ and $E=\pi^{-1}(Y)$. 
By Theorem \ref{T:wiFSpot},  
$\wi\varphi_p=\varphi_p\circ\pi\to\wi\varphi_\eq$ in $L^1(\wi X,\wi\omega^n)$, 
where $\wi\omega$ is a K\"ahler form on $\wi X$ such that $\wi\omega\geq\pi^\star\omega$. 
Recall that the functions $\wi\varphi_p,\wi\varphi_\eq$ are $\wi\alpha$-psh on $\wi X$, 
where $\wi\alpha=\pi^\star\alpha$. We define 
$\varphi_\eq:=\wi\varphi_\eq\circ\pi^{-1}$ on $X\setminus Y\subset X_\reg$. Then 
\begin{equation}\label{e:Hspeed}
\int_{X\setminus Y}|\varphi_p-\varphi_\eq|\,\omega^n=
\int_{\wi X\setminus E}|\wi\varphi_p-\wi\varphi_\eq|\,\pi^\star\omega^n\leq
\int_{\wi X}|\wi\varphi_p-\wi\varphi_\eq|\,\wi\omega^n\to 0 \text{ as } p\to\infty\,.
\end{equation}
Since $\pi^\star(\alpha+dd^c\varphi_\eq)=\wi\alpha+dd^c\wi\varphi_\eq\geq0$, 
it follows that $\varphi_\eq$ is $\alpha$-psh on $X\setminus Y$. 

It remains to show that $\varphi_\eq$ extends to an $\alpha$-psh function on $X$. 
Let $x_0\in Y$ and $U_{x_0}$ be a neighborhood of $x_0$ in $X$ 
on which $L$ has a local holomorphic frame $e_{x_0}$, 
and let $|e_{x_0}|_{h_0}=e^{-\rho}$, where $\rho$ is a smooth function on $U_{x_0}$. 
Then $dd^c\rho=\alpha$. Since $\rho\circ\pi+\wi\varphi_\eq$ is psh on 
$\pi^{-1}(U_{x_0})\setminus E$, we infer that $v:=\rho+\varphi_\eq$ is psh on 
$U_{x_0}\setminus Y$. Hence $v$ extends to a psh function on $U_{x_0}$ since $X$ 
is normal and $\dim Y\leq n-2$ \cite[Satz 4]{GR56}. Therefore $\varphi_\eq$ 
extends to an $\alpha$-psh function on $X$.

The second assertion of Theorem \ref{T:FSpot} 
follows at once from \eqref{e:Hspeed} and Theorem \ref{T:wiFSpot} \emph{(ii)}, 
since the function $\log\dist\big(\LargerCdot,Z\big)\in L^1(\wi X,\wi\omega^n)$.
\end{proof}
We record here an immediate consequence of
Theorems \ref{T:FSpot} and \ref{T:main2} for the case when $\Sigma=\emptyset$.
\begin{Corollary}\label{cor:FSpot}
Let $(X,\omega)$ be an irreducible compact normal K\"ahler space
of dimension $n$ and let $L$ be a big line bundle on $X$ endowed with
a continuous Hermitian metric $h$. We denote by $P_p$
the Bergman kernel function of $H^0(X,L^p)$ relative to $h^p$
and $\omega^n/n!$ and by $\gamma_p$ the corresponding Fubini-Study current
\eqref{e:FSpot}. Let $h_0$ be a smooth metric on $L$ and denote by
$\varphi_p$ the global Fubini-Study potential relative to $h_0$ \eqref{e:FSpot}. 
Then the following assertions hold:

(a) There exists an 
$\alpha$-psh function $\varphi_\eq$ on $X$ such that as $p\to\infty$ we have
$\varphi_p\to\varphi_\eq$ in $L^1(X,\omega^n)$, 
$\frac{1}{p}\,\gamma_p\to T_\eq:=
\alpha+dd^c\varphi_\eq$ and 
$\displaystyle\frac{1}{p}\,[s_p=0]\to T_\eq$\,, 
in the weak sense of currents on $X$, 
for $\sigma_\infty$-{\ke}a.{\ke}e.\ $\{s_p\}_{p\geq1}\in\X_\infty$\,.

(b) If, in addition, $h$ is H\"older continuous then there exist a constant 
$C>0$ and $p_0\in\N$ such that 
$\|\varphi_p-\varphi_\eq\|_{L^1(X,\omega^n)}\leq C\,(\log p)/p$
for all $p\geq p_0$ and the large deviation principle of 
Theorem \ref{T:main2} (ii) holds.
\end{Corollary}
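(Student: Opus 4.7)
The plan is to derive Corollary \ref{cor:FSpot} as the specialization of Theorems \ref{T:FSpot} and \ref{T:main2} to the case of the empty tuple $\Sigma = \emptyset$ (i.e.\ $\ell = 0$). The main task is simply to verify that all hypotheses of those theorems are satisfied in this degenerate case; the conclusions then follow directly.

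First I would check hypotheses (A)--(D): (A) holds because $X$ is an irreducible compact normal complex space of dimension $n$, (B) is the given line bundle, and (C), (D) are vacuously satisfied for the empty tuple. The existence of a K\"ahler form $\omega$ on $X$ is explicitly assumed. With $\Sigma = \emptyset$, the definition \eqref{e:H00} gives $H^0_0(X, L^p) = H^0(X, L^p)$ since there are no vanishing constraints, so the bigness of $(L, \emptyset, \emptyset)$ in the sense of Definition \ref{D:big} is exactly the statement $\limsup_{p\to\infty} p^{-n}\dim H^0(X, L^p) > 0$, which coincides with our hypothesis that $L$ is a big line bundle on $X$.

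With the hypotheses verified, Theorem \ref{T:FSpot} immediately yields an $\alpha$-psh function $\varphi_\eq$ on $X$ with $\varphi_p \to \varphi_\eq$ in $L^1(X, \omega^n)$ and $\frac{1}{p}\gamma_p \to T_\eq := \alpha + dd^c\varphi_\eq$ weakly on $X$. Theorem \ref{T:main2}(i), applicable because $h$ is continuous, then provides the equidistribution $\frac{1}{p}[s_p = 0] \to T_\eq$ for $\sigma_\infty$-a.e.\ sequences $\{s_p\}_{p\geq 1} \in \X_\infty$, completing part (a). For part (b), the sharper estimate \eqref{e:eFSpot} yields $\|\varphi_p - \varphi_\eq\|_{L^1(X, \omega^n)} \leq C(\log p)/p$ under the H\"older hypothesis, and Theorem \ref{T:main2}(ii) directly gives the stated large deviation principle.

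The only bookkeeping point---not a genuine obstacle---is to confirm that the divisorization $(\wi X, \pi, \wi\Sigma)$ used inside the proofs of Theorems \ref{T:FSpot} and \ref{T:main2} is well defined when $\wi\Sigma$ is empty. Here $\pi: \wi X \to X$ is merely a desingularization of $X$ provided by Theorem \ref{T:H1}; the class $\{\wi\theta\} = \{\wi\alpha\} = \{\pi^\star\alpha\}$ is big on $\wi X$ by Theorem \ref{T:main1} (with $\ell = 0$, this is the classical Ji-Shiffman criterion for $\pi^\star L$); and the envelope \eqref{e:wienveq} reduces to the largest $\wi\alpha$-psh function on $\wi X$ dominated by $\wi\varphi = \varphi \circ \pi$. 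With these identifications, the kernel estimates of Propositions \ref{P:ueBkf}--\ref{P:lewiFS} and the push-down argument at the end of the proof of Theorem \ref{T:FSpot} transfer verbatim.
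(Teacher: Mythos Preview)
Your proposal is correct and matches the paper's own approach: the paper explicitly introduces Corollary \ref{cor:FSpot} as ``an immediate consequence of Theorems \ref{T:FSpot} and \ref{T:main2} for the case when $\Sigma=\emptyset$'' and does not provide a separate proof. Your additional verification that the hypotheses (A)--(D) hold vacuously, that bigness of $(L,\emptyset,\emptyset)$ reduces to bigness of $L$, and that the divisorization degenerates to a desingularization with $\wi\theta=\wi\alpha$, is more than the paper itself spells out but is entirely in line with how the corollary is meant to be read.
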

Note that if $X$ is smooth, then 
$\varphi_\eq=\sup\{\psi\in\PSH(X,\alpha):\,\psi\leq\varphi\text{ on }X \}$
is the usual
upper envelope.
\section{Proof of the equisdistribution Theorem \ref{T:main2}}\label{S:Tmain2}

Let $h,h_0,\varphi$ be as in \eqref{e:varphi}. 
Let $P_p,\gamma_p$ be the Bergman kernel function and 
Fubini-Study current of the space $H^0_{0,(2)}(X,L^p)$, and let $\varphi_p$ 
be the global Fubini-Study potential of $\gamma_p$ (see \eqref{e:FSpot}). 
We start by proving that zero divisors of random sections distribute 
like the Fubini-Study currents.

\begin{Theorem}\label{T:speed}
Let $X,L,\Sigma,\tau$ verify assumptions (A)-(D), let $h$ be a bounded singular 
Hermitian metric on $L$, and assume that $(L,\Sigma,\tau)$ is big and there exists a 
K\"ahler form $\omega$ on $X$. Then there exists a constant $c>0$ 
with the following property: For any sequence of positive numbers 
$\{\lambda_p\}_{p\geq1}$ such that 
\[\liminf_{p\to\infty} \frac{\lambda_p}{\log p}>(1+n)c\,,\]
there exist subsets $E_p\subset\X_p$ such that 

(a) $\sigma_p(E_{p})\leq cp^n\exp(-\lambda_p/c)$ holds for all $p$ sufficiently large;

(b) if $s_p\in\X_p\setminus E_p$ we have 
\[\Big|\frac{1}{p}\,\langle[s_p=0]-\gamma_p,\phi\rangle\Big|\leq
\frac{c\lambda_p}{p}\,\| \phi\|_{\cC^2}\,,\] 
for any $(n-1,n-1)$-form $\phi$ of class $\cC^2$ on $X$. 

In particular, the last estimate holds for 
$\sigma_\infty$-{\ke}a.{\ke}e.\ $\{s_p\}_{p\geq1}\in\X_\infty$ 
provided that $p$ is large enough.
\end{Theorem}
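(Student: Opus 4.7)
The plan is to reduce the equidistribution estimate to a tail bound for the $L^1$ norm of the normalized log-modulus
$$u_p(x) := \log\frac{|s_p(x)|^2_{h^p}}{P_p(x)}\,,$$
where $s_p\in H^0_{0,(2)}(X,L^p)$ is a unit-$L^2$-norm representative of a projective point in $\X_p$. Applying the Poincar\'e--Lelong formula and the identity \eqref{e:Bk_FS} with $V=H^0_{0,(2)}(X,L^p)$, I obtain
$$\frac{1}{p}\,[s_p=0]-\frac{1}{p}\,\gamma_p = \frac{1}{2p}\,dd^c u_p\,,$$
valid off the base locus of $H^0_{0,(2)}(X,L^p)$, which is a proper analytic set thanks to bigness and Theorem \ref{T:main1}. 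By the extremal characterization \eqref{e:variational-carac} one has $u_p\le 0$ pointwise. Integrating by parts against an $(n-1,n-1)$-form $\phi$ of class $\cC^2$ and estimating the $(n,n)$-form $dd^c\phi$ against $\|\phi\|_{\cC^2}\,\omega^n$ yields
$$\Big|\tfrac{1}{p}\big\langle[s_p=0]-\gamma_p,\phi\big\rangle\Big|\le \frac{c'}{p}\,\|u_p\|_{L^1(X,\omega^n)}\,\|\phi\|_{\cC^2}\,.$$
Thus assertion (b) holds on the complement of $E_p:=\{s\in\X_p:\|u_p\|_{L^1}>c\lambda_p\}$, and it remains to estimate $\sigma_p(E_p)$.

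The second step is a pointwise anticoncentration estimate on the sphere. Fix an orthonormal basis $S_1,\ldots,S_{N_p}$ of $H^0_{0,(2)}(X,L^p)$ and represent $s\in\X_p$ as the projective class of $a=(a_j)$ in the complex unit sphere of $\C^{N_p}$. At $x\notin\Bs(H^0_{0,(2)}(X,L^p))$, a direct computation in a local frame gives
$$\frac{|s(x)|^2_{h^p}}{P_p(x)}=|\langle a,v_p(x)\rangle|^2$$
for some unit vector $v_p(x)\in\C^{N_p}$, so $u_p(x)=\log|\langle a,v_p(x)\rangle|^2$. Since $\sigma_p$ is the image of the rotation-invariant probability measure on the sphere, $|\langle a,v_p(x)\rangle|^2$ is $\mathrm{Beta}(1,N_p-1)$-distributed with density $(N_p-1)(1-t)^{N_p-2}$. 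For any fixed $\alpha\in(0,1)$ this yields, uniformly in $x$ and $p$,
$$\int_{\X_p}e^{-\alpha u_p(x)}\,d\sigma_p = (N_p-1)\,B(1-\alpha,N_p-1) \le C_\alpha\,N_p^\alpha \le C_\alpha'\,p^{n\alpha}\,,$$
where $N_p=O(p^n)$ follows e.g.\ from Lemma \ref{L:Siegel} applied on a divisorization.

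Third, I pass from the pointwise estimate to one on $\|u_p\|_{L^1(X,\omega^n)}$ via Jensen and Fubini. Writing $V:=\int_X\omega^n$ and using Jensen's inequality for the convex function $t\mapsto e^{\alpha t}$ together with Fubini,
$$\int_{\X_p}\exp\!\Big(\frac{\alpha}{V}\|u_p\|_{L^1(X,\omega^n)}\Big)\,d\sigma_p
\le \frac{1}{V}\int_X\!\int_{\X_p} e^{-\alpha u_p(x)}\,d\sigma_p(s)\,\omega^n(x) \le C_\alpha'\,p^{n\alpha}\,.$$
Markov's inequality then produces
$$\sigma_p\{s\colon\|u_p\|_{L^1}>c\lambda_p\} \le C_\alpha'\,p^{n\alpha}\exp(-\alpha c\lambda_p/V)\,.$$
Fixing $\alpha=\tfrac{1}{2}$ and taking $c$ sufficiently large in terms of $c',V,C_\alpha'$, the exceptional set $E_p$ defined above satisfies both (a) and (b). The final assertion is a direct Borel--Cantelli argument: the hypothesis $\liminf\lambda_p/\log p>(1+n)c$ forces $\sigma_p(E_p)=O(p^{-1-\varepsilon})$ for some $\varepsilon>0$, which is summable, so $\sigma_\infty$-a.e.\ sequence eventually avoids $E_p$.

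The main obstacle I anticipate is not the probabilistic argument itself but ensuring \emph{uniformity} of the moment estimate as $x$ varies: this works because the sphere estimate depends only on the dimension $N_p$ and is entirely blind to the vanishing constraints cut out by $\Sigma$ and $\tau$ — those constraints enter only through $P_p$, $N_p$, and the choice of orthonormal basis. A small technical point is to verify that $\Bs(H^0_{0,(2)}(X,L^p))$ is a proper analytic set of $\omega^n$-measure zero, which is true because bigness of $(L,\Sigma,\tau)$ (via Theorem \ref{T:main1}) produces sections not vanishing identically on any open set.
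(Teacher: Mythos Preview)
Your proof is correct, but the route is genuinely different from the paper's. The paper invokes the Dinh--Sibony equidistribution theorem for meromorphic transforms \cite[Theorem~4.1]{DS06}: it passes to a smooth divisorization $(\wi X,\pi,\wi\Sigma)$, views the Kodaira maps $\Phi_p:\wi X\dashrightarrow\P H^0_{0,(2)}(\wi X,\wi L^p)$ as meromorphic transforms, computes their intermediate degrees, and then reads off the exceptional sets and the estimate directly from \cite[Lemma~4.2(d)]{DS06} together with the projective--space bounds of \cite[Lemma~4.6]{CMN16}; the singular case is obtained by pushing forward via the isometry \eqref{e:iso}. Your argument, by contrast, is self-contained: Poincar\'e--Lelong reduces the pairing to $\|u_p\|_{L^1}$, rotation invariance gives the exact $\mathrm{Beta}(1,N_p-1)$ law for $|s_p(x)|^2_{h^p}/P_p(x)$ uniformly in $x$, and then a Jensen/Fubini/Markov chain yields the exponential tail. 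This is essentially the ``variance/moment'' approach that underlies \cite{SZ99,BCM}; it has the virtue of being elementary and of making transparent why the vanishing constraints along $\Sigma$ play no role beyond fixing $N_p$ and $P_p$. The Dinh--Sibony approach, on the other hand, is more robust (it applies to higher--codimension cycles and more general probability measures) and avoids any direct computation with the singular metric or the base locus on $X$. One small point: your constant bookkeeping at the end (``taking $c$ sufficiently large'') needs a slight rearrangement, since with $E_p=\{\|u_p\|_{L^1}>c\lambda_p\}$ the bound in (b) comes out as $\frac{c'\cdot c\lambda_p}{p}\|\phi\|_{\cC^2}$ rather than $\frac{c\lambda_p}{p}\|\phi\|_{\cC^2}$; define instead $E_p=\{\|u_p\|_{L^1}>\lambda_p\}$ and then choose a single $c$ dominating $c'$, $C'_\alpha$, and $2V$.
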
 

\begin{proof} We apply the Dinh-Sibony equidistribution theorem 
for meromorphic transforms \cite[Theorem 4.1]{DS06}, 
as in the proof of \cite[Theorem 4.2]{CMN16}. 
Our present situation is easier as we only deal with currents of bidegree $(1,1)$. 
We fix a divisorization $(\wi X,\pi,\wi\Sigma)$ of $(X,\Sigma)$ as in 
Definition \ref{D:divisorization}, and let $\wi\omega$ be a K\"ahler form on $\wi X$. 
Let $\wi L,\wi h$ be as in \eqref{e:wivarphi}, and $\wi P_p,\wi\gamma_p$ 
be the Bergman kernel function and Fubini-Study current of the space 
$H^0_{0,(2)}(\wi X,\wi L^p)$ (see \eqref{e:wiFS}). 
By \eqref{e:iso}, $H^0_{0,(2)}(\wi X,\wi L^p),H^0_{0,(2)}(X,L^p)$ are isometric. 
We proceed in two steps. 

{\em Step 1.} We prove here that Theorem \ref{T:speed} 
holds for the spaces $H^0_{0,(2)}(\wi X,\wi L^p)$. Set 
\[\wi\X_p:=\P H^0_{0,(2)}(\wi X,\wi L^p),\;\sigma_p=
\omega_\FS^{d_p},\;(\wi\X_\infty,\sigma_\infty):= 
\prod_{p=1}^\infty (\wi\X_p,\sigma_p),\text{ where } d_p=\dim\wi\X_p=\dim\X_p.\] 
We proceed as in \cite[Section 4]{CMN16} and consider 
the Kodaira maps as meromorphic transforms of codimension $n-1$, 
$\Phi_p:\wi X\dashrightarrow\P H^0_{0,(2)}(\wi X,\wi L^p)$, with graph 
\[\Gamma_p=\big\{(x,\tilde s)\in\wi X\times\P H^0_{0,(2)}(\wi X,\wi L^p):\,
\tilde s(x)=0\big\}\,.\]
If $\delta_{\wi s_p}$ is the Dirac mass at 
$\tilde s_p\in\P H^0_{0,(2)}(\wi X,\wi L^p)$ then $\Phi_p^\star(\delta_{\tilde s_p})$ 
is well defined for generic $\tilde s_p$ and $\Phi_p^\star(\delta_{\tilde s_p})=[\tilde s_p=0]$. 
Moreover, by \cite[Lemma 4.5]{CMN16} (see also \cite{SZ99}) we have
\[\langle\Phi_p^\star(\sigma_p),\phi\rangle=
\int_{\wi\X_p}\langle[\tilde s_p=0],\phi\rangle\,d\sigma_p(\tilde s_p)=
\langle\wi\gamma_p,\phi\rangle\,,\]
where $\phi$ is a smooth $(n-1,n-1)$ form on $\wi X$. 
The intermediate degrees of $\Phi_p$ are
\[d(\Phi_p):=\int_{\wi X}\Phi_p^\star(\sigma_p)\wedge\wi\omega^{n-1}=
p\int_{\wi X}c_1(\wi L,\wi h)\wedge\wi\omega^{n-1},\;
\delta(\Phi_p):= \int_{\wi X}\Phi_p^\star(\omega_\FS^{d_p-1})\wedge\wi\omega^n=
\int_{\wi X}\wi\omega^n.\]
For $\varepsilon>0$ let
\[\wi E_p(\varepsilon):=\bigcup_{\|\phi\|_{\cC^2}\leq 1}\big\lbrace
\tilde s\in 
\wi\X_p:\,\big|\big\langle[\tilde s=0]-\wi\gamma_p\,,\phi\big\rangle\big|\geq 
d(\Phi_p)\varepsilon \big\rbrace\,.\]
By \cite[Lemma 4.2 (d)]{DS06}, we infer, using the estimates of 
\cite[Lemma 4.6]{CMN16} for a projective space, that there exist constants 
$C_1,a_1,M_1>0$ such that 
\[\sigma_p(\wi E_p(\varepsilon))\leq 
C_1d_pe^{-a_1\varepsilon p+M_1\log d_p}\,,\,\;\forall\,\varepsilon>0,\,p\geq1\,.\]
By Siegel's lemma $d_p=O(p^n)$, so $\sigma_p(\wi E_p(\varepsilon))\leq 
C_2p^ne^{-a_1\varepsilon p+C_2\log p}$, for some constant $C_2>0$. 
We let $\varepsilon_p:=\lambda_p/p$ and $\wi E_p:=\wi E_p(\varepsilon_p)$. 
If $\liminf_{p\to\infty} \frac{\lambda_p}{\log p}>2C_2/a_1$, 
it follows that $\sigma_p(\wi E_p)\leq C_2p^ne^{-a_1\lambda_p/2}$ 
for all $p$ sufficiently large. Set 
\[c=\max\left(\frac{2C_2}{a_1(1+n)}\,,\frac{2}{a_1}\,,
C_2,\int_{\wi X}c_1(\wi L,\wi h)\wedge\wi\omega^{n-1}\right).\]
If $\liminf_{p\to\infty} \frac{\lambda_p}{\log p}>(1+n)c$ then 
$\sigma_p(\wi E_p)\leq cp^ne^{-\lambda_p/c}$ for all $p$ large enough. Moreover,  
\[\Big|\frac{1}{p}\,\langle[\tilde s_p=0]-\gamma_p,\phi\rangle\Big|\leq
\frac{d(\Phi_p)}{p}\,\frac{\lambda_p}{p}\,
\| \phi\|_{\cC^2}\leq\frac{c\lambda_p}{p}\,\| \phi\|_{\cC^2}\] 
holds for any $\tilde s_p\in\wi\X_p\setminus\wi E_p$ 
and any $(n-1,n-1)$-form $\phi$ of class $\cC^2$ on $\wi X$. 

{\em Step 2.} We complete now the proof of the theorem when $X$ is singular. 
Let $c>0$ be the constant constructed in Step 1, let $\{\lambda_p\}_{p\geq1}$ 
be a sequence of positive numbers such that 
$\liminf_{p\to\infty} \frac{\lambda_p}{\log p}>(1+n)c$, and let 
$\wi E_p\in\wi\X_p$ be the corresponding sets constructed in Step 1. Recall by \eqref{e:iso} 
that the map $S\in H^0_{0,(2)}(X,L^p)\to\pi^\star S\in H^0_{0,(2)}(\wi X,\wi L^p)$ 
is an isometry and let 
$E_p=F_p(\wi E_p)$, where $F_p:\wi\X_p\to\X_p$ is the isomorphism induced by this isometry. 
Then $\sigma_p(E_p)\leq cp^ne^{-\lambda_p/c}$ for all $p$ sufficiently large. 
Note that $\pi_\star\wi\gamma_p=\gamma_p$ and, if $s\in\X_p$, 
then $\pi_\star[\tilde s=0]=[s=0]$, 
where $s=F_p(\tilde s)$. Hence if $s_p\in\X_p\setminus E_p$ and 
$\phi$ is any $(n-1,n-1)$-form of class $\cC^2$ on $X$, we have 
\[\Big|\frac{1}{p}\,\langle[s_p=0]-\gamma_p,\phi\rangle\Big|=
\Big|\frac{1}{p}\,\langle[\tilde s_p=0]-\wi\gamma_p,\pi^\star\phi\rangle\Big|
\leq\frac{c\lambda_p}{p}\,\|\pi^\star\phi\|_{\cC^2}
\leq\frac{cc_1\lambda_p}{p}\,\|\phi\|_{\cC^2}\,,\]
for some constant $c_1>0$. The last assertion of Theorem \ref{T:speed} 
follows as in \cite[Theorem 4.2]{CMN16}. 
\end{proof}

\begin{proof}[Proof of Theorem \ref{T:main2}] 

If $h$ is continuous then by Theorem \ref{T:FSpot}, 
$\frac{1}{p}\,\gamma_p\to T_\eq$ weakly on $X$, as $p\to\infty$. 
Moreover, by Theorem \ref{T:speed}, 
$\frac{1}{p}\,\big([s_p=0]-\gamma_p\big)\to 0$ weakly on $X$, 
for $\sigma_\infty$-{\ke}a.{\ke}e.\ $\{s_p\}_{p\geq1}\in\X_\infty$. 
This proves assertion $(i)$.

Assume now that $h$ is H\"older continuous. There exists a constant $C'>0$ such that 
\[-C'\|\phi\|_{\cC^2}\,\omega^n\leq dd^c\phi\leq C'\|\phi\|_{\cC^2}\,\omega^n\,,\]
for every real valued $(n-1,n-1)$-form $\phi$ of class $\cC^2$ on $X$. 
Hence the total variation of $dd^c\phi$ satisfies  
$|dd^c\phi|\leq C'\|\phi\|_{\cC^2}\,\omega^n$ 
(see e.{\ke}g.\ \cite{BCM}). Using Theorem \ref{T:FSpot} we infer that 
\[\Big|\Big\langle\frac{1}{p}\,\gamma_p-T_\eq\,,\phi\Big\rangle\Big|=
\Big|\int_X(\varphi_p-\varphi_\eq)\,dd^c\phi\Big|\leq 
C'\|\phi\|_{\cC^2}\int_X|\varphi_p-\varphi_\eq|\,\omega^n
\leq CC'\|\phi\|_{\cC^2}\,\frac{\log p}{p}\,,\]
for all $p\geq p_0$ and $\phi$ as above. 
Assertion $(ii)$ follows combining this and Theorem \ref{T:speed}. 
\end{proof}

\end{document}